	\theoremstyle{definition}
	\newtheorem{thm}{Theorem}[section]
	\newtheorem{lem}[thm]{Lemma}
	\newtheorem{prop}[thm]{Proposition}
	\newtheorem{cor}[thm]{Corollary}
	\newtheorem{rmk}{Remark}[section]
	\newcommand{\R}{\mathbb{R}}  
	\newcommand{\Z}{\mathbb{Z}}
	\newcommand{\N}{\mathbb{N}}
	\newcommand{\T}{\mathbb{T}}
	\newcommand{\RR}{\mathcal{R}}      
	\newcommand{\Om}{\Omega}
	\newcommand{\q}{\quad}
	\newcommand{\p}{\partial}
	\newcommand{\nab}{\nabla}
	\newcommand{\lap}{\Delta}
	\newcommand{\cnab}{\overline{\nab}}
	\newcommand{\dx}{\,\mathrm{d}x}
	\newcommand{\dt}{\,\mathrm{d}t}
	\newcommand{\dtau}{\,\mathrm{d}\tau}
	\newcommand{\W}{\mathcal{W}}
	\newcommand{\wg}{\widetilde{\mathcal{G}}}
	\newcommand{\io}{\int_{\Omega}}
	\newcommand{\ddt}{\frac{\mathrm{d}}{\mathrm{d}t}}
	\numberwithin{equation}{section}
	\newcommand{\eps}{\varepsilon}
	\newcommand{\BF}{\bar{F}}
	\newcommand{\fjp}{(F_j+\bar{F}_j)\cdot\nab}
	\newcommand{\flp}{(F_l+\bar{F}_l)\cdot\nab}
	\newcommand{\fj}{(F_j+\bar{F}_j)}
	\newcommand{\fl}{(F_l+\bar{F}_l)}
\begin{document}
		\title{\bf Low Mach Number Limit of Non-isentropic Inviscid Elastodynamics with General Initial Data}
		\date{\today}
		\author{{\sc Jiawei WANG}\thanks{Hua Loo-Keng Center for Mathematical Sciences, Academy of Mathematics and Systems Science, Chinese Academy of Sciences, Beijing, P.R. China.
				Email: \texttt{wangjiawei@amss.ac.cn}}\,\,\,\,\,\, {\sc Junyan ZHANG}\thanks{Department of Mathematics, National University of Singapore, Singapore. 
				Email: \texttt{zhangjy@nus.edu.sg}} }
		\maketitle
		
		\begin{abstract}
			We prove the incompressible limit of non-isentropic inviscid elastodynamic equations with {\it general initial data} in 3D half-space. The deformation tensor is assumed to satisfy the neo-Hookean linear elasticity and degenerates in the normal direction on the solid wall. The uniform estimates in Mach number are established based on two important observations. First, the entropy has enhanced regularity in the direction of each column of the deformation tensor, which exactly helps us avoid the loss of derivatives caused by the simultaneous appearance of elasticity and entropy in vorticity analysis. Second, a special structure of the wave equation of the pressure together with elliptic estimates helps us reduce the normal derivatives in the control of divergence and pressure. The strong convergence of solutions in time is obtained by proving local energy decay of the wave equation and using the technique of microlocal defect measure.
			
		\end{abstract}
		
		\noindent \textbf{Keywords}: Neo-Hookean elastodynamics, Incompressible limit, General initial data, Non-isentropic fluids, Initial-boundary-value problem.
		
		\noindent \textbf{MSC(2020) codes}: 35L65, 35Q35, 74B10, 76M45.
		\setcounter{tocdepth}{1}
		\tableofcontents
		
		\section{Introduction}
		
		We consider 3D compressible inviscid elastodynamic equations in three spatial dimensions
		\begin{equation}
			\begin{cases}
				D_t\rho+\rho(\nabla\cdot u)=0~~~&\text{in}~[0,T]\times\Om, \\
				\rho D_t u +\eps^{-2} \nabla p= \nabla\cdot(\rho\dot{\boldsymbol{F}}\dot{\boldsymbol{F}}^{\mathrm{T}})~~~& \text{in}~[0,T]\times\Om, \\
				D_t \dot{\boldsymbol{F}}=\nabla u  \dot{\boldsymbol{F}}~~~&\text{in}~[0,T]\times\Om, \\
				\nabla\cdot (\rho \dot{\boldsymbol{F}})=0~~~&\text{in}~[0,T]\times\Om,\\
				D_t S=0~~~&\text{in}~[0,T]\times\Om,
			\end{cases}\label{CElasto}
		\end{equation}
		Here $\Om=\mathbb{R}^3_-:=\{x\in \mathbb{R}^3: x_3<0\}$ is the half-space with boundary $\Sigma:=\{x_3=0\}$. $\nabla:=(\p_{x_1},\p_{x_2},\p_{x_3})^{\mathrm{T}}$ is the standard spatial derivative. $N=(0,0,1)^{\mathrm{T}}$ is the unit outward normal of $\Sigma$. $D_t:=\p_t+u\cdot\nabla$ is the material derivative. The fluid velocity, the deformation tensor, the fluid density, the fluid pressure and the entropy are denoted by $u=(u_1,u_2,u_3)^{\mathrm{T}}$, $\dot{\boldsymbol{F}}=(\dot{F}_{ij})_{3\times3}$, $\rho$, $p$ and $S$ respectively. $\rho\dot{\boldsymbol{F}}\dot{\boldsymbol{F}}^{\mathrm{T}}$ is the Cauchy-Green stress tensor in the case of compressible neo-Hookean linear elasticity. The fourth equation of \eqref{CElasto} will not make the system be over-determined because we only require it holds for the initial data and it automatically propagates to any time (cf. Trakhinin \cite[Prop. 2.1]{Trakhinin2016elastic}). Note that the last equation of \eqref{CElasto} is derived from the equation of total energy and Gibbs relation. The Mach number $\eps$, defined as the ratio of characteristic fluid velocity to the sound speed, is a dimensionless parameter that measures the compressibility of the fluid. The inviscid elastodynamic system describes the motion of a neo-Hookean elastic medium corresponding to the elastic energy $W(\dot{\boldsymbol{F}})=\frac12|\dot{\boldsymbol{F}}|^2$. It also arises as the inviscid limit of the compressible visco-elastodyanmics of the Oldroyd type \cite{Dafermos10}. 
		
		We assume the fluid density $\rho=\rho(p,S)>0$ to be a given smooth function of $p$ and $S$ which satisfies 
		\begin{align}
			\rho\geq \bar{\rho_0}>0,~~~\frac{\p \rho}{\p p}> 0, \q \text{in}\,\, \bar{\Om}.
			\label{EoS}
		\end{align} for some constant $\bar{\rho_0}>0$. For instance, we have ideal fluids $\rho(p,S)=p^{1/\gamma}e^{-S/\gamma}$ with $\gamma>1$ for a polytropic gas. These two conditions also guarantee the hyperbolicity of system \eqref{CElasto}.
		
		The initial and boundary conditions of system \eqref{CElasto} are
		\begin{align}
			(p,u,\dot{\boldsymbol{F}},S)|_{t=0}=(p_0,u_0, \dot{\boldsymbol{F}}_0,S_0) ~~~& \text{in }[0,T]\times\Om,\\
			u\cdot N=0,\quad \dot{\boldsymbol{F}}^{\mathrm{T}}\cdot N=0 ~~~& \text{on }[0,T]\times\Sigma,
		\end{align} 
		where the boundary condition for $u$ is the slip boundary condition. The condition for $\dot{\boldsymbol{F}}$ not an imposed boundary condition. Instead, this condition is also a constraint for initial data that propagates within the lifespan of the solution and we refer to \cite{Trakhinin2016elastic} for the proof. The degeneracy is related to the formation of vortex sheets in elastodynamics and we refer to \cite[Remark 2.1]{CHWWY2} or \cite[Section 186]{VSphysical} for further interpretations.
		
		Define $\dot{F}_j=(\dot{F}_{1j},\dot{F}_{2j},\dot{F}_{3j})^{\mathrm{T}}$ to be the $j$-th column of $\dot{\boldsymbol{F}}$, Then we have
		\begin{equation}
			\begin{cases}
				D_t\rho+\rho(\nabla\cdot u)=0~~~&\text{in}~[0,T]\times\Om, \\
				\rho D_t u +\eps^{-2} \nabla p= \rho\sum\limits_{j=1}^{3}\dot{F}_j\cdot\nabla \dot{F}_j~~~& \text{in}~[0,T]\times\Om, \\
				D_t \dot{F}_j=\dot{F}_j\cdot\nabla u~~~&\text{in}~[0,T]\times\Om, \\
				\nabla\cdot (\rho \dot{F}_j)=0~~~&\text{in}~[0,T]\times\Om,\\
				D_t S=0~~~&\text{in}~[0,T]\times\Om.
			\end{cases}\label{CElasto1}
		\end{equation}
		The initial and boundary conditions of system \eqref{CElasto1} are
		\begin{align}
			\label{data}	    (p,u,\dot{F}_j,S)|_{t=0}=(p_0,u_0, \dot{F}_{j,0},S_0) ~~~& \text{in }[0,T]\times\Om,\\
			\label{bdry cond}			u_3=0,\quad \dot{F}_{3j}=0 ~~~& \text{on }[0,T]\times\Sigma.
		\end{align}

		To make the initial-boundary value problem \eqref{CElasto1}-\eqref{bdry cond} solvable, we need to require the initial data to satisfy the compatibility conditions up to certain order. For $m\in\N$, we define the $m$-th order compatibility conditions to be
		\begin{equation}\label{comp cond}
			\p_t^j u_3|_{t=0}=0\text{ on }\Sigma,~~0\leq j\leq m.
		\end{equation}

		Let $a:=\frac{1}{\rho}\frac{\p\rho}{\p p}$. Since $\frac{\p \rho}{\p p}>0$ implies $a(p,S)>0$, using $D_t S=0$, the first equation of \eqref{CElasto1} is equivalent to 
		\begin{equation} \label{continuity eq f}
			a D_t p  +\nab\cdot u=0. 
		\end{equation}
		Thus the compressible elastodynamic system is now reformulated as follows
		\begin{equation}\label{CElasto2}
			\begin{cases}
				a D_t p  +\nab\cdot u=0~~~&\text{in}~[0,T]\times\Om, \\
				\rho D_t u +\eps^{-2}\nabla p=\rho\sum\limits_{j=1}^{3}\dot{F}_j\cdot\nabla \dot{F}_j~~~& \text{in}~[0,T]\times\Om, \\
				D_t \dot{F}_j=\dot{F}_j\cdot\nabla u~~~&\text{in}~[0,T]\times\Om, \\
				\nabla\cdot (\rho \dot{F}_j)=0~~~&\text{in}~[0,T]\times\Om,\\
				D_t S=0~~~&\text{in}~[0,T]\times\Om,\\
				a=a(p,S)>0,\quad\rho=\rho(p, S)>0~~~&\text{in}~[0,T]\times\bar{\Om}.\\
				u_3=\dot{F}_{3j}=0 ~~~& \text{on}~[0,T]\times\Sigma,\\
				(p,u,\dot{F}_j,S)|_{t=0}=(p_0,u_0, \dot{F}_{j,0}, S_0)~~~& \text{on}~\{t=0\}\times\Om.
			\end{cases}
		\end{equation}

		When considering the incompressible limit, that is, when $\eps>0$ approaches to 0, it is more convenient to symmetrize the compressible elastodynamic system by using the transformation 
		\[
		p=1+\eps q,\quad \dot{F}_j=F_j + \BF_j,\quad \BF_j=(\BF_{1j},\BF_{2j},0)^{\mathrm{T}},
		\]
		where $\BF_{ij}$ are constants, and $F_j$ are functions which represent the perturbation around the constant states. This step is necessary when $\Om$ is unbounded because we want each variable to belong to $L^2(\Om)$. We then derive the following dimensionless non-isentropic inviscid elastodynamic system.
		\begin{equation}\label{CElasto3}
			\begin{cases}
				a D_t q  + \eps^{-1} \nab\cdot u=0~~~&\text{in}~[0,T]\times\Om, \\
				\rho D_t u +\eps^{-1}\nabla q=\rho\sum\limits_{j=1}^{3}(F_j+\BF_j)\cdot\nabla F_j~~~& \text{in}~[0,T]\times\Om, \\
				D_t F_j=(F_j+\BF_j)\cdot\nabla u~~~&\text{in}~[0,T]\times\Om, \\
				\nabla\cdot (\rho (F_j+\BF_j))=0~~~&\text{in}~[0,T]\times\Om,\\
				D_t S=0~~~&\text{in}~[0,T]\times\Om,\\
				a=a(\eps q,S)>0,\quad \rho=\rho(\eps q,S)>0~~~&\text{in}~[0,T]\times\bar{\Om}.\\
				u_3=F_{3j}=0 ~~~& \text{on}~[0,T]\times\Sigma,\\
				(q,u,F_j,S)|_{t=0}=(q_0,u_0, F_{j,0}, S_0)~~~& \text{on}~\{t=0\}\times\Om.\\
			\end{cases}
		\end{equation}

		\subsection{An overview of previous results}\label{sect history}

		The incompressible limit of compressible inviscid fluids is considered to be a type of singular limit for hyperbolic system: the pressure for compressible fluids is a variable of hyperbolic system whereas the pressure for incompressible fluids is a Lagrangian multiplier and the equation of state is no longer valid. Early works about compressible Euler equations can be dated back to Klainerman-Majda \cite{Klainerman1981limit, Klainerman1982limit} when the domain is the whole space $\R^d$ or the periodic domain $\T^d$,  Schochet \cite{Schochet1986limit} when the domain is bounded, and Isozaki \cite{Isozaki1987limit} when considering an exterior domain. See also Sideris \cite{Sideris1991} and Secchi \cite{Secchi2002} for the long-time incompressible limit of Euler equations in $\R^3$ and $\R\times\R_-$ respectively. We also refer to Luo \cite{Luo2018CWW} and Luo and the second author \cite{LuoZhang2022CWWST} for the incompressible limit of free-surface Euler equations with or without surface tension. The abovementioned papers consider the case of ``well-prepared" initial data, that is, $(\nab\cdot u_0,\nab q_0)=O(\eps^k)$ for $k\geq 1$, which means the compressible initial data is exactly a slight perturbation of the given incompressible initial data. In this case, the first-order time derivatives of each variable is bounded uniformly in $\eps$, and so uniform estimates immediately lead to the strong convergence to the incompressible system. 
		
		However, for general initial data (also called ``ill-prepared" initial data), that is, $(\nab\cdot u_0,\nab q_0)=O(1)$, the compressible initial data is no longer the small perturbation of incompressible initial data but also contains a highly oscillatory part. In this case, the first-order time derivatives of velocity and pressure are of $O(\eps^{-1})$ size and one has to filter the highly oscillatory part (actually acoustic waves) when proving the strong convergence. We refer to \cite{Ukai1986limit, Isozaki1987limit, Asano1987limit, Schochet1994limit, Iguchi1997limit, Secchi2000, Cheng2012} for the incompressible limit of isentropic Euler equations with general data and \cite{Metivier2001limit, Alazard2005limit} for the incompressible limit of non-isentropic Euler equations with general initial data in $\R^d$ or the half-space or the exterior of a bounded domain.
		
		For the incompressible limit of inviscid elastodynamic system, when the domain is the whole space or the periodic domain, we refer to \cite{Schochet1985elasticity, Sideris2005, JuWangXu2022, Wang2022elasto, JuWang2023}. In particular, when the domain has a boundary, the existing literature only considers the isentropic case when the deformation tensor satisfies the degenerate constraint $\dot{\boldsymbol{F}}^\mathrm{T}\cdot N=0$ on $\p\Om$, and we refer to Liu-Xu \cite{LiuXu2021} for the case of well-prepared initial data and Ju, the first author and Xu \cite{JuWangXu2022} for the case of general initial data. Recently, the second author \cite{Zhang2021elasto} proved local well-posedness and incompressible limit of the free-boundary problem to the isentropic compressible elastodynamic equations. 
		
		However, the study of incompressible limit for non-isentropic fluid is more subtle. In fact, when the data is well-prepared, the frameworks for the isentropic case are still valid up to some technical modifications. For example, the authors \cite{WZ2023CMHDlimit} proved the incompressible limit of non-isentropic MHD with well-prepared data by combining the framework of \cite{LuoZhang2022CWWST}  and some observations for MHD equations, in which the weights of Mach number should be carefully chosen according to the number of tangential derivatives such that the energy estimates are uniform in Mach number. Unfortunately, when the initial data is ill-prepared, the simultaneous appearance of compressibility, entropy and the coupled quantities (such as the elasticity, the magnetic fields for MHD equations, etc) causes several essential difficulties that do not appear in Euler equations with general initial data or the coupled system (such as elastodynamics, MHD, etc) with well-prepared initial data. In particular, there exhibits a simultaneous loss of weights of Mach number and derivatives in the vorticity analysis for non-isentropic elastodynamics (also for MHD) with general initial data.	
		
		The aim of this paper is to establish the incompressible limit problem of non-isentropic elastodynamics inside a solid wall with general initial data. There are mainly two important observations which will be discussed in Section \ref{sect overview intro}. It should also be noted that the smallness of Mach number $\eps$ is required to close the uniform estimates in many previous works about the incompressible limits in a domain with boundary, such as \cite{Ukai1986limit, Asano1987limit, Iguchi1997limit, Secchi2000, Schochet1986limit, Metivier2001limit, Alazard2005limit, LiuXu2021, JuWangXu2022}, while \textit{our method no longer relies on the smallness of Mach number}.

		\subsection{The main theorems}		
		We denote the interior Sobolev norm to be 
		$$\|f\|_{s}:= \|f(t,\cdot)\|_{H^s(\Omega)},\quad \|f\|^2_{s,\eps}:=\sum_{k=0}^{s}\|(\eps\p_t)^k f\|_{s-k}^2$$
		for any function $f(t,x)\text{ on }[0,T]\times\Omega$ and denote the boundary Sobolev norm to be $|f|_{s}:= |f(t,\cdot)|_{H^s(\Sigma)}$ for any function $f(t,x)\text{ on }[0,T]\times\Sigma$.

		The local well-poseness of \eqref{CElasto3} in $H^3(\Om)$ for each fixed $\eps>0$ can be proved by using the classical theory for symmetric hyperbolic systems with characteristic boundary, such as \cite{Schochet1986limit} and \cite[Appendix A]{WZ2023CMHDlimit}. First, we establish a local-in-time estimate, uniform in Mach number $\eps$, without assuming $\eps$ to be small.
		\begin{thm}[\textbf{Uniform-in-$\eps$ estimate}]\label{main thm, ill data}
			Let $\eps>0$ be given. Let $(q_0, u_0,F_{j,0},S_0)\in H^3(\Om)\times H^3(\Om)\times H^3(\Om)\times H^4(\Om)$ be the initial data of \eqref{CElasto3} satisfying the compatibility conditions \eqref{comp cond} up to 2-th order and
			\begin{equation}
				E(0)\le M
			\end{equation}
			for some $M>0$ independent of $\eps$. Then there exist $T>0$ and $\eps_0\in (0,1)$ depending only on $M$, such that for all $\eps\in(0,\eps_0)$, \eqref{CElasto3} admits a unique solution $(q(t),u(t),F_j(t),S(t))$ that satisfies the energy estimate
			\begin{equation}\label{energy estimate}
				\sup_{t\in[0,T]}E(t) \le P(E(0)),
			\end{equation}
			where $P(\cdots)$ is a generic polynomial in its arguments, and the energy $E(t)$ is defined to be
			\begin{equation}
				\begin{aligned}\label{energy intro}
					E(t)=\left\|(q,u,S)\right\|_{3,\eps}^2+\sum_{j=1}^3\left\|\left(F_j,(F_j+\BF_j)\cdot\nabla S\right)\right\|_{3,\eps}^2.
				\end{aligned}
			\end{equation}
		\end{thm}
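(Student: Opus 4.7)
The plan is to close the Mach-weighted energy $E(t)$ by combining tangential energy estimates on \eqref{CElasto3} with a div-curl decomposition, where the missing normal derivatives of $q$ and of $\nabla\cdot u$ are recovered through an acoustic wave equation coupled to elliptic estimates. Concretely, for every multi-index with $a_0+a_1+a_2\le 3$ I would apply the tangential operator $T^\alpha:=(\eps\p_t)^{a_0}\p_{x_1}^{a_1}\p_{x_2}^{a_2}$ to each equation of \eqref{CElasto3} and perform standard $L^2$ energy estimates with the symmetrizing weights $a$ for the continuity equation, $\rho$ for the momentum and elasticity equations, and $1$ for the entropy equation. The singular $\eps^{-1}$ pressure-velocity exchange cancels by symmetry; the elasticity cross-terms integrate by parts cleanly because $\nabla\cdot(\rho(F_j+\BF_j))=0$ and $F_{3j}|_\Sigma=0$; and all characteristic boundary contributions vanish thanks to $u_3|_\Sigma=0$.

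\textbf{Enhanced directional regularity of the entropy.} The transport $D_tS=0$ yields $\|S\|_{3,\eps}$ by standard commutator estimates, but the first key observation is that each $(F_j+\BF_j)\cdot\nabla S$ is \emph{also} transported by $D_t$. Indeed, using $D_tF_j=(F_j+\BF_j)\cdot\nabla u$ and the identity $[D_t,\nabla_k]=-(\nabla_k u^i)\nabla_i$, a direct computation gives
\begin{equation*}
D_t\bigl((F_j+\BF_j)\cdot\nabla S\bigr)=0,
\end{equation*}
so $\|(F_j+\BF_j)\cdot\nabla S\|_{3,\eps}$ is controlled by the same transport argument. This extra directional regularity is decisive for the vorticity estimate: taking $\mathrm{curl}$ of the momentum equation produces a baroclinic source of the form $\eps^{-1}\nabla S\times\nabla q$, which would otherwise simultaneously cost one derivative and one power of $\eps$; however, it can be rearranged, modulo controlled remainders, in terms of derivatives of the well-bounded quantities $(F_j+\BF_j)\cdot\nabla S$ together with the elasticity vorticity source coming from $\mathrm{curl}\bigl((F_j+\BF_j)\cdot\nabla F_j\bigr)$.

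\textbf{Acoustic wave equation and normal derivatives.} Having closed the tangential and curl estimates for $u$ and $F_j$, the remaining normal derivatives of $q$ (and hence of $\nabla\cdot u$ via the continuity equation) are extracted from the pressure wave equation
\begin{equation*}
D_t(aD_tq)-\eps^{-2}\nabla\cdot(\rho^{-1}\nabla q)=\mathcal{R},
\end{equation*}
obtained by applying $D_t$ to the continuity equation and $\nabla\cdot$ to the momentum equation, where $\mathcal{R}$ involves only quantities already estimated at the previous step. The Neumann-type boundary condition on $\Sigma$ comes from the third component of the momentum equation together with $u_3=F_{3j}=D_tu_3|_\Sigma=0$. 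Standard elliptic regularity then trades the already-controlled Mach-weighted time derivatives $(\eps\p_t)^k q$ for spatial derivatives of $q$, thereby gaining back normal regularity of $q$ and, via \eqref{continuity eq f}, of $\nabla\cdot u$. A div-curl decomposition using $\nabla\cdot(\rho F_j)=0$, $F_{3j}|_\Sigma=0$ and $u_3|_\Sigma=0$ finally promotes the tangential estimates of $u$ and of each $F_j$ to full $H^{3,\eps}$ bounds.

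\textbf{Main difficulty.} The technical heart of the argument is the vorticity step: it is the only place where the $\eps^{-1}$ singularity of $\nabla q$, the baroclinic torque carrying $\nabla S$, and the elasticity vorticity source all couple at top order, and none of these is tractable without simultaneously invoking the other two. The careful rearrangement of the baroclinic term into directional derivatives that have been pre-processed via $D_t\bigl((F_j+\BF_j)\cdot\nabla S\bigr)=0$, the bookkeeping of commutators between $T^\alpha$, $D_t$ and $(F_j+\BF_j)\cdot\nabla$, and the systematic pairing of every $\eps^{-1}$ with either an $\eps\p_t$ or with the $\eps$-prefactor hidden in $\rho(\eps q,S)$ and $a(\eps q,S)$ are precisely where the two observations highlighted in the introduction must combine. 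Once this is accomplished, a Grönwall argument on $E(t)$ closes the estimate on a time $T>0$ and a threshold $\eps_0\in(0,1)$ depending only on $M$.
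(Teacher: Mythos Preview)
Your overall architecture is right---div-curl decomposition, tangential energy estimates, and a wave equation for $q$---but the vorticity step contains a genuine gap that your enhanced directional regularity does not fix in the way you describe.

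\textbf{The baroclinic term.} You say the $\eps^{-1}\nabla S\times\nabla q$ term ``can be rearranged, modulo controlled remainders, in terms of derivatives of the well-bounded quantities $(F_j+\BF_j)\cdot\nabla S$''. This does not work: the three vectors $F_j+\BF_j$ are all tangential on $\Sigma$ (since $F_{3j}=\BF_{3j}=0$ there), so they cannot span the normal direction, and $\nabla S$ simply cannot be recovered from the directional derivatives $(F_j+\BF_j)\cdot\nabla S$. The paper instead uses the M\'etivier--Schochet device: rewrite the momentum equation as
\[
D_t(\rho_0 u)-\rho_0\sum_{j}(F_j+\BF_j)\cdot\nabla F_j=-\eps^{-1}\nabla q+\frac{\rho-\rho_0}{\eps\rho}\nabla q,\qquad \rho_0:=\rho(0,S),
\]
and observe that $(\rho-\rho_0)/\rho=\eps g(\eps q,S)$ for a smooth $g$. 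Taking curl then produces only $\nabla g\times\nabla q$, which is $O(1)$; the $\eps^{-1}$ is gone before any rearrangement is needed. This is why the paper's $E_1$ carries $\nabla\times(\rho_0 u)$ and $\nabla\times(\rho_0 F_j)$, not the curls of $u$ and $F_j$.

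\textbf{Where the directional regularity is actually used.} The enhanced regularity of $(F_j+\BF_j)\cdot\nabla S$ is not for the baroclinic term at all. In the $H^2$ estimate of $\nabla\times(\rho_0 u)$, the elasticity source $\p^2\nabla\times(\rho_0(F_j+\BF_j)\cdot\nabla F_j)$ generates, after commuting $(F_j+\BF_j)\cdot\nabla$ outside, a term $\p^2\nabla((F_j+\BF_j)\cdot\nabla\rho_0)\times F_j$ carrying four derivatives on $\rho_0$ (equivalently on $S$). Closing this in $H^3$ would fail without knowing that $(F_j+\BF_j)\cdot\nabla S\in H^3$. This is the derivative loss the directional regularity repairs.

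\textbf{The wave equation.} Your acoustic equation $D_t(aD_tq)-\eps^{-2}\nabla\cdot(\rho^{-1}\nabla q)=\mathcal{R}$ is incomplete: the elasticity contributes a top-order term $-\sum_j a\,((F_j+\BF_j)\cdot\nabla)^2 q$ on the left, so $\mathcal{R}$ as you wrote it is not lower order. The paper's reduction crucially converts $\Delta q$ into \emph{both} $\eps^2 D_t^2 q$ and $\eps^2((F_j+\BF_j)\cdot\nabla)^2 q$, and then closes via energy estimates on the wave equation differentiated by the tangential operators $D_t^k((F_l+\BF_l)\cdot\nabla)^{2-k}$; this is what allows the argument to avoid the smallness of $\eps$.
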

		\begin{rmk}[Enhanced ``directional" regularity of the entropy]
			The assumption $S_0\in H^4(\Om)$ is imposed in order that $(F_{j,0}+\BF_{j,0})\cdot\nab S_0$ belongs to $H^3(\Om)$. In this paper, we only need such enhanced regularity of $S$ in the direction of $F_j+\BF_j~(j=1,2,3)$ instead of the full $H^4$ regularity. One can prove that the solution also satisfies $\fjp S\in H^3(\Om)$ as long as the initial data satisfies, and we refer to Corollary \ref{lem_EnergyEsti_S} for the proof.
		\end{rmk}

		The next main theorem concerns the incompressible limit. We consider the incompressible inhomogeneous elastodynamic equations together with a transport equation satisfied by $(u^0,F_j^0,\pi,S^0)$:
		\begin{equation} \label{IElasto}
			\begin{cases}
				\varrho(\p_t u^0 + u^0\cdot\nab u^0) + \nab \pi =\varrho \sum\limits_{j=1}^{3}(F_j^0+\BF_j)\cdot\nabla F_j^0&~~~ \text{in}~[0,T]\times \Omega,\\
				\p_t F_j^0+ u^0\cdot\nab F_j^0=(F_j^0+\BF_j)\cdot\nab u^0 &~~~ \text{in}~[0,T]\times \Omega,\\
				\p_tS^0+u^0\cdot\nab S^0=0&~~~ \text{in}~[0,T]\times \Omega,\\
				\nab\cdot u^0=\nab\cdot (\varrho( F_j^0+\BF_j ))=0&~~~ \text{in}~[0,T]\times \Omega,\\
				u_3^0=F_{3j}^0=0&~~~\text{on}~[0,T]\times\Sigma.
			\end{cases}
		\end{equation}

		\begin{thm}[\textbf{Incompressible limit}] \label{main thm, limit}
			Under the hypothesis of Theorem \ref{main thm, ill data}, we assume that $(u_0,F_{j,0},S_0) \to (u_0^0,F_{j,0}^0,S_0^0)$ in $H^3(\Omega)$ as $\eps\to 0$ with $\nab\cdot ( \rho(0,S_0^0) F_{j,0}^0 )=0$ in $\Om$ and $u_{03}^0=F_{3j,0}^0=0$ on $\Sigma$, and that there exist positive constants $N_0$ and $\sigma$ such that $S_0$ satisfies
			\begin{equation}\label{EntropyDecay}
				|S_0(x)| \le N_0 |x|^{-1-\sigma},\quad |\nabla S_0(x)|\le N_0|x|^{-2-\sigma}.
			\end{equation}
			Then it holds that 
			\begin{align*}
				(q,u,F_j,S) \to (0,u^0,F_j^0,S^0) \quad \text{weakly-* in } L^{\infty}([0,T];H^3(\Om)) \text{ and strongly in } L^2([0,T];H^{3-\delta}_{\mathrm{loc}}(\Om))
			\end{align*}
			for $\delta>0$. $(u^0,F_j^0,S^0)\in C([0,T];H^3(\Om))$ solves \eqref{IElasto} with initial data $(u^0,F_j^0,S^0)|_{t=0}=(w_0,F_{j,0}^0,S_0^0)$, that is, the incompressible elastodynamic equations together with a transport equation of $S^0$, where $w_0\in H^3(\Omega)$ is determined by
			\begin{equation}
				w_{03}|_{\Sigma}=0,\quad \nabla\cdot w_0=0,\quad \nabla\times (\rho(0,S^0_0)w_0 )=\nabla\times(\rho(0,S_0^0)u_0^0).
			\end{equation}
			Here $\varrho$ satisfies the transport equation $$\p_t\varrho+u^0\cdot\nab \varrho =0,~~\varrho|_{t=0}=\rho(0,S_0^0).$$ 
			The function $\pi$ satisfying $\nab\pi\in C([0,T];H^2(\Om))$ represents the fluid pressure for incompressible elastodynamic system \eqref{IElasto}.
		\end{thm}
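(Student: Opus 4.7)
The plan is to follow the Metivier--Schochet--Alazard scheme for the incompressible limit of non-isentropic Euler equations, combined with the structural observations about the elastic variables that underpin Theorem~\ref{main thm, ill data}. By the uniform bound \eqref{energy estimate}, the family $\{(q_\eps, u_\eps, F_{j,\eps}, S_\eps)\}$ is bounded in $L^\infty([0,T]; H^3(\Om))$ and admits a weak-$*$ limit $(0, u^0, F_j^0, S^0)$ along a subsequence. The fact that the pressure perturbation converges to $0$ follows from $\nab q = \eps\cdot(\text{bounded in }H^2)$, read off from the momentum equation, combined with decay of $q\in H^3(\Om)$ at infinity. Similarly $\nab\cdot u = -\eps\,a\, D_t q$ passes in the limit to $\nab\cdot u^0 = 0$, and the Cauchy--Green constraint $\nab\cdot(\rho (F_j + \BF_j)) = 0$ gives $\nab\cdot(\varrho(F_j^0 + \BF_j)) = 0$, where $\varrho$ is the weak limit of $\rho(\eps q_\eps, S_\eps) = \rho(0, S^0) + O(\eps)$.

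The main obstacle is strong convergence in $L^2([0,T]; H^{3-\delta}_{\mathrm{loc}}(\Om))$ for ill-prepared data: the first-order time derivatives of $u_\eps$ and $q_\eps$ are only $O(\eps^{-1})$, so $u_\eps$ carries an acoustic oscillation that does not vanish pointwise in $t$. To isolate this oscillation I would derive a wave equation for $q_\eps$ by applying $\p_t$ to the continuity equation and $\nab\cdot$ to the momentum equation, obtaining a relation of the schematic form
\begin{equation*}
\eps^2 a\,\p_t^2 q_\eps - \nab\cdot\!\left(\tfrac{1}{\rho}\nab q_\eps\right) = \eps\,\mathcal{R}_\eps,
\end{equation*}
where $\mathcal{R}_\eps$ collects convective, elastic and entropy-dependent terms that are uniformly bounded by Theorem~\ref{main thm, ill data}. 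The slip condition $u_3 = F_{3j} = 0$ on $\Sigma$ combined with the third component of the momentum equation yields the Neumann condition $\p_3 q_\eps = 0$ on $\Sigma$, since the elastic normal stress vanishes thanks to $\BF_{3j} = 0$ and $F_{3j}|_\Sigma = 0$. The coefficients $a(\eps q_\eps, S_\eps)$ and $\rho(\eps q_\eps, S_\eps)$ are perturbations of constants at infinity with an algebraic rate, because the decay assumption \eqref{EntropyDecay} on $S_0$ propagates along the Lagrangian flow of $u_\eps$; this is exactly the hypothesis needed to apply the local energy decay machinery of \cite{Metivier2001limit, Alazard2005limit} to our variable-coefficient acoustic operator, yielding
\begin{equation*}
\int_0^T \|q_\eps\|_{L^2(\Om\cap\{|x|\le R\})}^2\dt \longrightarrow 0 \quad \text{as } \eps\to 0
\end{equation*}
for every $R>0$, together with analogous local decay for the potential part of $u_\eps$ obtained via a Helmholtz-type decomposition $u_\eps = v_\eps + \nab\varphi_\eps$ with $\nab\cdot v_\eps = 0$ and $v_{\eps,3}|_\Sigma = 0$.

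The final step is passing to the limit in the nonlinear terms, the delicate ones being the quadratic acoustic products such as $\nab\varphi_\eps\cdot\nab u_\eps$ and the elastic cross-terms $\nab\varphi_\eps\cdot\nab F_{j,\eps}$. These are handled by the microlocal defect measure (H-measure) technique of Tartar--Gerard: the acoustic component is microlocalized on the characteristic variety of the wave operator above, while the incompressible component $v_\eps$ and the elastic variables $F_{j,\eps}$, $S_\eps$ vary on $O(1)$ time scales, so the time-averaged product carries a defect measure supported on the characteristic set, and the standard compensated-compactness computation shows this measure vanishes. I expect the hardest technical point to be carrying out this microlocal argument up to the boundary $\Sigma$ in the presence of the elastic nonlinearity $(F_j + \BF_j)\cdot\nab F_j$: the directional regularity of $S$ in the $F_j + \BF_j$ directions and the divergence constraint $\nab\cdot(\rho(F_j+\BF_j)) = 0$ used in Theorem~\ref{main thm, ill data} provide exactly the tangential compactness needed to control boundary traces of these products. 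Once these vanish, passing to the limit in \eqref{CElasto3} yields \eqref{IElasto} with initial velocity $w_0$ characterized by the Leray-type relations $\nab\times(\rho(0, S_0^0)w_0) = \nab\times(\rho(0, S_0^0)u_0^0)$, $\nab\cdot w_0 = 0$, $w_{03}|_\Sigma = 0$, and standard well-posedness of the limiting incompressible elastodynamic system gives $(u^0, F_j^0, S^0)\in C([0,T]; H^3(\Om))$ together with $\nab\pi\in C([0,T]; H^2(\Om))$.
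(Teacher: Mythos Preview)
Your overall strategy---derive a Neumann-boundary wave equation for $q$, invoke the M\'etivier--Schochet/Alazard defect-measure machinery under the entropy decay \eqref{EntropyDecay}, and pass to the limit---matches the paper's. But you misidentify where the difficulty lies, and this leads you to propose machinery that is not needed.

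The key simplification you are missing is that $S$, $F_j$, and $\nabla\times(\rho_0 u)$ converge strongly \emph{for free}. The equations $D_t S=0$, $D_t F_j=(F_j+\BF_j)\cdot\nabla u$, and the vorticity equation \eqref{curlv eq} contain no $\eps^{-1}$ terms, so $\p_t S$, $\p_t F_j$, and $\p_t(\nabla\times(\rho_0 u))$ are uniformly bounded in $H^2$ (resp.\ $H^1$); Aubin--Lions then gives strong convergence in $C([0,T];H^{3-\delta}_{\mathrm{loc}})$ (resp.\ $H^{2-\delta}_{\mathrm{loc}}$) immediately, before any acoustic analysis. The defect-measure argument is applied \emph{only} to $q$ (and hence to $\nabla\cdot u=-\eps a D_t q$), exactly as in \cite{Alazard2005limit}; the elastic terms do not enter that step at all.

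Consequently, your ``delicate quadratic acoustic products'' $\nabla\varphi_\eps\cdot\nabla F_{j,\eps}$ and the boundary microlocal analysis you anticipate are not needed. Once $q\to 0$, $\nabla\cdot u\to 0$, $F_j\to F_j^0$, $S\to S^0$, and $\nabla\times(\rho_0 u)\to\nabla\times(\rho_0(S^0)u^0)$ all strongly, the paper recovers strong convergence of $u$ itself via the Leray projection (writing $u=\mathcal{P}u+\mathcal{Q}u$ and using that $\mathcal{P}(\rho_0 u)$ is controlled by the curl and $\mathcal{Q}u$ by the divergence). All nonlinear terms then pass to the limit as products of strongly convergent factors---no compensated compactness or H-measures for cross-terms. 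The enhanced directional regularity of $S$ is used only in the uniform estimates of Theorem~\ref{main thm, ill data}, not in the convergence proof.
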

		
		\begin{rmk}[Unboundedness of the domain]
			In Theorem \ref{main thm, ill data}, the uniform-in-$\eps$ estimate can be established regardless of the boundedness of $\Om$. The unboundedness of $\Om$ is required in the proof of strong convergence. In fact, the strong convergence in time can be obtained by proving local energy decay due to the (global) dispersion property for the wave equation of the pressure as in \cite{Metivier2001limit, Alazard2005limit}, in which the unboundedness of $\Om$ and the entropy decay condition \eqref{EntropyDecay} are both needed.
		\end{rmk}		
		
		\begin{rmk}[The case of domains with curved boundaries]
			We choose $\Om=\R^2\times\R_-$ for technical simplicity as its boundary is flat, but our conclusion still holds for some unbounded domain with a curved boundary, for example, the case that $\Omega$ is the exterior of a bounded domain with an $H^{3.5}$ boundary $\Sigma$, as shown in Alazard \cite{Alazard2005limit}. We note that such regularity of $\Sigma$ is required to ensure the div-curl inequality in Lemma \ref{lem divcurl}, according to \cite[Theorem 1.1(2)]{CSdivcurl}. In such case, $\Omega$ has a finte covering such that
			$$
			\Omega \subset \Omega_0 \bigcup\left(\bigcup_{i=1}^m \Omega_i\right), \quad \Omega_0 \Subset \Omega, \quad \Omega_i \cap \Sigma \neq \emptyset,
			$$
			and $\Omega_i \cap \Sigma$ is the graph of a smooth function $z=\varphi_i\left(x_1, x_2\right)$. We use the local coordinates in each $\Omega_i$, $i=1,2, \cdots, m$ :
			$$
			\begin{aligned}
				\Phi_i:(-1,1)^2 \times(-1,0) & \rightarrow \Omega_i \cap \Omega \\
				(y, z)^{\mathrm{T}} & \rightarrow \Phi_i(y, z)=\left(y, \varphi_i(y)+z\right)^{\mathrm{T}} .
			\end{aligned}
			$$
			
			We denote by $N$ the unit outward normal to the boundary. In each $\Omega_i$, we can extend it to $\Omega_i$ by setting
			$$
			N(x):=N\left(\Phi_i(y, z)\right)=\left(1+\left|\cnab \varphi_i(y)\right|^2\right)^{-1 / 2}\left(-\partial_1 \varphi_i(y), -\partial_2 \varphi_i(y),1\right)^{\mathrm{T}},\q\q \cnab:=(\p_1,\p_2)^{\mathrm{T}} .
			$$
			In such case, the basic states $\bar{F_j}$ can be suitably-chosen smooth functions, not necessarily constants. For example, we define the matrix function $\bar{F}(x)=\left(\bar{F}_1, \bar{F}_2, \bar{F}_3\right)(x)$ as
			$$
			\bar{F}(x)=\left\{\begin{array}{lll}
				I_3, & x \in \Omega_0, \\
				\bar{F}\left(\Phi_i(y, z)\right)=\left(\begin{array}{ccc}
					1 & 0 & 0 \\
					0 & 1 & 0 \\
					\partial_1 \varphi_i(y) & \partial_2 \varphi_i(y) & \phi(z)
				\end{array}\right), & x \in \Omega_i \cap \Omega,
			\end{array} \quad \phi(z)=\frac{-z}{1-z} .\right.
			$$
			
			When $\Omega$ is the half space $\mathbb{R}_{-}^3$, one of the choices for a non-constant $\bar{F}$ would be
			$$ 
			\bar{F}(x)=\left(\begin{array}{ccc}
				1 & 0 & 0 \\
				0 & 1 & 0 \\
				0 & 0 & \phi\left(x_3\right)
			\end{array}\right), \quad \phi\left(x_3\right)=\frac{-x_3}{1-x_3} .
			$$
			
			It is easy to verify that
			$$
			\begin{array}{cc}
				\operatorname{det} \bar{F}\neq 0 & \text { in }[0, T] \times \Omega, \\
				\bar{F}_j \cdot N=0 & \text { on }[0, T] \times \Sigma .
			\end{array}
			$$
		\end{rmk}		
		
		\subsection{Organization of the paper}\label{sect org} 
		This paper is organized as follows. In Section \ref{sect overview intro}, we discuss the major difficulties in this problem. Then Section \ref{sect uniform} is devoted to the proof of uniform estimates in Mach number. The strong convergence for the incompressible limit problem is proved in Section \ref{sect limit}.  In Appendix \ref{sect lemma}, we record several lemmas that are repeatedly used throughout this manuscript.

		\subsection*{List of Notations}
		\begin{itemize}
			\item ($L^\infty$-norm) $\|\cdot\|_{\infty}:= \|\cdot\|_{L^\infty(\Omega)}$, {$|\cdot|_{\infty}:=\|\cdot\|_{L^\infty(\Sigma)}$}. 
			\item (Interior Sobolev norm) $\|\cdot\|_{s}$:  We denote $\|f\|_{s}:= \|f(t,\cdot)\|_{H^s(\Omega)}$ and $\|f\|_{s,\eps}^2=\sum_{k=0}^{s}\|(\eps\p_t)^k f\|_{s-k}^2$ for any function $f(t,y)\text{ on }[0,T]\times\Omega$.
			\item  (Boundary Sobolev norm) $|\cdot|_{s}$:  We denote $|f|_{s}:= |f(t,\cdot)|_{H^s(\Sigma)}$ for any function $f(t,y)\text{ on }[0,T]\times\Sigma$.
			\item (Polynomials) $P(\cdots)$ denotes a generic polynomial in its arguments.
			\item (Commutators) $[T,f]g=T(fg)-f(Tg)$, $[f,T]g=-[T,f]g$ where $T$ is a differential operator and $f,g$ are functions.
			\item (Leray projection operator) Consider the orthogonal decompostion $L^2(\Omega)=H_{\sigma} \oplus G_{\sigma}$ with $H_\sigma=\{u\in L^2(\Omega): \int_{\Om} u\cdot\nabla\phi,\ \forall \phi\in H^1(\Omega)\}$ and $G_{\sigma}=\{\nabla\psi: \psi\in H^1(\Omega)\}$. Let $\mathcal{P}$ be the projection onto $H_{\sigma}$ and $\mathcal{Q}= I_3 - \mathcal{P}$.
		\end{itemize}

		\section{Difficulties and strategies}\label{sect overview intro}
		
		System \eqref{CElasto} is symmetric hyperbolic with characteristic boundary conditions \cite{Rauch1985} and so there is a potential to have loss of normal derivatives, which is expected to be compensated by using div-curl analysis. Indeed, major difficulties in this problem exactly appear in the proof of div-curl estimates. Once we establish the control for the divergence and the vorticity, the uniform estimates can be closed by controlling the full-time derivatives which is parallel to the $L^2$ estimate. After that, the strong convergence to the incompressible  system can be proved via a slight variant of the argument in \cite{Alazard2005limit}.
		
		Below, we briefly discuss our observations that are used to overcome the main difficulties that do not appear in the study of Euler equations or isentropic fluids. 
		
		\subsection{Observation 1: Enhanced ``directional" regularity of the entropy}
		
		Since we are considering the non-isentropic case with general initial data, inspired by \cite{Metivier2001limit}, we shall rewrite the momentum equation as
		\begin{equation}
			D_t(\rho_0 u) -\rho_0 \sum_{j=1}^{3}(F_j+\BF_j)\cdot\nabla F_j=-\eps^{-1}\nabla q  + \frac{\rho-\rho_0}{\eps \rho}\nabla q
		\end{equation}
		with $\rho_0=\rho(0,S)$ to analyze the vorticity. Then the evolution equation of the vorticity becomes
		\begin{equation}
			D_t(\nabla\times(\rho_0 u))- \sum_{j=1}^{3}\nabla\times( \rho_0 (F_j+\BF_j)\cdot\nabla F_j  )- \nabla\left( \frac{\rho-\rho_0}{\eps \rho} \right)\times\nabla q=\text{ controllable terms}.
		\end{equation} The reason for replacing $\rho$ by $\rho_0$ is that $D_t \rho_0=0$ allows us to ``hide" this coefficient into $D_t$, otherwise, when taking derivatives $\p^\alpha$ in the momentum equation, there must be terms like $\p^\alpha \rho D_t u$ appearing without any $\eps$-weight. When $\p^\alpha$ falls on $S$ in $\rho=\rho(\eps q,S)$, $\p^\alpha\rho$ must generate an $O(1)$-size term in front of $D_t u$ and thus there exhibits a loss of weights of Mach number due to the ill-preparedness of initial data $(\p_t u = O(1/\eps))$.
		
		However, for elastodynamics, the simultaneous appearance of the deformation tensor, the non-constant entropy and compressibility leads to an extra loss of derivative in vorticity analysis. For example, in the $H^2$-control of $\nabla\times(\rho_0 u)$, we must encounter the following underlined terms
		\begin{align}
			&\ddt \int_{\Om}|\p^2\nabla\times(\rho_0 u)|^2 + \sum_{j=1}^{3}|\p^2\nabla\times(\rho_0 F_j)|^2 \notag\\
			=&-\sum_{j=1}^{3}\int_{\Om} (\underline{  \p^2\nabla( (F_j+\BF_j)\cdot \nabla\rho_0 } )\times F_j  )\cdot \p^2\nabla\times (\rho_0 u) \dx\notag\\
			&-\sum_{j=1}^{3}\int_{\Om} (  \underline{\p^2\nabla( (F_j+\BF_j)\cdot \nabla\rho_0  })\times u  )\cdot \p^2\nabla\times (\rho_0 F_j) \dx + \text{controllable terms},
		\end{align}
		where we find that there are 4 derivatives falling on $\rho_0$ (equivalently, on $S$) and thus the vorticity estimates cannot be closed in the setting of third-order Sobolev spaces. To overcome such difficulty, we just need a rather simple observation: the material derivative commutes with the directional derivatives $\fjp$ for $j=1,2,3$, which is even easier to observe if one uses Lagrangian coordinates (to study the free-boundary problems) as in the second author's work \cite{Zhang2021elasto} because $D_t$ becomes $\p_t$ and each $\fjp$ becomes time-independent in Lagrangian coordinates. Thus, $(F_j+\BF_j)\cdot \nabla S$ also has $H^3(\Om)$ regularity as long as its initial data belongs to $H^3(\Om)$. That is exactly why we impose such enhanced regularity for the initial data $S_0$. The details are referred to Section \ref{sect entropy} and \ref{sect curl}. To our knowledge, such observation does not appear in previous works, but it can really help us prove the uniform estimates in Mach number for the vorticity part.  
		
		\subsection{Observation 2: Structure of the wave equation of $q$}
		
		As stated at the end of Section \ref{sect history}, the smallness of Mach number $\eps$ is required to prove the uniform estimates in previous works about incompressible limit of non-isentropic inviscid fluids in a domain with boundary, because the divergence part, namely $\nab\cdot u \approx -\eps D_t q$, essentially contributes to $\eps E(t)$. For elastodynamics, we also have $\nab\cdot F_j\approx -\eps \fjp q$ which still contributes to $\eps E(t)$. Then the smallness of $\eps$ can be used to absorb such $\eps E(t)$ terms in the Gr\"onwall-type inequality $E(t)\lesssim \eps E(t) + P(E(0)) + \int_0^t P(E(\tau))\dtau$. 
		
		In this paper, we aim to drop the smallness assumption of $\eps$, that is, our energy estimates are uniform in $\eps$ even if the Mach number $\eps$ is not small. This means that the low Mach number limit automatically holds if we use the same energy to prove the local existence, for example, by standard Picard iteration in \cite[Appendix A]{WZ2023CMHDlimit}. Therefore, we must seek for a different way to control the divergence part.
		
		Note that the pressure $q$ satisfies a wave equation
		\begin{align}
			\eps^2 a D_t^2 q - \nab \cdot (\rho^{-1}\nab q) - \sum_{j=1}^3 \eps^2 a (\fjp)^2 q = \mathcal{G}^\eps
		\end{align}with Neumann boundary condition $\p_3 q = 0$ on $\Sigma$ and $\|\eps^{-1} \mathcal{G}^\eps\|_2$ controlled by $P(E(t))$ uniformly in $\eps$. It is easy to observe that standard wave-type estimate already gives us the uniform $L^2(\Om)$ control of $\eps D_t q$, $\eps\fjp q$ and $\nab q$. To control high-order Sobolev norms, we must  reduce the normal derivatives to tangential derivatives as taking normal derivatives does not preserve the Neumann boundary condition. This can be done by combining the div-curl inequality (Lemma \ref{lem divcurl}) and the concrete form of the wave equation. For example, we have
		\[
		\|\nab q\|_2^2\lesssim \|\nab q\|_0^2+\|\lap q\|_1^2+\underbrace{\|\nab\times\nab q\|_1^2+|\p_3 q|_{1.5}^2}_{=0},
		\]and then $\lap q$ can be converted to $\eps^2 D_t^2 q$ and $\eps^2(\fjp)^2 q$, which are tangential derivatives, plus lower-order terms that are easy to control. We can do similar things for $\eps D_t q$ and $\eps \fjp q$
		\begin{align*}
			\|\eps \nab D_tq\|_2^2\lesssim&~ \|\eps\nab D_t q\|_0^2+\|\eps \lap D_t q\|_0^2+\underbrace{\|\eps \nab\times\nab D_t q\|_0^2+|\eps D_t\p_3 q|_{0.5}^2}_{=0}+\underbrace{|\eps [D_t,\p_3]q|_{\frac12}^2}_{\text{lower order}},\\
			\|\eps \nab \fjp q\|_2^2\lesssim&~ \|\eps\nab \fjp q\|_0^2+\|\eps \lap \fjp q\|_0^2\\
			&+\underbrace{\|\eps \nab\times\nab \fjp q\|_0^2+|\eps \fjp\p_3 q|_{0.5}^2}_{=0}+\underbrace{|\eps [\fjp,\p_3]q|_{\frac12}^2}_{\text{lower order}}.
		\end{align*}Therefore, the control of $\eps\lap D_t q, \eps\lap \fjp q$ and $\lap q$ can be further converted to that of 
		\begin{equation}\label{reduce div intro}
			\begin{aligned}
				&\|\eps^3 D_t^3 q\|_0^2,~\|\eps^2 \nab D_t^2 q\|_0^2~~~(l=1,2,3),\\
				&\|\eps^2\nab D_t\flp  q\|_0^2,~\|\eps^3D_t^2\flp q\|_0^2,\\
				&\sum_{j=0}^3\|\eps^3 D_t (\fjp)^2 q\|_0^2,~\sum_{j=0}^3\|\eps^3  (\fjp)^2\flp q\|_0^2~~~(l=1,2,3).
			\end{aligned}
		\end{equation} The above quantities can all be controlled via the wave equation of $q$ differentiated by {\it tangential derivatives} $D_t^k(\fjp)^{2-k} q$ for $k=0,1,2$ with certain $\eps$ weights thanks to the degeneracy of $F_{3j}$ on $\Sigma$. Those lower order terms generated in the reduction process can all be controlled by repeatedly using multiplicative Sobolev inequality and Young's inequality. We refer to Section \ref{sect div} for the detailed analysis.

		\section{Uniform energy estimates}\label{sect uniform}
		
		Using Lemma \ref{lem divcurl}, we have for $k=0,1,2$ that 
		\begin{align}\label{E_reduce}
			\|(\eps\p_t)^k(u, F_j)\|_{3-k}^2\lesssim &~\|(\eps\p_t)^k(u, F_j)\|_0^2 + \|(\eps\p_t)^k(\nab\cdot u, \nab\times F_j)\|_{2-k}^2 \notag\\
			&+  \|(\eps\p_t)^k(\nab\cdot u, \nab\times F_j)\|_{2-k}^2 + |(\eps\p_t)^k(u_3,F_{3j})|_{\frac52 - k}^2.
		\end{align}Since $u_3=F_{3j}=0$ on $\Sigma$ eliminates the boundary terms, the above inequality motivates us to define $E_1(t)$ and $E_2(t)$ as follows:
		\begin{align}
			\label{E1_energy}	E_1(t)=&\sum_{k=0}^{3}\|(\eps\p_t)^k(q,u,F_j)\|^2_0  + \|S\|_{3,\eps}^2 + \|(F_j+\BF_j)\cdot\nabla S\|_{3,\eps}^2+\|\nabla\times (\rho_0 u)\|_{2,\eps}^2 + \|\nabla\times (\rho_0 F_j)\|_{2,\eps}^2,\\
			\label{E2_energy}	E_2(t)=&~\|\nabla q\|_{2,\eps}^2 + \|\nabla\cdot u\|_{2,\eps}^2 + \|\nabla\cdot F_j\|_{2,\eps}^2 +\|\nabla\times u\|_{2,\eps}^2 + \|\nabla\times F_j\|_{2,\eps}^2,
		\end{align}
		with $\rho_0=\rho(0,S)$. It should be noted that we impose the curl of $\rho_0 u$ and $\rho_0 F$ instead of that of $u$ and $F$ in $E_1(t)$. This substitution is necessary to overcome some technical difficulties in vorticity analysis for the non-isentropic problems with general initial data. 
		
		In order to show the uniform-in-$\eps$ estimates \eqref{energy estimate} holds, it suffices to find norms $E_1(t)$ and $E_2(t)$ satisfying
		\begin{equation}\label{equiv norm}
			E(t) \le C (E_1(t) + E_2(t)),\q\q\text{ for some }C>0\text{ independent of }\eps
		\end{equation}
		and the following  uniform-in-$\eps$ control
		\begin{align}
			&\ddt E_1(t) \le P(E(t)),\label{E1_estimate}\\
			&E_2(t)\le P(E_1(t)) + \delta E(t) + P(E(0)) + P(E(t))\int_0^t P(E(\tau))\dtau,\label{E2_estimate}
		\end{align}for any constant $\delta\in(0,1)$, which then leads to our desired estimates by using Gr\"onwall-type inequality.
		
		Obviously, \eqref{equiv norm} holds true for the above norms thanks to \eqref{E_reduce}. Hence, the rest part of this section is devoted to deriving estimates \eqref{E1_estimate} and \eqref{E2_estimate}. 
		
		\subsection{$L^2$ estimate}\label{sect L2}
		First, it is easy to prove the $L^2$ energy estimate for the dimensionless elastodynamic system \eqref{CElasto3}.
		\begin{prop}\label{L2 energy}
			Define the $L^2$ energy of system \eqref{CElasto3} by
			\begin{align}
				E_0(t):=\frac12\io\rho|u|^2+\sum_{j=1}^3\rho|F_j|^2 +\rho S^2 + a q^2\dx.
			\end{align}Then it satisfies
			\begin{align}\label{L2 estimate}
				\frac{\mathrm{d} E_0(t)}{\dt}\leq P(E(t)).
			\end{align}
		\end{prop}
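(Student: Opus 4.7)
The plan is to differentiate $E_0(t)$ in time and exploit the well-known cancellation of the singular $\eps^{-1}$ terms together with the elastic cross-cancellation to reduce the estimate to harmless lower-order contributions.

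First, I will use the conservation-of-mass type identity: since $\rho$ satisfies $\partial_t \rho + \nabla\cdot(\rho u)=0$ and $u_3=0$ on $\Sigma$, for any smooth $f$ one has $\tfrac{d}{dt}\int_\Omega \rho f\dx=\int_\Omega \rho D_t f\dx$. Applying this with $f=\tfrac12|u|^2$, $\tfrac12|F_j|^2$, and $\tfrac12 S^2$ reduces the time derivative of those three summands in $E_0$ to $\int\rho u\cdot D_t u$, $\int\rho F_j\cdot D_t F_j$, and $\int\rho S\, D_t S$. The third is zero by the entropy equation. For the $aq^2$ term I will expand $\tfrac{d}{dt}\int \tfrac12 aq^2$ directly and substitute $\partial_t a= D_t a - u\cdot\nabla a$, arriving after one integration by parts at
\begin{equation*}
\tfrac{d}{dt}\!\int_\Omega \tfrac12 aq^2\dx=-\eps^{-1}\!\int_\Omega q\,\nabla\cdot u\dx+\tfrac12\!\int_\Omega (D_t a+a\,\nabla\cdot u)\,q^2\dx,
\end{equation*}
where the boundary contribution vanishes because $u_3|_\Sigma=0$.

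Next I will substitute the evolution equations. The momentum equation gives
\begin{equation*}
\int_\Omega \rho u\cdot D_t u\dx=-\eps^{-1}\!\int_\Omega u\cdot\nabla q\dx+\sum_{j=1}^3\int_\Omega \rho u\cdot\bigl((F_j+\BF_j)\cdot\nabla F_j\bigr)\dx,
\end{equation*}
and integration by parts (again using $u_3|_\Sigma=0$) converts the first term on the right to $+\eps^{-1}\int q\,\nabla\cdot u$, which exactly cancels the singular term obtained above from the $aq^2$ computation. For the elastic term, I will integrate by parts using the constraint $\nabla\cdot(\rho(F_j+\BF_j))=0$ and the boundary condition $F_{3j}|_\Sigma=0$; this shows that $\int\rho u\cdot ((F_j+\BF_j)\cdot\nabla F_j)\dx=-\int\rho F_j\cdot ((F_j+\BF_j)\cdot\nabla u)\dx$, which cancels exactly against $\int \rho F_j\cdot D_t F_j\dx$ produced by the $F_j$-equation.

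After all cancellations, the only surviving term is $\tfrac12\int (D_t a + a\,\nabla\cdot u)q^2\dx$. Using $D_t S=0$ and $aD_t q=-\eps^{-1}\nabla\cdot u$, one obtains $D_t a=(\partial_p a)\eps D_t q=-(\partial_p a/a)\nabla\cdot u$, so this remainder reduces to $\tfrac12\int \bigl(a-\partial_p a/a\bigr)(\nabla\cdot u)q^2\dx$, which is bounded by $\|\nabla\cdot u\|_\infty\|q\|_0^2$ and hence by $P(E(t))$ via Sobolev embedding of $\nabla\cdot u\in H^2(\Omega)\hookrightarrow L^\infty(\Omega)$ (since $E(t)$ controls $\|u\|_{3,\eps}$). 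No step is genuinely difficult here; the only point requiring care is keeping track of the boundary contributions so that all of them drop out by virtue of $u_3=F_{3j}=0$ on $\Sigma$, and handling the $a$-weight in $aq^2$ so that the $\eps^{-1}$ cancellation is exact.
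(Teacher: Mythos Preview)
Your proof is correct and follows essentially the same approach as the paper's: use the Reynolds transport identity for the $\rho$-weighted terms, exploit the elastic cross-cancellation via the constraint $\nabla\cdot(\rho(F_j+\BF_j))=0$ together with $D_t F_j=(F_j+\BF_j)\cdot\nabla u$, and cancel the $\eps^{-1}$ pressure terms after one integration by parts. The only cosmetic difference is that the paper leaves the remainder as $\tfrac12\int (D_t a - u\cdot\nabla a + \nabla\cdot(au))q^2\dx$ and bounds it by $\|q\|_0^2(\|\eps D_t q\|_\infty + \|a\|_{W^{1,\infty}}\|u\|_{W^{1,\infty}})$, whereas you simplify one step further to $\tfrac12\int(a-\partial_p a/a)(\nabla\cdot u)q^2\dx$ before estimating; both are equivalent.
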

		\begin{proof}
			Using the continuity equation, we know for each function $f\in L^2(\Om)$ satisfying $D_t f\in L^2(\Om)$, the Reynolds transport formula holds
			\begin{align}\label{transpt formula}
				\frac12\ddt\io\rho |f|^2\dx = \io \rho (D_t f) f\dx.
			\end{align}Thus, we have $\frac12\ddt\io\rho S^2\dx=\io\rho(D_t S)S\dx=0$ and 
			\begin{align*}
				\ddt\frac12\io \rho|u|^2\dx=\io\rho D_tu \cdot u\dx=\sum_{j=1}^3\io \left(\rho \fjp F_j\right)\cdot u\dx - \frac{1}{\eps}\io u\cdot\nab q\dx.
			\end{align*}Integrating by parts and using $\BF_{3j}=F_{3j}=u_3=0$ on $\Sigma$, we get
			\begin{align*}
				\ddt\frac12\io \rho|u|^2\dx=&-\sum_{j=1}^3\rho F_j\cdot\underbrace{\left(\fjp u\right)}_{=D_t F_j} + \underbrace{\nab\cdot(\rho(F_j+\BF_j))}_{=0}\,F_j\cdot u\dx+\frac{1}{\eps}\io \underbrace{(\nab\cdot u)}_{=-\eps a D_t q} q\dx\\
				=&-\frac12\ddt\io \sum_{j=1}^3\rho |F_j|^2+ a q^2 \dx + \frac12\io (D_t a -(u\cdot\nab)a + \nab\cdot(au)) q^2\dx.
			\end{align*} Since $D_t a (\eps q , S)=\eps D_t q\frac{\p a}{\p p} + D_t S \frac{\p a}{\p S}=\eps D_t q\frac{\p a}{\p p} $ has no loss of $\eps$-weight, we know
			\[
			\frac{\mathrm{d} E_0(t)}{\dt}\lesssim \|q\|_{0}^2\left(\|\eps D_t q\|_{\infty} + \|a\|_{W^{1,\infty}(\Om)}\|u\|_{W^{1,\infty}(\Om)}\right)\leq P(E(t))
			\]as desired.
		\end{proof}
		\subsection{Estimates of entropy and its enhanced directional regularity}\label{sect entropy}
		The entropy has enhanced regularity in the direction of $F_j+\BF_j$ for each $j\in\{1,2,3\}$.
		\begin{lem}\label{lem_Trans Equ}
			For system \eqref{CElasto3}, we have $[D_t, \fjp]=0$ for $j=1,2,3$. In particular, this leads to
			\begin{equation}
				D_t( (F_j+\BF_j)\cdot\nabla S)=0.
			\end{equation}
		\end{lem}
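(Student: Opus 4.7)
The plan is to verify the commutator identity $[D_t,(F_j+\bar F_j)\cdot\nabla]=0$ by a direct computation on a test function $f$, then apply it to $f=S$ using $D_tS=0$. There are really only two algebraic ingredients: the evolution equation $D_tF_j=(F_j+\bar F_j)\cdot\nabla u$ (third equation of \eqref{CElasto3}), the fact that $\bar F_j$ is constant (so $D_t\bar F_j=0$), and the standard commutator $[D_t,\nabla]=-(\nabla u)^{\mathrm{T}}\nabla$, i.e.\ $[D_t,\p_i]f=-(\p_i u^k)\p_kf$.

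More precisely, I would expand
\begin{equation*}
[D_t,(F_j+\bar F_j)\cdot\nabla]f
= \bigl(D_t(F_j+\bar F_j)\bigr)\cdot\nabla f
+ (F_j+\bar F_j)^i\bigl(D_t\p_i f - \p_i D_t f\bigr).
\end{equation*}
The first term equals $\bigl((F_j+\bar F_j)\cdot\nabla u\bigr)\cdot\nabla f$ by the evolution equation and $D_t\bar F_j=0$. For the second term, since $[\p_t,\p_i]=0$ and $[u^k\p_k,\p_i]f=-(\p_i u^k)\p_k f$, one obtains
\begin{equation*}
(F_j+\bar F_j)^i(D_t\p_i - \p_i D_t)f
= -(F_j+\bar F_j)^i(\p_i u^k)\p_kf
= -\bigl((F_j+\bar F_j)\cdot\nabla u\bigr)\cdot\nabla f,
\end{equation*}
which exactly cancels the first term. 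Hence $[D_t,(F_j+\bar F_j)\cdot\nabla]=0$. Specializing to $f=S$ and invoking $D_tS=0$ from \eqref{CElasto3} gives $D_t\bigl((F_j+\bar F_j)\cdot\nabla S\bigr)=(F_j+\bar F_j)\cdot\nabla(D_tS)=0$.

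There is essentially no obstacle here; the only bookkeeping point worth flagging is that the identity relies on $\bar F_j$ being \emph{constant}, so in the curved-boundary variant mentioned in the remark (where $\bar F_j$ is a prescribed function of $x$), one would instead need to incorporate an extra term $(u\cdot\nabla)\bar F_j\cdot\nabla f$ and the cancellation would need to be re-examined. In the half-space setting of this theorem, however, the computation is immediate and the lemma follows. This identity is precisely what enables the enhanced directional regularity of $S$ used later: since $D_t$ and $(F_j+\bar F_j)\cdot\nabla$ commute, the quantity $(F_j+\bar F_j)\cdot\nabla S$ is transported along the flow in the same way as $S$ itself, so $H^3$ regularity of its initial value propagates.
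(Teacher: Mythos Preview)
Your proof is correct and follows essentially the same approach as the paper: a direct computation of the commutator $[D_t,(F_j+\bar F_j)\cdot\nabla]$ on a test function, using the evolution equation $D_tF_j=(F_j+\bar F_j)\cdot\nabla u$ and the elementary commutator $[D_t,\p_i]=-(\p_i u^k)\p_k$, followed by specialization to $f=S$ with $D_tS=0$. Your additional remark about the curved-boundary variant (where $\bar F_j$ is non-constant) is a helpful observation beyond what the paper records in its proof.
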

		\begin{proof}
			For any function $f$, we compute that,
			\begin{align*}
				[D_t, \fjp]f=&~ \p_t (\fjp f)+u\cdot\nab(\fjp f) - \fjp(\p_t f+u\cdot\nab f)\\
				=&~(\p_t \fj +u\cdot\nab \fj)\cdot\nab f-(\fjp u)\cdot\nab f \\
				=&~(D_t F_j - \fjp u)\cdot \nab f=0.
			\end{align*} 
			In particular, $D_t S=0$ then leads to $D_t(\fjp S)=0$.
		\end{proof}

		Since $D_t S=0$ and $D_t((F_j+\BF_j)\cdot\nabla S)=0$, we can easily prove the estimates for $S$ and $(F_j+\BF_j)\cdot\nabla S$ by directly commuting $D_t$ with $\eps^k\p_t^k\p^{\alpha}$ for $k=1,2,3$ and using \eqref{transpt formula}. The proof does not involve any boundary term because we do not integrate by parts.
		\begin{cor}\label{lem_EnergyEsti_S}
			Under the assumptions of Theorem \ref{main thm, ill data}, we have
			\begin{equation}
				\ddt \left(  \|S\|_{3,\eps}^2 + \|(F_j+\BF_j)\cdot\nabla S\|_{3,\eps}^2  \right) \le P(E(t)).
			\end{equation}
		\end{cor}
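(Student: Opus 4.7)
The plan is to exploit the two transport identities $D_t S=0$ and $D_t((F_j+\BF_j)\cdot\nabla S)=0$ (from Lemma~\ref{lem_Trans Equ}) directly. For each multi-index pair $(k,\alpha)$ with $0\le k\le 3$ and $|\alpha|\le 3-k$, I would apply the operator $(\eps\p_t)^k\p^\alpha$ to each transport equation. Since such an operator does not commute with $D_t$, this produces
\[
D_t\bigl((\eps\p_t)^k\p^\alpha S\bigr)=\bigl[D_t,(\eps\p_t)^k\p^\alpha\bigr] S,\q
D_t\bigl((\eps\p_t)^k\p^\alpha((F_j+\BF_j)\cdot\nab S)\bigr)=\bigl[D_t,(\eps\p_t)^k\p^\alpha\bigr]\bigl((F_j+\BF_j)\cdot\nab S\bigr).
\]
Testing against the same derivative of the unknown weighted by $\rho$ and invoking the Reynolds transport formula \eqref{transpt formula} (which relies on the continuity equation $D_t\rho+\rho\,\nabla\cdot u=0$), I obtain
\[
\tfrac12\ddt\io\rho\,|(\eps\p_t)^k\p^\alpha S|^2\dx=\io\rho\,\bigl[D_t,(\eps\p_t)^k\p^\alpha\bigr]S\cdot(\eps\p_t)^k\p^\alpha S\dx,
\]
and similarly for $(F_j+\BF_j)\cdot\nab S$. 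No boundary term appears because the argument never integrates a spatial derivative by parts.

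The next step is to expand the commutator. Writing $D_t=\p_t+u\cdot\nab$, one has $[D_t,\eps\p_t]=-(\eps\p_t u)\cdot\nab$ and $[D_t,\p_i]=-(\p_i u)\cdot\nab$, so by iteration $[D_t,(\eps\p_t)^k\p^\alpha]$ equals a sum of terms of the form $(\eps\p_t)^{k_1}\p^{\alpha_1}u\cdot\nab(\eps\p_t)^{k_2}\p^{\alpha_2}$ with $k_1+k_2=k$, $|\alpha_1|+|\alpha_2|=|\alpha|$ and $(k_1,\alpha_1)\neq(0,0)$. Crucially, the top spatial order on the transported quantity drops by one compared to naively distributing derivatives, so every factor in every term has order at most $3$ in the $(\eps\p_t,\nab)$-scaled sense. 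Standard Moser-type product estimates together with the Sobolev embedding $H^2(\Om)\hookrightarrow L^\infty(\Om)$ then give pointwise bounds of the form
\[
\bigl\|\bigl[D_t,(\eps\p_t)^k\p^\alpha\bigr] S\bigr\|_0\lesssim \|u\|_{3,\eps}\,\|S\|_{3,\eps}\le P(E(t)),
\]
and analogously for $(F_j+\BF_j)\cdot\nab S$, where one also uses that derivatives of $F_j+\BF_j$ are controlled by $E(t)$.

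Summing over $k$ and $\alpha$ and applying Cauchy--Schwarz together with $\rho\ge\bar\rho_0>0$ yields the desired bound $\ddt\bigl(\|S\|_{3,\eps}^2+\|(F_j+\BF_j)\cdot\nab S\|_{3,\eps}^2\bigr)\le P(E(t))$. The only subtle point is the commutator term for the second quantity in which $k=0$ and all three spatial derivatives fall on $u$ or $F_j$, producing a factor $\p^3 u$ or $\p^3 F_j$ in $L^2$ paired with $\nab S\in H^2\hookrightarrow L^\infty$; this is exactly where the enhanced directional regularity of $S_0$ is needed so that the $H^3$ norm of $(F_j+\BF_j)\cdot\nab S$ is meaningful on the left-hand side, and this is the main (mild) obstacle. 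Since the argument never touches the boundary and never uses extra normal derivatives of $S$, full $H^4$ regularity of $S$ is not required, consistent with the remark following Theorem~\ref{main thm, ill data}.
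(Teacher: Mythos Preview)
Your proposal is correct and follows essentially the same approach as the paper: commute $D_t$ with $(\eps\p_t)^k\p^\alpha$, use the Reynolds transport formula \eqref{transpt formula}, and control the resulting commutators by product estimates, with no integration by parts and hence no boundary terms. One harmless slip: in the commutator $[D_t,(\eps\p_t)^k\p^\alpha]$ only derivatives of $u$ appear (not of $F_j$), since $D_t=\p_t+u\cdot\nab$; the quantity $(F_j+\BF_j)\cdot\nab S$ is treated as a single unknown whose $\|\cdot\|_{3,\eps}$ norm already sits in $E(t)$.
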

		From this corollary, we see why we require $\fjp S$ has the same regularity as $S$. In fact, this corollary plays a significant role in vorticity analysis.
		
		\subsection{Vorticity analysis}\label{sect curl}
		In this section, we aim to establish the control of  $\nabla\times(\rho_0 u)$ and $\nabla\times(\rho_0 F_j)$ and also their time derivatives.
		\begin{lem}\label{lem_EnergyEsti_curl}
			Under the assumptions of Theorem \ref{main thm, ill data}, we have
			\begin{equation}
				\ddt \left(\|\nabla\times(\rho_0 u)\|_{2,\eps}^2 +\sum_{j=1}^{3}\|\nabla\times(\rho_0 F_j)\|_{2,\eps}^2 \right) \le P(E(t)).
			\end{equation}
		\end{lem}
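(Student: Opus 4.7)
The plan is to promote the $L^2$-style argument of Proposition \ref{L2 energy} to the vorticity level, weighting everything by the equilibrium density $\rho_0=\rho(0,S)$. Using $D_t\rho_0=0$, I first multiply the momentum equation by $\rho_0/\rho$ and use $\rho_0 D_t u=D_t(\rho_0 u)$ to recast the system as
\begin{align*}
D_t(\rho_0 u)&=-\eps^{-1}\nab q+\tfrac{\rho-\rho_0}{\eps\rho}\nab q+\rho_0\sum_{j=1}^3\fjp F_j,\\
D_t(\rho_0 F_j)&=\rho_0\,\fjp u.
\end{align*}
Taking $\nab\times$ annihilates the singular $-\eps^{-1}\nab q$, and the remaining coefficient $\tfrac{\rho-\rho_0}{\eps\rho}$ is uniformly bounded in $\eps$ because $\rho-\rho_0=O(\eps q)$. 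Writing $\omega^u:=\nab\times(\rho_0 u)$ and $\omega^{F_j}:=\nab\times(\rho_0 F_j)$, I then apply $D^\beta:=(\eps\p_t)^k\p^\alpha$ with $k+|\alpha|\le 2$ to the two curl equations and test against $D^\beta\omega^u$ and $D^\beta\omega^{F_j}$ respectively, summing over $j$.

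The boundary conditions $u_3=F_{3j}=\BF_{3j}=0$ on $\Sigma$ make every boundary integral vanish. Since $D_t\rho_0=0$, the time-derivative side produces $\tfrac12\ddt\|D^\beta\omega^u\|_0^2$ plus a harmless lower-order piece $\tfrac12\int(\nab\cdot u)|D^\beta\omega^u|^2\dx$ (bounded since $\nab\cdot u=-\eps aD_t q\in L^\infty$). The decisive algebraic step is the Piola-type identity
\[
\rho_0\,\fjp f=\fjp(\rho_0 f)-f\,\fjp\rho_0,
\]
which, after taking $\nab\times$, decomposes
\[
\nab\times(\rho_0\,\fjp F_j)=\fjp\omega^{F_j}-\nab\times\bigl(F_j\,\fjp\rho_0\bigr)+\text{l.o.t.},
\]
and similarly with $u$ in place of $F_j$. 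The paired top-order transport contributions $\int\fjp D^\beta\omega^{F_j}\cdot D^\beta\omega^u\dx$ and $\int\fjp D^\beta\omega^u\cdot D^\beta\omega^{F_j}\dx$ cancel after one integration by parts (no boundary because $\fj\cdot N=0$), leaving only $\int(\nab\cdot\fj)\,D^\beta\omega^{F_j}\cdot D^\beta\omega^u\dx$, which is controllable via the constraint $\nab\cdot(\rho\fj)=0$ giving $\nab\cdot\fj=-\rho^{-1}\fj\cdot\nab\rho\in L^\infty$.

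The main obstacle is the residual $D^\beta\nab\times(F_j\,\fjp\rho_0)\cdot D^\beta\omega^u$ (and its $u$-analogue paired with $\omega^{F_j}$). Its worst manifestation places three spatial derivatives on $\fjp\rho_0$, which naively costs a fourth derivative of $S$, unavailable in the $H^3$ framework. Here Observation~1 rescues the estimate: since $\rho_0=\rho(0,S)$ depends only on $S$, one has $\fjp\rho_0=\rho_S(0,S)\,\fjp S$, and Corollary~\ref{lem_EnergyEsti_S} gives $\fjp S\in H^3$ uniformly on $[0,T]$. Consequently $\|\fjp\rho_0\|_{3,\eps}\le P(E(t))$, and the residual is absorbed into $P(E(t))$. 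All remaining pieces---the curl of $\tfrac{\rho-\rho_0}{\eps\rho}\nab q$, the commutators $[D^\beta,D_t]$, $[D^\beta,\nab\times]$, $[D^\beta,\fjp]$, and the lower-order factors produced by the Piola decomposition---are standard Moser products bounded by $P(E(t))$ via Sobolev embedding. Summing the resulting differential inequalities over $k+|\alpha|\le 2$ closes the stated estimate.
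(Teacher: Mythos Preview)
Your proposal is correct and follows essentially the same route as the paper's proof: weight by $\rho_0$ so that $D_t(\rho_0 u)$ and $D_t(\rho_0 F_j)$ carry the time derivative, take curl to kill $\eps^{-1}\nab q$, and close a transport estimate in which the cross terms $\fjp\omega^{F_j}\cdot\omega^u$ and $\fjp\omega^u\cdot\omega^{F_j}$ combine via integration by parts along $\fj$ (no boundary since $\fj\cdot N=0$), leaving residuals of the form $D^\beta\nab\times(F_j\,\fjp\rho_0)$ whose top-order piece is absorbed precisely by the enhanced directional regularity $\fjp S\in H^3$ from Corollary~\ref{lem_EnergyEsti_S}. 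The paper organizes the computation slightly differently---it works from the momentum equation alone and reaches the $\omega^{F_j}$ time derivative through a chain of integration by parts and the substitution $\fjp u=D_t F_j$---but the underlying mechanism and the decisive use of Observation~1 are identical.
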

		\begin{proof}
			The momentum equation $\rho D_t u +\eps^{-1}\nabla q =\rho\sum_{j=1}^{3}(F_j+\BF_j)\cdot\nabla F_j  $ can be rewritten as
			\begin{equation}\label{MomentumEqu2}
				D_t(\rho_0 u) -\rho_0 \sum_{j=1}^{3}(F_j+\BF_j)\cdot\nabla F_j=-\frac{1}{\eps}\nabla q  + \frac{\rho-\rho_0}{\eps \rho}\nabla q.
			\end{equation}
			There exists a smooth function $g$ such that
			\[
			\frac{\rho-\rho_0}{\rho}=\eps g(\eps q,S),\quad \|g\|_{3,\eps} \le P(E(t)).
			\]
			We take $\nab\times$ in \eqref{MomentumEqu2} to get the evolution equation
			\begin{align}\label{curlv eq}
				D_t( \nabla\times(\rho_0 u) ) -\sum_{j=1}^{3}\nabla\times\left( \rho_0 (F_j+\BF_j)\cdot\nabla F_j \right) =[D_t,\nabla\times](\rho_0 u) + \nabla g \times\nabla q,
			\end{align}where we notice that the right side only contains the first-order derivatives and does not lose Mach number weight. Note that the equation of state is smooth, so $\p_q \rho$ and $\p_S \rho$ are bounded.

			Since $\|f\|^2_{2,\eps}=\sum_{k=0}^{2}\|(\eps\p_t)^k f\|_{2-k}^2$, we first prove the case when $k=0$. Indeed, the cases for $k=1,2$ follow in the same manner. In order to control the $H^2$ norms of the curl part, we take $\p^2$ in \eqref{curlv eq} to get 
			\begin{align}\label{curlv H3 eq}
				D_t(\p^2 \nabla\times(\rho_0 u) ) -\sum_{j=1}^{3}\p^2\nabla\times\left( \rho_0 (F_j+\BF_j)\cdot\nabla F_j \right)=   \underbrace{\p^2(\text{RHS of }\eqref{curlv eq})+[D_t,\p^2](\nab\times (\rho_0 u)) }_{\RR_1},
			\end{align}where the order of derivatives on the right side must be $\leq 3$. Now, standard $L^2$-type estimate yields that
			\begin{align}
				&\frac{1}{2}\ddt \int_{\Omega} |\p^2\nab\times (\rho_0 u)|^2 \dx=\int_{\Omega} (\p_t \p^2 \nab\times (\rho_0 u) )\cdot \p^2 \nab\times (\rho_0 u) \dx  \notag\\
				=&\int_{\Om} D_t \p^2 \nab\times (\rho_0 u) \cdot\p^2\nab\times (\rho_0 u) \dx \underbrace{- \int_{\Om}(u\cdot\nab) \p^2\nab\times (\rho_0 u)\cdot \p^2\nab \times (\rho_0 u) \dx }_{:=\mathcal{I}_1},
			\end{align}where $\mathcal{I}_1$ can be directly controlled by integrating by parts and using the symmetry
			\begin{align}
				|\mathcal{I}_1|=\frac12\left|\io(\nab\cdot u)|\p^2\nab \times (\rho_0 u)|^2\dx\right| \leq P(E(t)).
			\end{align}
			
			Then invoking \eqref{curlv H3 eq} gives us the following terms
			\begin{align}
				&\int_{\Om}D_t \p^2 \nab\times (\rho_0 u) \cdot\p^2\nab\times (\rho_0 u) \dx =\int_{\Om}\sum_{j=1}^{3} \p^2 \nabla\times\left( \rho_0 (F_j+\BF_j)\cdot\nabla F_j \right) \cdot \p^2 \nab\times (\rho_0 u) \dx  \underbrace{+ \int_{\Om}  \RR_1\cdot\p^2 \nab\times (\rho_0 u)\dx }_{:=\mathcal{I}_2}\notag\\
				=&\int_{\Om}\sum_{j=1}^{3}\left(((F_j+\BF_j)\cdot\nab) \p^2 \nab\times (\rho_0 F_j)\right)\cdot\p^2 \nab\times (\rho_0 u) \dx +\mathcal{I}_2 \notag\\
				&\underbrace{+\int_{\Om}\sum_{j=1}^{3} \left( [\p^2\nabla\times,(F_j+\BF_j)\cdot\nabla](\rho_0 F_j) - [\p^2\nabla\times, F_j]((F_j+\BF_j)\cdot\nabla\rho_0)   \right)\cdot\p^2 \nab\times (\rho_0 u) \dx}_{:=\mathcal{I}_3}  \notag\\
				&\underbrace{-\int_{\Om}\sum_{j=1}^{3} \left( \p^2\nabla( (F_j+\BF_j)\cdot\nabla\rho_0 ) \times F_j   \right)\cdot\p^2 \nab\times (\rho_0 u) \dx}_{:=\mathcal{I}_4}  .
			\end{align}
			Now, we integrate by parts the tangential derivative $\fjp$ to get 
			\begin{align}
				&\int_{\Om}\sum_{j=1}^{3}\left(((F_j+\BF_j)\cdot\nab) \p^2 \nab\times (\rho_0 F_j)\right)\cdot\p^2 \nab\times (\rho_0 u) \dx \notag \\
				=&-\int_{\Om}\sum_{j=1}^{3}(\p^2\nab\times (\rho_0 F_j)) \cdot \p^2\nab\times (((F_j+\BF_j)\cdot\nab) (\rho_0u)) \dx \underbrace{- \int_{\Om}\sum_{j=1}^{3}(\p^2 \nab\times (\rho_0 F_j)) \cdot[(F_j+\BF_j) \cdot\nab, \p^2 \nab\times](\rho_0u) \dx}_{:=\mathcal{I}_5} \notag\\
				=&-\int_{\Om}\sum_{j=1}^{3}(\p^2\nab\times (\rho_0 F_j)) \cdot \p^2\nab\times ( \rho_0 (F_j+\BF_j)\cdot\nab u ) \dx +\mathcal{I}_5\notag\\
				&\underbrace{-\int_{\Om} \sum_{j=1}^{3} (\p^2\nab\times (\rho_0 F_j)) \cdot \left( \p^2\nabla((F_j+\BF_j) \cdot\nabla \rho_0) \times u  + [\p^2\nabla\times, u]((F_j+\BF_j)\cdot\nabla \rho_0)  \right) \dx }_{:=\mathcal{I}_6}
			\end{align}
			Next, we insert $D_t F_j=(F_j+\BF_j)\cdot\nabla u$ to get
			\begin{align}
				&-\int_{\Om}\sum_{j=1}^{3}(\p^2\nab\times (\rho_0 F_j)) \cdot \p^2\nab\times (\rho_0 (F_j+\BF_j)\cdot\nab u) \dx =-\int_{\Om}\sum_{j=1}^{3}\p^2 \nab\times (\rho_0 F_j) \cdot \p^2 \nab\times D_t (\rho_0 F_j) \dx  \notag\\
				=&-\frac{1}{2}\ddt\int_{\Omega} \sum_{j=1}^{3}|\p^2\nab\times (\rho_0 F_j)|^2 \dx \underbrace{- \int_{\Om} \p^2\nab\times (\rho_0 F_j) \cdot ([\p^2\nab\times, D_t](\rho_0 F_j) + (u\cdot\nab)\p^2\nab\times (\rho_0 F_j) ) \dx}_{:=\mathcal{I}_7}.
			\end{align}
			
			Based on the concrete forms of the commutators in Lemma \ref{commutators}, a straightforward product estimate for $\mathcal{I}_j$ $(2\le j\le 7)$ and the estimate for $\mathcal{I}_1$ gives us
			\begin{align}
				\sum_{j=1}^{7}\mathcal{I}_j \le P(E(t)),
			\end{align}
			which gives us the energy estimate
			\begin{align}
				\ddt \left(\|\nab\times (\rho_0 u)\|_2^2 +\sum_{j=1}^{3}\|\nab\times (\rho_0 F_j)\|_2^2\right)\leq P(E(t)).
			\end{align}

			Similarly, we can prove the same conclusion for $\p^{\alpha}(\eps\p_t)^k$ with $k+|\alpha|=2$ by replacing $\p^2$ with $\p^{\alpha}(\eps\p_t)^k$. Indeed, the highest order derivatives in the above commutators do not exceed 3-th order, and there is no loss of Mach number weight because none of the above steps creates negative power of Mach number. Hence, we can conclude that
			\begin{align}
				\sum_{k=0}^{2}\ddt \left(\left\|(\eps\p_t)^k\nab\times  (\rho_0 u)\right\|_{2-k}^2 +\sum_{j=1}^{3}\left\|(\eps\p_t)^k\nab\times (\rho_0 F_j)\right\|_{2-k}^2\right) \leq P(E(t)).
			\end{align}
		\end{proof}

		We proceed to derive the estimates of the curl parts $\nabla\times u$ and $\nabla\times F_j$.
		\begin{cor}\label{lem_Esti_curl}
			For $k=0,1,2$, under the assumptions of Theorem \ref{main thm, ill data}, we have
			\begin{align}
				\|(\eps\p_t)^k\nabla\times u\|_{2-k}^2  \le& \left( 1 + \sum_{l=0}^{k}\|(\eps\p_t)^l u\|_{2-k}^2  \right) P(E_1(t)),\\
				\|(\eps\p_t)^k\nabla\times F_j\|_{2-k}^2  \le& \left( 1+ \sum_{l=0}^{k}\|(\eps\p_t)^lF_j\|_{2-k}^2  \right) P(E_1(t)).
			\end{align}
		\end{cor}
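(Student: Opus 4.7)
The plan is to pass from the curl of $\rho_0 u$ (already controlled by $E_1(t)$ in Lemma \ref{lem_EnergyEsti_curl}) to the curl of $u$ via the pointwise algebraic identity
\begin{equation}\label{plan:id}
\nabla\times u \;=\; \rho_0^{-1}\nabla\times(\rho_0 u) \;-\; \rho_0^{-1}(\nabla\rho_0)\times u,
\end{equation}
which is legitimate because $\rho_0=\rho(0,S)\ge \bar\rho_0>0$. The game is then to show that all factors involving $\rho_0$ are absorbed into $P(E_1(t))$, leaving on the right only $\nabla\times(\rho_0 u)$-type quantities (which already lie in $E_1$) plus the bracket of $\|(\eps\p_t)^l u\|_{2-k}^2$ that appears in the statement.

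The key input is that $\rho_0$ depends smoothly only on $S$, and $\|S\|_{3,\eps}$ is built into $E_1(t)$. Combined with the identity $\eps\p_t S=-\eps\, u\cdot\nabla S$ derived from $D_t S=0$, Moser composition yields, for every $0\le j\le 2$ and every $0\le s\le 2$,
$$
\|(\eps\p_t)^j \rho_0^{-1}\|_{s} + \|(\eps\p_t)^j(\rho_0^{-1}\nabla\rho_0)\|_{s} \;\le\; P(E_1(t)),
$$
with no loss of $\eps$-weight. Applying $(\eps\p_t)^k\p^\beta$ with $|\beta|\le 2-k$ to \eqref{plan:id} and Leibniz-expanding, each resulting term has the form (coefficient derivative of total order $\le 2$) times (derivative of $\nabla\times(\rho_0 u)$ or of $u$ of complementary order). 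A routine product estimate in $H^{2-k}(\Om)$ in three space dimensions, using the algebra property of $H^2$ and the embeddings $H^2\hookrightarrow L^\infty$, $H^1\hookrightarrow L^6$ when regularity drops, then bounds each term by $P(E_1(t))\bigl(\|(\eps\p_t)^l\nabla\times(\rho_0 u)\|_{2-l}+\|(\eps\p_t)^l u\|_{2-k}\bigr)$ for some $0\le l\le k$. Since $\|(\eps\p_t)^l\nabla\times(\rho_0 u)\|_{2-l}^2$ is a component of $E_1(t)$, it is absorbed into $P(E_1(t))$, and what remains feeds precisely the stated bracket. Squaring gives the first inequality, and the identical argument with $F_j$ in place of $u$ gives the second.

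The main (essentially the only) obstacle is bookkeeping the Moser estimates when $k=2$, because there the $L^2$-level target norm forces a product of type $H^1\cdot H^2\hookrightarrow L^2$ (the coefficient $(\eps\p_t)^2\rho_0^{-1}$ only controls an $H^1$-norm, while $\nabla\times(\rho_0 u)$ controls up to $H^2$). This is handled by the three-dimensional Sobolev embeddings recalled above and costs nothing but constants. No new energy is introduced, so the corollary follows directly from the identity \eqref{plan:id} together with Lemma \ref{lem_EnergyEsti_curl} and Corollary \ref{lem_EnergyEsti_S}.
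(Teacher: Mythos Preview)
Your proposal is correct and follows essentially the same route as the paper: both start from the identity $\nabla\times u=\rho_0^{-1}\nabla\times(\rho_0 u)-\rho_0^{-1}\nabla\rho_0\times u$, then control the $\rho_0$-factors through $\|S\|_{3,\eps}\subset E_1(t)$ and product/Moser estimates. One small caveat: your displayed bound $\|(\eps\p_t)^j\rho_0^{-1}\|_s\le P(E_1(t))$ is literally false for $j=0$ since $\rho_0^{-1}$ is not in $L^2(\R^3_-)$; what you actually need (and what your subsequent product-estimate discussion correctly uses) is an $L^\infty$/multiplier-type bound on $\rho_0^{-1}$ and its spatial derivatives, which does follow from $S\in H^3$.
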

		\begin{proof}
			Since $\nabla\times u= \rho_0^{-1}( \nabla\times(\rho_0 u) -\nabla\rho_0 \times u )$, the curl part of $u$ controled by
			\begin{align}
				\|(\eps\p_t)^k\nabla\times u\|_{2-k}^2  \le& ~\|(\eps\p_t)^k(\rho_0^{-1}\nabla\times(\rho_0 u))\|_{2-k}^2  + \|(\eps\p_t)^k(\rho_0^{-1}\nabla\rho_0\times u)\|_{2-k}^2  \notag\\
				\lesssim & ~\|S\|_{2,\eps}^2  \|\nabla\times(\rho_0 u)\|_{2,\eps}^2  + \left(\sum_{l=0}^{k}\|(\eps\p_t)^l u\|_{2-k}^2\right) P(\|S\|_{3,\eps})\notag\\
				\le& ~\left( 1+ \sum_{l=0}^{k}\|(\eps\p_t)^l u\|_{2-k}^2 \right)P(E_1(t)).
			\end{align}
			Similarly, we obtain
			\begin{equation}
				\|(\eps\p_t)^k\nabla\times F_j\|_{2-k}^2  \le \left( 1+ \sum_{l=0}^{k}\|(\eps\p_t)^lF_j\|_{2-k}^2  \right) P(E_1(t)).
			\end{equation}
			
		\end{proof}
		
		\begin{rmk}\label{rmk_Esti_curl}
			It should be noted that the conclusion of this corollary is not the end of the reduction, as there are still normal derivatives in $\|(\eps\p_t)^l (u, F_j)\|_{2-k}^2$ for $0\leq k\leq 2,~0\leq l\leq k$. But these terms are lower order terms and can be reduced to the control of divergence and the full time derivatives $\|(\eps\p_t)^k (u, F_j)\|_{0}^2,~0\leq k\leq 3$ (which is a part of $E_1(t)$) by repeatedly applying the div-curl decomposition.
		\end{rmk}
		
		\subsection{Control of divergence and reduction of pressure}\label{sect div}
		Next, we are going to derive the estimate of $\nabla q$, $\nabla\cdot u$, $\nabla\cdot F_j$ as well as their time derivatives. 
		\begin{prop}\label{lem_Esti_div}
			For $k=0,1,2$, under the assumptions of Theorem \ref{main thm, ill data}, we have for any $\delta\in(0,1)$ that
			\begin{align}
				\|(\eps\p_t)^k\nabla q\|_{2-k}^2 + \|(\eps\p_t)^k\nabla \cdot u\|_{2-k}^2  + \sum_{j=1}^3\|(\eps\p_t)^k\nabla \cdot F_j\|_{2-k}^2  \leq \delta E(t) + P(E(0))+ P(E(t))\int_0^t P(E(\tau))\dtau.
			\end{align}
		\end{prop}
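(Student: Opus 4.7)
The plan is to exploit the wave equation satisfied by the pressure $q$, namely
\begin{equation*}
\eps^2 a D_t^2 q - \nab\cdot(\rho^{-1}\nab q) - \sum_{j=1}^3 \eps^2 a(\fjp)^2 q = \mathcal{G}^\eps,
\end{equation*}
which is obtained by taking $D_t$ of the continuity equation $a D_t q + \eps^{-1}\nab\cdot u = 0$ and substituting the momentum equation; the source $\mathcal{G}^\eps$ collects quadratic terms and satisfies $\|\eps^{-1}\mathcal{G}^\eps\|_2 \le P(E(t))$. The associated boundary condition is the Neumann condition $\p_3 q = 0$ on $\Sigma$, which follows from evaluating the third component of the momentum equation on $\Sigma$ and using $u_3 = F_{3j} = \bar F_{3j} = 0$ there. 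The overall strategy is to trade normal derivatives for tangential ones ($D_t$ and $\fjp$, both tangential to $\Sigma$ because $u_3 = F_{3j} = \bar F_{3j} = 0$), so that the wave equation can then be differentiated solely by tangential operators and energy-estimated.

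First, I would apply the div-curl inequality (Lemma \ref{lem divcurl}) to $\nab q$, $\eps D_t q$ and $\eps \fjp q$ in the form
\begin{equation*}
\|\nab w\|_{2}^2 \lesssim \|\nab w\|_0^2 + \|\lap w\|_1^2 + \|\nab\times \nab w\|_1^2 + |\p_3 w|_{1.5}^2,
\end{equation*}
observing that for $w = q$ the boundary term vanishes by the Neumann condition, whereas for $w = \eps D_t q$ and $w = \eps\fjp q$ the boundary term reduces via the commutators $[D_t,\p_3]$ and $[\fjp,\p_3]$ (the latter vanishes on $\Sigma$ because $F_{3j} = 0$ there) to purely lower-order traces controlled by $E_1$ through trace inequalities and absorbed using weighted Young's inequality into $\delta E(t)$. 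The Laplacians are then rewritten through the wave equation as
\begin{equation*}
\nab\cdot(\rho^{-1}\nab q) = \eps^2 a D_t^2 q - \sum_j \eps^2 a(\fjp)^2 q - \mathcal{G}^\eps,
\end{equation*}
so that $\lap q$, $\eps\lap D_t q$, $\eps\lap \fjp q$ become exactly the tangential quantities listed in \eqref{reduce div intro}, modulo commutators with $\rho^{-1}$, $a$ and the coefficients, all of which are controlled by $E_1(t)$.

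Second, I would establish $L^2$ estimates for the quantities in \eqref{reduce div intro} by differentiating the wave equation by $\eps^{k+1} D_t^{k}(\fjp)^{2-k}$ for $k = 0,1,2$, multiplying by $\p_t$ of the same expression applied to $q$, and integrating by parts in the usual wave-type manner. The Neumann condition on $q$ is preserved at each step (since the tangential operators $D_t$ and $\fjp$ commute with $\p_3$ up to lower-order commutators on $\Sigma$), which kills the boundary integrals produced by integration by parts on $\nab\cdot(\rho^{-1}\nab q)$. The resulting inequality has the form $\ddt \mathcal{E}(t) \le P(E(t))$ for a suitable tangentially weighted energy $\mathcal{E}(t)$ equivalent to the square of the left-hand side of \eqref{reduce div intro}; integrating in time yields the contribution $P(E(0)) + \int_0^t P(E(\tau))\dtau$ of the target bound.

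Finally, $\nab\cdot u$ is handled directly by the continuity equation $\nab\cdot u = -\eps a D_t q$, which reduces $\|(\eps\p_t)^k \nab\cdot u\|_{2-k}$ to the already controlled norms of $\eps D_t q$. For $\nab\cdot F_j$, I would use the constraint $\nab\cdot(\rho(F_j + \bar F_j)) = 0$, which reads
\begin{equation*}
\rho\,\nab\cdot F_j = -\eps\,\rho_p\,(F_j+\bar F_j)\cdot\nab q \;-\; \rho_S\,(F_j+\bar F_j)\cdot\nab S.
\end{equation*}
The first piece carries an explicit $\eps$ and is absorbed by the $q$-estimates; the second piece is exactly $-\rho_S\,\fjp S$, whose $H^3$ regularity is provided by Corollary \ref{lem_EnergyEsti_S}. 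This is precisely the place where the enhanced directional regularity of the entropy pays off, and it is the main obstacle, since without it there would be no way to close the $H^2$ bound on $\nab\cdot F_j$ in the non-isentropic setting. The remaining bookkeeping — tracking weights of $\eps$ through the many commutators, and ensuring that borrowed $\delta$-small multiples of top-order norms land on terms that can be absorbed into $\delta E(t)$ rather than on genuinely uncontrolled quantities — is routine but delicate, and completes the proof.
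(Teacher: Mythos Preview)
Your proposal is correct and follows essentially the same route as the paper: derive the wave equation for $q$ with Neumann data, use the div-curl inequality to trade $\|\nab q\|_2$, $\|\eps\nab D_t q\|_1$, $\|\eps\nab\fjp q\|_1$ for Laplacians, rewrite the Laplacians as tangential quantities via the wave equation, and close via energy estimates on the $D_t^k(\fjp)^{2-k}$-differentiated wave equations; $\nab\cdot u$ and $\nab\cdot F_j$ are then read off from the continuity equation and the constraint $\nab\cdot(\rho(F_j+\BF_j))=0$, the latter using the enhanced directional regularity of $S$. Two minor points of bookkeeping the paper makes explicit and you only allude to: (i) it is technically cleaner to first prove the estimate with $(\eps D_t)^k$ in place of $(\eps\p_t)^k$ and then pass back, and (ii) the tangential wave estimate is not quite $\ddt\mathcal{E}\le P(E)$ --- a cross term of the type $\int\rho^{-1}\eps^2[D_t^2,\nab]q\cdot\eps^2 D_t\nab D_t^2 q$ must be integrated by parts in $D_t$, producing the $\delta E(t)$ and the $P(E(t))\int_0^t P(E(\tau))\dtau$ structure rather than a pure time integral.
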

		We get from the continuity equation and the divergence constraint of $F_j$ in \eqref{CElasto3} that
		\begin{align}
			\label{eq nabq}	-\nabla q=&~ \eps \rho D_t u -\eps \rho \sum_{j=1}^{3}(F_j +\BF_j)\cdot\nabla F_j,\\
			\label{eq divu}	-\nabla\cdot u=& ~\eps a D_t q ,\\
			\label{eq divF}	-\nabla\cdot F_j=& ~\rho^{-1} (F_j+\BF_j)\cdot\nabla \rho =\rho^{-1}\left[ \eps\p_q \rho(F_j+\BF_j)\cdot\nabla q + \p_S \rho(F_j+\BF_j)\cdot\nabla S \right] \notag\\
			\overset{a=\rho^{-1}\p_q \rho}{===}&~\eps a \fjp q + b(F_j+\BF_j)\cdot\nabla S 
		\end{align}where $b=b(\eps q, S):=\frac{1}{\rho}\frac{\p \rho}{\p S}$ is a smooth function in its arguments.
		
		The control of  $\| b(F_j+\BF_j)\cdot\nabla S \|_{2,\eps}$ is straightforward
		\[
		\ddt\| b(F_j+\BF_j)\cdot\nabla S \|_{2,\eps}^2\leq P(E(t)).
		\] thanks to Corollary \ref{lem_EnergyEsti_S}. For the terms $\eps D_t q,~\nab q$ and $\eps \fjp q$, their $\|\cdot\|_{2,\eps}^2$ norms can be controlled by $P(E_1(t))+C\eps E(t)$, and the smallness of $\eps>0$ is used to absorb $C\eps E(t)$ when closing the energy estimates, as shown in previous works \cite{Schochet1986limit, Metivier2001limit, Alazard2005limit} about the low Mach number limit of non-isentropic Euler equations. In this paper, we would like to drop the dependence on the smallness of $\eps$. 
		
		It should be noted that $\eps D_t =\eps \p_t +\eps u\cdot\nab$, so we can alternatively try to obtain the bounds for $$\|(\eps D_t)^k\nabla q\|_{2-k}^2, \|(\eps D_t)^k\nabla \cdot u\|_{2-k}^2, \|(\eps D_t)^k\nabla \cdot F_j\|_{2-k}^2, $$ which is more technically convenient in the analysis of pressure and divergence. 
		\begin{prop}\label{lem_Esti_div_1}
			For $k=0,1,2$, under the assumptions of Theorem \ref{main thm, ill data}, we have for any $\delta\in(0,1)$ that
			\begin{align}
				\|(\eps D_t)^k\nabla q\|_{2-k}^2 + \|(\eps D_t)^k\nabla \cdot u\|_{2-k}^2  + \sum_{j=1}^3 \|(\eps D_t)^k\nabla \cdot F_j\|_{2-k}^2  \leq \delta E(t) + P(E(0))+ P(E(t))\int_0^t P(E(\tau))\dtau.
			\end{align}
		\end{prop}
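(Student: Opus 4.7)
The plan is to follow the roadmap of Section~\ref{sect overview intro}: reduce $\nabla q$, $\nabla\cdot u$, and $\nabla\cdot F_j$ together with their $(\eps D_t)^k$ derivatives to tangential-only quantities in $q$ via the momentum/continuity/divergence equations, then apply the div-curl inequality together with the wave equation for $q$ to convert normal derivatives into material derivatives $D_t$ and directional derivatives $\fjp$, and finally close energy estimates on those tangential quantities using the wave equation itself. First, \eqref{eq nabq}, \eqref{eq divu}, and \eqref{eq divF} express the three quantities in terms of $\eps\rho D_t u$, $\eps\rho\fjp F_j$, $\eps a D_t q$, $\eps a\fjp q$, and $b(F_j+\BF_j)\cdot\nabla S$; the last piece is directly bounded via Corollary~\ref{lem_EnergyEsti_S}. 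It therefore suffices to control $\|(\eps D_t)^k\nabla q\|_{2-k}$, $\|(\eps D_t)^{k+1} q\|_{2-k}$, and $\|(\eps D_t)^k(\eps\fjp q)\|_{2-k}$ for $k=0,1,2$.

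For each of these I would apply Lemma~\ref{lem divcurl}. The curl of any gradient vanishes, and the normal boundary trace $\p_3 q|_\Sigma$ also vanishes: taking the third component of \eqref{CElasto3}$_2$ on $\Sigma$, combined with $u_3=F_{3j}=\BF_{3j}=0$, forces $\p_3 q|_\Sigma=0$. Hence the div-curl bound collapses to $\|\nabla q\|_2^2\lesssim\|\nabla q\|_0^2+\|\lap q\|_1^2$, and analogously (up to lower-order commutator boundary pieces $|\eps[D_t,\p_3]q|_{\frac{1}{2}}$ and $|\eps[\fjp,\p_3]q|_{\frac{1}{2}}$, which are absorbed into $P(E_1(t))$) for the $\eps D_t q$ and $\eps\fjp q$ families. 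Solving the wave equation
\begin{equation*}
\eps^2 a D_t^2 q - \nabla\cdot(\rho^{-1}\nabla q) - \sum_{j=1}^{3}\eps^2 a(\fjp)^2 q = \mathcal{G}^\eps
\end{equation*}
for $\lap q$ and using Lemma~\ref{lem_Trans Equ} ($[D_t,\fjp]=0$), every Laplacian acquires an $\eps^2$-weight and reduces to the purely tangential derivatives listed in \eqref{reduce div intro}.

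The final step is to bound those tangential $L^2$ quantities through energy estimates on the wave equation itself: for each tangential operator $\mathcal{T}$ built from $D_t$ and $\fjp$ of total order $\leq 3$, I apply $\eps^\alpha\mathcal{T}$ to the wave equation with the weight $\alpha$ dictated by \eqref{reduce div intro}, pair the result with a matching tangential derivative of $q$, and integrate by parts. The Neumann condition $\p_3 q|_\Sigma=0$ together with the degeneracy $F_{3j}|_\Sigma=\BF_{3j}=0$ ensures that all the resulting boundary integrals vanish identically. The outcome is a $\ddt$-identity of the form $\ddt\{\text{top-order wave energy in }q\}\leq P(E(t))$; time-integrating from $0$ and using Young's inequality to absorb an arbitrarily small multiple $\delta E(t)$ produces the claimed bound modulo $P(E(0))+P(E(t))\int_0^t P(E(\tau))\dtau$.

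The main obstacle I anticipate is the careful weight-and-commutator bookkeeping in this last step: every integration by parts must be tangential (a normal integration by parts would generate uncontrollable boundary data and strip an $\eps$-weight), and each commutator of $\mathcal{T}$ with the wave operator must be organized so that no negative power of $\eps$ is produced. Coefficient derivatives such as $\nabla\rho=\eps\p_q\rho\,\nabla q+\p_S\rho\,\nabla S$ need to be arranged so that each free $\nabla q$ is paired with an $\eps$-factor and each $\nabla S$ is absorbed using the enhanced directional regularity $\fjp S\in H^3$ established in Section~\ref{sect entropy}. This weight-preserving bookkeeping is precisely what allows the estimate to close \emph{uniformly} in $\eps$ without any smallness assumption on the Mach number.
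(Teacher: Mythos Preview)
Your proposal is correct and follows essentially the same approach as the paper: reduce via \eqref{eq nabq}--\eqref{eq divF}, apply the div-curl inequality with the Neumann condition $\p_3 q|_\Sigma=0$, convert $\lap q$ to tangential derivatives through the wave equation, and close by energy estimates on the tangentially-differentiated wave equation (the paper's Lemmas~\ref{lem wave L2} and \ref{lem wave H3}). One minor correction: the boundary commutators $|\eps[D_t,\p_3]q|_{\frac12}$ and $|\eps[\fjp,\p_3]q|_{\frac12}$ are not absorbed into $P(E_1(t))$---since $E_1$ contains no $\|\nabla q\|_1$-type norm---but rather into the $\delta E(t)+P(E(0))+P(E(t))\int_0^t P(E(\tau))\dtau$ portion via Corollary~\ref{cor product} and a time-integration of the $\eps$-weighted factor, which is consistent with the final form you already anticipate.
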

		
		In fact, we can prove the conclusion of Proposition \ref{lem_Esti_div_1} implies the conclusion of Proposition \ref{lem_Esti_div}.
		\begin{proof}[Proof of ``Prop. \ref{lem_Esti_div_1} $\Rightarrow$ Prop. \ref{lem_Esti_div}"]
			It suffices to consider the case $k=1,2$. For $k=1$, we have $\eps D_t (\nab q)-\eps \p_t (\nab q)=\eps (u\cdot\nab) \nab q$. So, we have
			\begin{align*}
				&\|\eps D_t(\nab q)-\eps \p_t(\nab q)\|_1^2=\|\eps (u\cdot\nab) \nab q\|_1^2\lesssim \|\eps u\|_2^2\|q\|_3^2.
			\end{align*}Using the result for $k=0$ (remember that we are assuming the conclusion of Proposition \ref{lem_Esti_div_1} holds at this step), we get
			\[
			\|q\|_3^2\leq \delta E(t) + P(E(0))+ P(E(t))\int_0^t P(E(\tau))\dtau.
			\] For $\|\eps u\|_2^2=\io |\eps u|^2 + |\eps\p u|^2+ |\eps \p^2 u|^2\dx$, using AM-GM inequality and Jensen's inequality, we get for $j=0,1,2$ that
			\begin{align*}
				\io |\eps\p^j u|^2\dx=&\io\left(\eps\p^j u_0 + \int_0^t \eps\p^j \p_t u (\tau,\cdot)\dtau\right)^2\dx\lesssim \io |\eps\p^j u_0|^2\dx + \io\left( \int_0^t \eps \p_t\p^j u (\tau,\cdot)\dtau\right)^2\dx\\
				\lesssim&\io |\eps\p^j u_0|^2\dx+\io \int_0^t |\eps \p_t\p^j u (\tau,\cdot)|^2\dtau\dx\\
				=&\io |\eps\p^j u_0|^2\dx+\int_0^t\io  |\eps \p_t\p^j u (\tau,\cdot)|^2\dx\dtau,
			\end{align*}and thus
			\[
			\|\eps u\|_2^2\lesssim \eps^2\|u_0\|_2^2+\int_0^t \|\eps\p_t u(\tau,\cdot)\|_2^2 \dtau \leq \eps^2 E(0) +\int_0^t E(t)\dt
			\]
			Combining these inequalities, we get
			\begin{align*}
				\|\eps D_t(\nab q)-\eps \p_t(\nab q)\|_1^2\lesssim~ \delta E(t) + P(E(0))+ P(E(t))\int_0^t P(E(\tau))\dtau.
			\end{align*}Similarly, we can prove that
			\[
			\|\eps D_t(\nab \cdot u)-\eps \p_t(\nab \cdot u)\|_1^2+ \sum_{j=1}^3\|\eps D_t(\nab \cdot F_j)-\eps \p_t(\nab \cdot F_j)\|_1^2\lesssim \delta E(t) + P(E(0))+ P(E(t))\int_0^t P(E(\tau))\dtau.
			\]
			When $k=2$, we have 
			\begin{align*}
				\eps^2 D_t^2\nab q -\eps^2\p_t^2 \nab q = \eps^2 (\p_t u_i)(\p_i q) + 2\eps^2 u_i\p_i\p_t q + \eps^2 u_i(\p_i u_j)(\p_j  q) +\eps^2 u_iu_j\p_i\p_j q.
			\end{align*}Using Corollary \ref{cor product} and Jensen's inequality as above, we have
			\begin{align*}
				&\|\eps^2 u_i\p_i\p_t q \|_0^2\leq \delta\|\eps \nab\p_t q\|_1^2+\|\eps u\|_1^4\|\eps \nab\p_t q\|_0^2\\
				\lesssim&~\delta E(t) + \|\eps \nab\p_t q\|_0^2\left(\eps^4\|u_0\|_1^4+\int_0^t\|\eps \p_t u(\tau,\cdot)\|_1^4\dtau\right)\\
				\lesssim&~\delta E(t) + E(t)\int_0^t P(E(\tau))\dtau +\eps^2(E(0))^2\|\eps^2\p_t q\|_1^2\\
				\lesssim&~\delta E(t) + E(t)\int_0^t P(E(\tau))\dtau +\eps^2(E(0))^2\left(\|\eps^2\p_t q(0,\cdot)\|_1^2+\int_0^t\|(\eps \p_t)^2 q(\tau,\cdot)\|_1^2\dtau\right)\\
				\lesssim&~\delta E(t) + P(E(0))+  (1+E(t))\int_0^t P(E(\tau))\dtau.
			\end{align*}The other terms can be analyzed in the same manner and we do not repeat the details here.
		\end{proof}
		
		In the rest of this section, we are devoted to proving Proposition \ref{lem_Esti_div_1}.
		
		\subsubsection{Derivation of the wave equation of $q$}
		For compressible inviscid fluids, the pressure $q$ satisfies a wave-type equation and we now derive the concrete form of the wave equation. Taking $D_t$ in the continuity equation $\eps a D_t q + \nab \cdot u=0$, inserting the concrete form of $[\p,D_t]$ and using $D_t S=0$, we get
		\begin{align}\label{wave eq 0}
			\eps a D_t^2 q + \nab\cdot D_t u = -(\eps D_t q) D_t a +\p_i u_j\,\p_j u_i = \p_i u_j\, \p_j u_i - \frac{\p a}{\p q}(\eps D_t q)^2.
		\end{align}Inserting the momentum equation $D_t u = -(\eps\rho)^{-1}\nab q +\sum_j \fjp F_j$, the term $ \nab\cdot D_t u$ becomes
		\begin{align*}
			\nab\cdot D_t u =& -\eps^{-1}\nab\cdot(\rho^{-1}\nab q) + \sum_{j=1}^3\nab\cdot(\fjp F_j) \\
			=& -\eps^{-1}\nab\cdot(\rho^{-1}\nab q) + \sum_{j=1}^3 \fjp(\nab\cdot F_j) + \p_i F_{kj}\, \p_k F_{ij}
		\end{align*}Inserting \eqref{eq divF}, we obtain
		\begin{align*}
			\fjp(\nab\cdot F_j)=&-\eps a (\fjp)^2 q - b (\fjp)^2S \\
			&-\eps^2\p_q a|\fjp q|^2- \eps (\p_S a + \p_q b)(\fjp q) (\fjp S) - \p_S b |\fjp S|^2.
		\end{align*}
		Plugging the expressions of $\nab\cdot D_t u$ back to \eqref{wave eq 0}, we obtain the wave-type equation of $q$ with Neumann boundary condition (obtained by restricting the third component of momentum equation onto $\Sigma$)
		\begin{equation}\label{wave eq q1}
			\begin{cases}
				\eps^2 a D_t^2 q - \nab \cdot (\rho^{-1}\nab q) - \sum\limits_{j=1}^3\eps^2 a (\fjp)^2 q = \mathcal{G}^\eps&\text{ in }\Om,\\
				\p_3 q=0&\text{ on }\Sigma,
			\end{cases}
		\end{equation}where the source term $\mathcal{G}^\eps$ consists of the following terms
		\begin{align}
			\label{wave source 1}\mathcal{G}^\eps:=&\sum\limits_{j=1}^3\eps \left(b (\fjp)^2S + \p_S b |\fjp S|^2 + \p_i u_j\, \p_j u_i - \p_i F_{kj}\, \p_k F_{ij} \right)\\
			&+\eps^2 (\p_S a + \p_q b)(\fjp q) (\fjp S) +\eps^3\p_q a \left(|\fjp q|^2-(D_t q)^2\right).\notag
		\end{align}This formulation of wave equation will be used to establish the wave-type estimates, as it is straightforward to see that the source term $\mathcal{G}^\eps$ satisfies the following uniform bound
		\begin{align}\label{uniform G}
			\|\eps^{-1}\mathcal{G}^\eps\|_{2,\eps}\lesssim  P(E(t)).
		\end{align}  In particular, we immediately obtain uniform $L^2(\Om)$ estimates for $\eps D_t q, \nab q$ and $\eps\fjp q$.
		\begin{lem}[Uniform $L^2(\Om)$ estimate of the wave equation]\label{lem wave L2}
			Define 
			\begin{align}\label{energy W0}
				\W_0(t):=\frac12\io a |\eps D_t q|^2 + \rho^{-1} |\nab q|^2 + \sum_{j=1}^3 a |\eps\fjp q|^2\dx.
			\end{align} Then it satisfies $\dfrac{\mathrm{d}\W_0(t)}{\dt}\leq P(E(t))$.
		\end{lem}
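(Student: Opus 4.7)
The plan is to carry out a standard wave-energy computation on \eqref{wave eq q1}: multiply by $D_t q$ and integrate over $\Om$, arrange the three principal terms so that $\ddt \W_0$ emerges, and bound every remainder by $P(E(t))$. The basic identity I will use repeatedly is $\int_\Om D_t f\,\dx = \ddt\int_\Om f\,\dx - \int_\Om (\nab\cdot u) f\,\dx$, which follows from $u_3|_\Sigma = 0$, together with the Leibniz-type formula $a D_t(|h|^2) = 2a h D_t h$.

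For the first principal term $\eps^2 \int a(D_t^2 q)(D_t q)\,\dx$ I apply the above with $h=D_t q$ to obtain $\frac{\eps^2}{2}\ddt\int a |D_t q|^2\,\dx$ modulo harmless remainders involving $\nab\cdot u$ and $D_t a$; the latter gains an extra $\eps$ from $D_t a = \eps(\p_q a)D_t q$ since $D_t S = 0$. For the diffusive term $-\int\nab\cdot(\rho^{-1}\nab q)(D_t q)\,\dx$, integration by parts in space produces no boundary contribution thanks to the Neumann condition $\p_3 q = 0$; the resulting bulk expression splits as $\int \rho^{-1}\nab q\cdot D_t\nab q\,\dx$ plus a commutator $\int \rho^{-1}\nab q\cdot (\nab u)^{\mathrm{T}}\nab q\,\dx$ bounded by $\|u\|_{W^{1,\infty}}\|\nab q\|_0^2$, and the former yields $\tfrac12\ddt\int \rho^{-1}|\nab q|^2\,\dx$ plus $(\nab\cdot u)$ and $D_t(\rho^{-1})$ remainders.

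The $(\fjp)^2$ terms will be the main obstacle, and the point where the structural observations of the preceding sections are really needed. I will integrate by parts once in $\fjp$ on $-\eps^2\int a(\fjp)^2 q (D_t q)\,\dx$: the boundary integral vanishes because $(F_{3j} + \bar F_{3j})|_\Sigma = 0$, and the ``extra'' bulk piece $\int a(\fjp q)(D_t q)\,\nab\cdot(F_j+\bar F_j)\,\dx$ is harmless because the constraint $\nab\cdot(\rho(F_j+\bar F_j))=0$ gives $\nab\cdot(F_j+\bar F_j) = -\rho^{-1}\fjp\rho \in L^\infty$ with bound $P(E(t))$. The remaining $\int\fjp(aD_tq)\cdot\fjp q\,\dx$ is reshaped by the crucial identity $[D_t,\fjp]=0$ from Lemma \ref{lem_Trans Equ} into $\int a(D_t\fjp q)(\fjp q)\,\dx = \tfrac12\int a D_t(|\fjp q|^2)\,\dx$, and the transport identity then delivers $\frac{\eps^2}{2}\ddt\int a|\fjp q|^2\,\dx$.

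Finally, the source-term contribution $\int \mathcal{G}^\eps\, D_t q\,\dx = \int(\eps^{-1}\mathcal{G}^\eps)(\eps D_t q)\,\dx$ is controlled by Cauchy--Schwarz, the uniform bound $\|\eps^{-1}\mathcal{G}^\eps\|_0 \le P(E(t))$ from \eqref{uniform G}, and $\|\eps D_t q\|_0^2 \le 2\W_0 \le P(E(t))$. The critical bookkeeping point throughout is that no negative power of $\eps$ is ever shed: the two $\eps$-weights in front of $D_t q$ and $\fjp q$ in $\W_0$ are exactly matched by the two powers of $\eps$ carried by the corresponding terms in \eqref{wave eq q1}, so all remainders are bounded by $P(E(t))$ uniformly in $\eps$, yielding $\ddt\W_0(t)\le P(E(t))$ as claimed.
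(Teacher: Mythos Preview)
Your proof is correct and follows essentially the same approach as the paper: test \eqref{wave eq q1} with $D_t q$, use the Neumann condition $\p_3 q|_\Sigma=0$ and the degeneracy $F_{3j}|_\Sigma=0$ to kill the boundary terms, invoke $[D_t,\fjp]=0$ from Lemma~\ref{lem_Trans Equ} to recombine the directional pieces into $\tfrac12\ddt\int a|\eps\fjp q|^2\dx$, and control all remainders (including the source term via \eqref{uniform G}) by $P(E(t))$. In fact you are slightly more careful than the paper in that you explicitly track the $\nab\cdot(F_j+\bar F_j)$ remainder arising from the $\fjp$ integration by parts and bound it via the divergence constraint \eqref{eq divF}.
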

		\begin{proof}
			Invoking \eqref{wave eq q1} and integrating by parts, we get
			\begin{align*}
				&\ddt\frac12\io a|\eps D_t q|^2\dx = \io a\eps^2 D_t^2 q\, D_t q\dx +\frac12\io \left(D_t a +(\nab\cdot u)a\right)|\eps D_t q|^2\dx\\
				=&\io\nab\cdot(\rho^{-1}\nab q)\,D_t q\dx +\sum\limits_{j=1}^3 \io \eps^2 a (\fjp)^2 q\, D_t q\dx \\
				&+\io \mathcal{G}^\eps D_t q + \frac12\left(D_t a +(\nab\cdot u)a\right)|\eps D_t q|^2\dx\\
				=&-\io \rho^{-1}\nab q\cdot D_t \nab q + \sum\limits_{j=1}^3 a \left(\fjp q\right)\, \left(D_t \fjp q \right)\dx\\
				&+\io \sum\limits_{j=1}^3\eps^2 (\fjp a) (\fjp q)D_tq +\rho^{-1}\nab q\cdot [D_t,\nab] q + \mathcal{G}^\eps D_t q + \frac12\left(D_t a +(\nab\cdot u)a\right)|\eps D_t q|^2\dx,
			\end{align*}where we also use the fact $[D_t,\fjp]=0$. Since $\|\mathcal{G}^\eps\|_{2,\eps}\lesssim \eps P(E(t))$ holds uniformly in $\eps$, we know the last line is uniformly bounded
			\[
			\io\sum\limits_{j=1}^3 \eps^2 (\fjp a) (\fjp q)D_tq +\rho^{-1}\nab q\cdot [D_t,\nab] q + \mathcal{G}^\eps D_t q + \frac12\left(D_t a +(\nab\cdot u)a\right)|\eps D_t q|^2\dx\leq P(E(t)).
			\] Thus, we obtain 
			\begin{align*}
				&-\io \rho^{-1}\nab q\cdot D_t \nab q +\sum\limits_{j=1}^3 a \left(\fjp q\right)\, \left(D_t \fjp q \right)\dx\\
				=&-\frac12\ddt\io \rho^{-1}|\nab q|^2 +\sum\limits_{j=1}^3 a\left|\eps\fjp q\right|^2 \dx\\
				&-\frac12\io\rho^{-2}(D_t\rho -\rho(\nab\cdot u))|\nab q|^2 - \sum\limits_{j=1}^3 \left(D_t a +(\nab\cdot u)a\right)|\eps\fjp q|^2\dx\\
				\lesssim&-\frac12\ddt\io \rho^{-1}|\nab q|^2 + \sum\limits_{j=1}^3 a\left|\eps\fjp q\right|^2 \dx+P(E(t)).
			\end{align*}which leads to our desired $L^2(\Om)$ estimates.
		\end{proof}
		
		One may also alternatively write $\nab\cdot(\rho^{-1}q)=\nab( \rho^{-1} )\cdot \nab q + \rho^{-1}\lap q$ to obtain another formulation of the wave equation
		\begin{equation}\label{wave eq q2}
			\begin{cases}
				\eps^2 \rho a D_t^2 q - \lap q - \sum\limits_{j=1}^3 \eps^2 \rho a (\fjp)^2 q = \wg^\eps&\text{ in }\Om,\\
				\p_3 q=0&\text{ on }\Sigma,
			\end{cases}
		\end{equation}with
		\begin{align}\label{wave source 2}
			\wg^\eps=\rho \mathcal{G}^\eps - \rho^{-1}\nab\rho\cdot\nab q = \rho \mathcal{G}^\eps - \eps a |\nab q|^2 - b \nab S\cdot\nab q.
		\end{align} 
		\begin{rmk}Do note that the source term $\wg^\eps$ now contains an $O(1)$ size term. This formulation is not suitable to prove uniform-in-$\eps$ wave-type estimates, especially when we differentiate \eqref{wave eq q2} by time derivatives. Instead, \eqref{wave eq q2} will be used to reduce the normal derivatives falling on $\nab q$, $\eps\fjp q$ and $\eps D_t q$.
		\end{rmk}
		
		\subsubsection{Reduction of pressure via elliptic estimates and wave equation}
		Now, we let $X=\nab q$ and $s=2$ in Lemma \ref{lem divcurl} to get
		\begin{align}\label{reduce nabq 1}
			\|\nab q\|_2^2 \lesssim \|\nab q\|_0^2 + \|\lap q\|_1^2 + \underbrace{\|\nab\times\nab q\|_1^2 + |\p_3 q|_{1.5}^2}_{=0} =  \|\nab q\|_0^2 + \|\lap q\|_1^2.
		\end{align}
		The term $\|\nab q\|_0^2$ has been controlled in Lemma \ref{lem wave L2}. For $\|\lap q\|_1^2$, we insert the wave equation \eqref{wave eq q2} to get
		\begin{align}\label{reduce nabq 2}
			\|\lap q\|_1^2\lesssim&~\|\eps^2 D_t^2 q\|_1^2 + \sum\limits_{j=1}^3 \|\eps^2 (\fjp)^2 q\|_1^2 + \|\wg^\eps\|_1^2 \notag\\
			&+ \left(\|\eps^2 D_t^2 q\|_0^2  + \sum\limits_{j=1}^3 \|\eps^2 (\fjp)^2 q\|_0^2\right)\|\rho a\|_{W^{1,\infty}}^2.
		\end{align}
		Thus, we shall seek for the uniform-in-$\eps$ control for $\|\eps^2 D_t^2 q\|_1^2$ and $\sum\limits_{j=1}^3\|\eps^2 (\fjp)^2 q\|_1^2$. The control of the remainder terms, namely $ \|\wg^\eps\|_1^2 $ and the second line of \eqref{reduce nabq 2}, is also postponed to later sections.
		
		In view of \eqref{eq divu}-\eqref{eq divF}, we also need to control $\|\eps D_t q\|_2^2$ and $\|\eps \flp q\|_2^2$ for $l=1,2,3$. We have
		\begin{align}
			\|\eps D_t q\|_2^2+\|\eps \flp q\|_2^2\lesssim \|\eps D_t q\|_0^2 +\|\eps \flp q\|_0^2+ \|\eps\nab D_t q\|_1^2+\|\eps \nab(\flp q)\|_1^2.
		\end{align} We let $s=1$ and $X=\nab D_t q$ and $\flp q~(l=1,2,3)$ respectively in Lemma \ref{lem divcurl} and $D_t\p_3 q=\flp\p_3 q=0$ on $\Sigma$ to get
		\begin{align}
			\|\eps\nab D_t q\|_1^2\lesssim&~ \|\eps\nab D_t q\|_0^2 + \|\eps\lap D_t q\|_0^2 + |\eps \p_3 D_t q|_{\frac12}^2\notag\\
			\label{reduce Dtq 1} \lesssim &~\|\eps D_t\lap q\|_0^2+\|\eps\nab D_t q\|_0^2 +\|\eps[\lap, D_t] q\|_0^2 + |\eps [\p_3, D_t]q|_{\frac12}^2\\
			\|\eps \nab(\flp q)\|_1^2\lesssim&~ \|\eps \nab(\flp q)\|_0^2 + \|\eps \lap \flp q\|_0^2+ |\eps \p_3 \flp q|_{\frac12}^2 \notag \\
			\lesssim&~\|\eps  \flp\lap q\|_0^2+\|\eps \nab(\flp q)\|_0^2\notag\\
			& +\|\eps[\lap, \flp] q\|_0^2+ |\eps [\p_3, \flp]q|_{\frac12}^2.\label{reduce FPq 1}
		\end{align}
		
		We then focus on the reduction of major terms $\|\eps D_t\lap q\|_0^2$ and $\|\eps  \flp\lap q\|_0^2$ and postpone the control of numerous lower-order remainder terms to later sections. Again, we invoke the wave equation \eqref{wave eq q2} to get
		\begin{align}\label{reduce Dtq 2}
			\eps D_t\lap  q=\eps^3 \rho a D_t^3 q - \eps^3 \rho a (\fjp)^2 D_t q - \eps D_t\wg^\eps+\cdots
		\end{align}and
		\begin{align}\label{reduce FPq 2}
			\eps \flp\lap  q=&~\eps^3 \rho a D_t^2\flp q - \sum_{j=1}^3 \eps^3 \rho a (\fjp)^2\flp q \\
			& - \eps \flp\wg^\eps+\cdots\notag
		\end{align}where the omitted terms are those generated when $D_t$ or $\flp$ falls on the coefficients $a$ and $\rho$. These omitted terms are lower-order and have no loss of $\eps$ weights because $a,\rho$ are functions of $\eps q$ and S and we have $D_t S=0$ and enhanced regularity for $\flp S$.
		
		To prove Proposition \ref{lem_Esti_div_1}, we also need to control the following terms
		\begin{equation}\label{reduce div 3}
			\begin{aligned}
				\|\eps^2 D_t^2 \nab q\|_0^2\lesssim&~ \|\eps^2\nab D_t^2  q\|_0^2+\|\eps^2 [D_t^2, \nab] q\|_0^2,\\
				\|\eps^2 D_t\flp \nab q\|_0^2\lesssim&~ \|\eps^2\nab D_t\flp  q\|_0^2+\|\eps^2 [D_t\flp, \nab] q\|_0^2,\\
				\|\eps^3 D_t^3q\|_0^2&\text{ and }\|\eps^3 D_t^2\flp q\|_0^2.
			\end{aligned}
		\end{equation}
		In summary, we shall seek for uniform-in-$\eps$ control of 
		\begin{equation}\label{reduce div}
			\begin{aligned}
				&\|\eps^3 D_t^3 q\|_0^2,~\|\eps^2 \nab D_t^2 q\|_0^2~~~(l=1,2,3),\\
				&\|\eps^2\nab D_t\flp  q\|_0^2,~\|\eps^3D_t^2\flp q\|_0^2,\\
				&\sum_{j=0}^3\|\eps^3 D_t (\fjp)^2 q\|_0^2,~\sum_{j=0}^3\|\eps^3  (\fjp)^2\flp q\|_0^2~~~(l=1,2,3).
			\end{aligned}
		\end{equation} and also the control of those remainder terms and commutators in \eqref{reduce Dtq 1}-\eqref{reduce div 3}.
		
		\subsubsection{Uniform estimates of tangentially-differentiated wave equations}
		We define
		\begin{align}
			\W_{1}^\eps(t):=\frac12\io& a\left|\eps^3D_t^3 q\right|^2 + \rho^{-1} \left|\eps^2\nab D_t^2 q\right|^2 + \sum_{j=1}^3a \left|\eps^3 D_t^2 \fjp q\right|^2\dx,\\
			\W_{2,l}^\eps(t):=\frac12\io& a\left|\eps^3D_t^2 \flp q\right|^2 + \rho^{-1} \left|\eps^2\nab\left( D_t \flp q\right)\right|^2 \\
			&+ \sum_{j=1}^3a \left|\eps^3 D_t \flp \left(\fjp q\right)\right|^2\dx~~~(l=1,2,3), \notag\\
			\W_{3,l}^\eps(t):=\frac12\io& a \left|\eps^3D_t (\flp)^2 q\right|^2 + \rho^{-1} \left|\eps^2\nab\left((\flp)^2 q\right)\right|^2 \\
			&+ \sum_{j=1}^3a \left|\eps^3(\flp)^{2}\left(\fjp q\right)\right|^2 \dx~~~(l=1,2,3),\notag
		\end{align} which are exactly the energy functionals for $D_t^2$-differentiated, $D_t\flp$-differentiated and $(\flp)^2$-differentiated wave equation \eqref{wave eq q1}. In this section, we prove the following conclusion.
		\begin{lem}[Uniform estimates for tangentially-differentiated wave equations]\label{lem wave H3}
			The energy functionals  $\W_{1}^\eps(t)$, $\W_{2,l}^\eps(t)$, $\W_{3,l}^\eps(t)$ obey the following uniform-in-$\eps$ estimates for any $\delta\in(0,1)$
			\begin{align}
				\W_{1}^\eps(t)+\sum_{l=1}^3\W_{2,l}^\eps(t)+\W_{3,l}^\eps(t)\leq \delta E(t) + P(E(0))+ P(E(t))\int_0^t P(E(\tau))\dtau.
			\end{align}
		\end{lem}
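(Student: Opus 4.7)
The plan is to treat each of the three energies by applying the corresponding tangential operator $T\in\{D_t^2,\,D_t\flp,\,(\flp)^2\}$ to the wave equation \eqref{wave eq q1}. Because $T$ commutes with $D_t$ at leading order (by Lemma \ref{lem_Trans Equ}, $[D_t,\fjp]=0$, hence $[D_t^k,(\flp)^m]=0$), the quantity $Tq$ satisfies an equation of the same shape as \eqref{wave eq q1} with source $T\mathcal{G}^\eps$ plus commutator terms produced when $T$ falls on the coefficients $a$, $\rho$ or on the vectors $F_j+\BF_j$. I would then pair the $Tq$-equation with a test function of the form $\eps^{2k}D_t(Tq)$, where $k$ is chosen so that the leading contribution reproduces exactly the $\eps$-weights in $\W_i^\eps$. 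Integrating over $\Om$ and using the Reynolds transport identity \eqref{transpt formula} together with integration by parts in $\nab$ and in $\fjp$ converts the three left-hand-side terms into $\frac{\mathrm{d}}{\mathrm{d}t}\W_i^\eps$ plus controllable remainders, exactly mirroring the $L^2$-level calculation already done in Lemma \ref{lem wave L2}.

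The structural ingredients that make the boundary traces and commutators harmless are: (i) the tangentiality of $D_t$ and $\fjp$ on $\Sigma$, forced by $u_3=F_{3j}=0$, so that integration by parts against $\fjp$ produces no boundary contribution; (ii) the Neumann condition $\p_3 q|_\Sigma=0$, which kills the principal boundary trace when integrating $\nab\cdot(\rho^{-1}\nab(Tq))$ by parts and reduces the boundary integral to $[\p_3,T]q|_\Sigma$, whose top-order piece vanishes since $T$ is tangential on $\Sigma$ and whose remainder is lower order, handled by the trace inequality; (iii) the uniform source bound $\|\eps^{-1}\mathcal{G}^\eps\|_{2,\eps}\lesssim P(E(t))$ recorded in \eqref{uniform G}; and (iv) the enhanced directional regularity of the entropy (Corollary \ref{lem_EnergyEsti_S}) and the vorticity bound (Lemma \ref{lem_EnergyEsti_curl}), which allow the coefficient commutators involving $\fjp S$, $D_t\rho_0$ and $\nab\times(\rho_0 F_j)$ to be absorbed without any loss of $\eps$-weight or derivative. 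The outcome is an inequality of Gr\"onwall form $\frac{\mathrm{d}}{\mathrm{d}t}\W_i^\eps\leq \delta E(t)+P(E(t))$ which integrates to the claimed bound.

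The main obstacle sits in the $(\flp)^2$-differentiated energy $\W_{3,l}^\eps$. When $(\flp)^2$ is commuted past $\nab\cdot(\rho^{-1}\nab\cdot)$, one encounters interior commutator terms of the shape $(\nab F_l)\cdot\nab\nab(\flp q)$ and, more delicately, a boundary residue $(\p_3 F_l)\cdot\nab(\flp q)$ on $\Sigma$; the latter survives because $\p_3 F_l$ need not vanish on $\Sigma$ even though $F_{3l}$ does. Such top-order contributions cannot always be rewritten as a time integral $\int_0^t P(E(\tau))\dtau$; instead, after pulling out one $\eps$ from the $\eps^{2k}$ weight, they must be closed by Young's inequality and absorbed into $\delta E(t)$, which is precisely why Proposition \ref{lem_Esti_div_1} tolerates a $\delta E(t)$ error. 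Bookkeeping of the $\eps$-budget in every commutator --- ensuring each lost derivative is paid for by a gained $\eps$, while preserving the $(\fjp)^2$-structure so that the test function integrates by parts cleanly against $\fjp$ --- is the technical heart of the argument.
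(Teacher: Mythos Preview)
Your overall strategy --- differentiate the wave equation by a tangential operator $T$, test against $\eps^{2k}D_t(Tq)$, and integrate by parts --- matches the paper's, and your structural observations (i)--(iv) are all correct. However, you have misplaced the main difficulty and in doing so missed the one step that actually produces the $\delta E(t)$ term.

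The paper singles out $\W_1^\eps$ (the $D_t^2$-differentiated case) as the hardest, not $\W_{3,l}^\eps$: time derivatives are more dangerous than $\flp$ because each one consumes an $\eps$-weight that a spatial derivative does not. Concretely, after integrating the elliptic term by parts with the ordering $\nab\cdot(\rho^{-1}D_t^2\nab q)$ --- so that the boundary trace is $D_t^2\p_3 q|_\Sigma=0$ and your boundary-residue worry disappears entirely --- one is left with the interior commutator
\[
\mathcal{J}:=-\int_\Om \eps^4\rho^{-1}[D_t^2,\nab]q\cdot D_t\bigl(\nab D_t^2 q\bigr)\,\mathrm{d}x.
\]
Here $\eps^2[D_t^2,\nab]q$ has the schematic form $\eps^2(\p D_t u)(\p q)+\eps^2(\p u)(\p D_t q)+\cdots$ and is bounded in $L^2$, but it is paired against $D_t(\eps^2\nab D_t^2 q)$, which carries one more time derivative than $\W_1^\eps$ controls. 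Hence $\mathcal{J}$ cannot be bounded by $P(E(t))$ pointwise in $t$, and a differential inequality of the form $\frac{\mathrm{d}}{\mathrm{d}t}\W_i^\eps\le \delta E+P(E)$ is not available.

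The paper's remedy is to integrate $\mathcal{J}$ by parts in $D_t$ (equivalently, in time). This moves the extra $D_t$ onto $[D_t^2,\nab]q$, which is harmless, and leaves an endpoint-in-time term
\[
\int_\Om \rho^{-1}\,\eps^2[D_t^2,\nab]q\cdot \eps^2\nab D_t^2 q\,\mathrm{d}x\Big|_0^t.
\]
It is exactly this endpoint contribution at time $t$ that, after H\"older, Sobolev interpolation and Young's inequality, is split into $\delta E(t)+P(E(0))+P(E(t))\int_0^t P(E(\tau))\,\mathrm{d}\tau$. Your proposal should make this time-integration-by-parts step explicit; once it is in place, the $D_t\flp$ and $(\flp)^2$ cases follow in the same way and are genuinely easier (the analogous commutators carry spare $\eps$-weights since $\flp$ is purely spatial), and your boundary residue in $\W_{3,l}^\eps$ is a non-issue if you adopt the operator ordering above.
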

		Once this lemma is proven, we immediately obtain the control for the terms in \eqref{reduce div}.
		
		\begin{proof}
			We only prove the estimate for $\W^\eps_1(t)$, that is, the uniform energy estimates for $D_t^2$-differentiated wave equation \eqref{wave eq q1}. This is the most difficult cases because $D_t^2$ involves more time derivatives than $D_t\flp$ and $(\flp)^2$ and then requires more $\eps$ weights to prove the uniform estimates. The other two cases can be proved in exactly the same manner if we recall the concrete forms of those commutators recorded in Lemma \ref{commutators}. 
			
			Taking $D_t^2$ and multiplying $\eps$ in \eqref{wave eq q1}, we obtain
			\begin{align}\label{wave eq qttt1}
				\eps^3 a D_t^4 q - \eps\nab\cdot(\rho^{-1} D_t^2 \nab q) - \eps^3 a (\fjp)^2D_t^2 q = G_{3,2}^\eps
			\end{align}
			where
			\begin{align*}
				G_{3,2}^\eps:=&~\eps^2 D_t^2(\eps^{-1}\mathcal{G}^\eps) + \eps^3 D_t^2 a(D_t^2 q - (\fjp)^2 q) +2\eps^3 D_t a(D_t^3 q - (\fjp)^2 D_t q)\\
				&+\eps [D_t^2,\nab\cdot](\rho^{-1}\nab q)+\eps \nab\cdot([D_t^2,\rho^{-1}]\nab q).
			\end{align*}
			Testing \eqref{wave eq qttt1} with $\eps^3 D_t^3 q$ in $L^2$, we obtain 
			\begin{align*}
				&\ddt\frac12\io a |\eps^3D_t^3 q|^2 \dx= \io a\eps^6 D_t^4 q\,D_t^3 q\dx +\frac12\io (D_t a+ (u\cdot \nab) a)|\eps^3 D_t^3 q|^2\dx\\
				=&\io \eps\nab\cdot(\rho^{-1}D_t^2\nab  q)\, (\eps^3 D_t^3 q)\dx +\io \eps^3 a (\fjp)^2 D_t^2 q \, (\eps^3 D_t^3 q)\dx\\
				+& \io  G_{3,2}^\eps\,(\eps^3 D_t^3 q)\dx+\frac12\io (D_t a+ (u\cdot \nab) a)|\eps^3 D_t^3 q|^2\dx
			\end{align*}
			Integrating by parts and using $D_t^2 \p_3 q=F_{3j}+\BF_{3j}=0$ on $\Sigma$, we get
			\begin{align*}
				&\io \eps\nab\cdot(\rho^{-1}D_t^2\nab  q)\, (\eps^3 D_t^3 q)\dx + \sum_{j=1}^3\io \eps^3 a (\fjp)^2 D_t^2 q \, (\eps^3 D_t^3 q)\dx\\
				=&-\io\rho^{-1}(\eps^2 \nab D_t^2 q)\,D_t (\eps^2 \nab D_t^2 q) \dx - \sum_{j=1}^3 \io a (\eps^3 \fjp D_t^2 q)\, D_t(\eps^3 \fjp D_t^2 q)\dx\\
				&+\io\rho^{-1} (\eps^2 D_t^2 \nab q)\,[D_t, \nab](\eps^2  D_t^2 q) - \sum_{j=1}^3 \eps^3\nab\cdot(a(F_j+\BF_j))\,D_t^2 q\,(\eps^3D_t^3 q)\dx\\
				&\underbrace{-\io\eps^4\rho^{-1}[D_t^2,\nab]q\cdot D_t(\nab D_t^2 q)\dx}_{\mathcal{J}}\\
				=&-\frac12\ddt\io \rho^{-1}\left|\eps\nab D_t^2 q\right|^2 +\sum_{j=1}^3 a\left|\eps^3\fjp D_t^2 q\right|^2\dx\\
				&-\sum_{j=1}^3\frac12\io \rho^{-2}(D_t\rho -\rho(\nab\cdot u)) - (D_t a+(u\cdot \nab)a)\left|\eps^3\fjp D_t^2 q\right|^2\dx\\
				&+\io\rho^{-1} (\eps^2 \nab D_t^2 q)\,[D_t, \nab](\eps^2  D_t^2 q) - \sum_{j=1}^3 \eps^3\nab\cdot(a(F_j+\BF_j))\,D_t^2 q\,(\eps^3D_t^3 q)\dx+\mathcal{J}\\
				\lesssim&-\frac12\ddt\io \rho^{-1}\left|\eps\nab D_t^2 q\right|^2 +\sum_{j=1}^3 a\left|\eps^3\fjp D_t^2 q\right|^2\dx+P(E(t))+\mathcal{J}.
			\end{align*}
			It remains to control $\mathcal{J}$, in which we should integrate by parts $D_t$ to avoid loss of derivative:
			\begin{align*}
				\int_0^t\mathcal{J}(\tau,\cdot)\dtau\overset{D_t}{==}&\int_0^t\io\rho^{-1}\eps^2 D_t\left([D_t^2,\nab]q\right)\cdot (\eps^2\nab D_t^2 q) \dx\dtau \\
				&+\int_0^t\io\left(D_t\rho-\rho(\nab\cdot u)\right)\,  \left(\eps^2[D_t^2,\nab]q\right)\cdot (\eps^2\nab D_t^2 q) \dx\dtau\\
				&+\io \rho^{-1}\eps^2\left([D_t^2,\nab]q\right)\cdot (\eps^2\nab D_t^2 q) \dx\bigg|^t_0,
			\end{align*}where the first two terms are bounded by $\int_0^tP(E(\tau))\dtau$ by direct computation because the commutator $[D_t^2,\nab]q$ only consists of terms in the form of $(\p D_tu)(\p q),~(\p D_t q)(\p u)$ or $(\p u)(\p u)(\p q)$ according to Lemma \ref{commutators}. As for the last term, we again invoke the concrete form of the commutator to see the highest-order part has the form
			\begin{align*}
				\mathcal{J}_0:=\io \rho^{-1}\eps^2(\p D_t X)(\p Y) (\eps^2\nab D_t^2 q) \dx
			\end{align*}where $(X,Y)=(q,u)$ or $(u,q)$. Using Young's inequality and Sobolev interpolation, we know
			\begin{align*}
				\mathcal{J}_0\lesssim&~ \delta\|\eps^2\nab D_t^2 q\|_0^2 + \|\eps\rho^{-1}(\p Y)\|_{L^6}^2\|\eps \p D_t X\|_{L^3}^2\lesssim\delta\|\eps^2\nab D_t^2 q\|_0^2 + \|\eps\rho^{-1}(\p Y)\|_{1}^2\|\eps \p D_t X\|_{\frac12}^2\\
				\lesssim&~\delta(\|\eps^2\nab D_t^2 q\|_0^2 +\|\eps \p D_t X\|_{1}^2) + \|\eps\rho^{-1}(\p Y)\|_{1}^4\|\eps \p D_t X\|_{0}^2\\
				\lesssim&~\delta E(t) + \|\eps \rho^{-1}(\p Y)\|_1^4\|\eps\p D_t X\|_0^2.
			\end{align*}It is easy to see
			\[
			\|\eps \rho^{-1}(\p Y)(t)\|_1^4\lesssim \|\eps \rho^{-1}(\p Y)(0,\cdot)\|_1^4 + \int_0^t \|\eps\p_t (\rho^{-1}(\p Y))(\tau,\cdot)\|_0^4\dtau\leq \eps^4 \|\rho^{-1}(\p Y)(0,\cdot)\|_1^8 +\int_0^t P(E(\tau))\dtau,
			\]which then leads to 
			\begin{align*}
				\|\eps \rho^{-1}(\p Y)\|_1^4\|\eps \p D_t X\|_0^2&\lesssim \eps^2\|\rho^{-1}(\p Y)(0,\cdot)\|_1^4\|\eps^2\p D_t X\|_0^2 +\|\eps\p D_t X\|_0^2\int_0^t P(E(\tau))\dtau\\
				\lesssim&~\eps^2\|\rho^{-1}(\p Y)(0,\cdot)\|_1^4\left(\|\eps^2 \p D_t X(0,\cdot)\|_0^2+\int_0^t \|\eps^2 \p_t\p D_t X(\tau,\cdot)\|_0^2\dtau\right) + P(E(t))\int_0^t P(E(\tau))\dtau\\
				\lesssim &~P(E(0))+P(E(t))\int_0^t P(E(\tau))\dtau.
			\end{align*}and thus
			\begin{align}
				\int_0^t \mathcal{J}(\tau,\cdot)\dtau\leq \delta E(t) + P(E(0))+ P(E(t))\int_0^t P(E(\tau))\dtau.
			\end{align}
			Using the uniform bound $\|\eps^{-1}\mathcal{G}^{\eps}\|_{2,\eps}\leq P(E(t))$ and the concrete forms of the commutators recorded in  Lemma \ref{commutators}, we obtain $\|G_{3,2}^\eps\|_{0}\leq P(E(t))$ and thus the above analysis give us 
			\[
			\W_{1}^\eps(t) \leq \delta E(t) + P(E(0))+ P(E(t))\int_0^t P(E(\tau))\dtau.
			\]as desired. For the other two cases, we can replace $D_t^2$ by $D_t^k(\flp)^{2-k}$ for $k=0,1$ and obtain the desired energy bound in the same way.
		\end{proof}
		\begin{rmk}
			The appearance of $\mathcal{J}$ is necessary. In fact, when integrating by parts, we must eliminate all boundary terms by differentiating the boundary condition $\p_3 q|_{\Sigma}=0$ with $D_t^k(\fjp)^{2-k}$. Such derivatives have variable coefficients and we cannot commute them with $\p_3$, that is, we may not ensure $\p_3\left(D_t^k(\fjp)^{2-k}q\right)=0$ on $\Sigma$.
		\end{rmk}

		\subsubsection{Uniform estimates of remainder terms}
		We already control the top-order terms via the tangentially-differentiated wave equations. Applying the same strategy to $\eps D_t$-differentiated and $\fjp$-differentiated wave equation \eqref{wave eq q1}, we can obtain the same control for the terms in the second line of \eqref{reduce nabq 2}. Now, we turn to control to remainder terms that are generated in the reduction process. We aim to prove the following uniform-in-$\eps$ estimates for all these remainder terms.
		\begin{lem}[Remainder estimates]\label{lem wave remainders}
			For all $\delta\in(0,1)$, there hold the following uniform-in-$\eps$ estimates
			\begin{align}
				&\|\nab q\|_0^2+\|\eps\nab D_t q\|_0^2+\sum_{j=1}^3\|\eps\nab(\fjp q)\|_0^2\lesssim P(E(0))+\int_0^t P(E(\tau))\dtau.\\
				&\|\eps[\lap, D_t]q\|_0^2+|\eps[\p_3,D_t]q|_{\frac12}^2 + \|\eps^2[D_t^2,\nab] q\|_0^2\notag\\
				+&\sum_{l=1}^3\|\eps[\lap,\flp]q\|_0^2+|\eps[\p_3,\flp]q|_{\frac12}^2+ \|\eps^2[D_t\flp,\nab] q\|_0^2\notag\\
				\lesssim&~ \delta E(t)+ P(E(0))+P(E(t))\int_0^t P(E(\tau))\dtau\\
				&\|\wg^\eps\|_1^2+\|\eps D_t \wg^\eps\|_0^2+\sum_{l=1}^3\|\eps\flp\wg^\eps\|_0^2\lesssim~ \delta E(t) + P(E(0))+P(E(t))\int_0^t P(E(\tau))\dtau.
			\end{align}
		\end{lem}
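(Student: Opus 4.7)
The plan is to treat the three displayed inequalities separately, as each has a different character: the first is a bona fide wave-type $L^2$ bound obtained by adapting Lemma \ref{lem wave L2} to once-differentiated wave equations; the second is a commutator estimate requiring careful splitting of the highest-order factors; the third is a source-term bound that exploits the $O(\eps)$ structure of $\mathcal{G}^\eps$ recorded in \eqref{uniform G}.

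For the first inequality, $\|\nab q\|_0^2$ is already controlled by $\W_0(t)$ from Lemma \ref{lem wave L2}. For $\|\eps\nab D_tq\|_0^2$ and $\|\eps\nab(\fjp q)\|_0^2$, I would apply $D_t$ (resp.\ $\flp$) to the wave equation \eqref{wave eq q1} with a single $\eps$ factor, producing an equation of the same schematic shape as \eqref{wave eq qttt1} but with one fewer derivative. Testing against $\eps^2 D_t^2 q$ (resp.\ $\eps^2\flp D_tq$, using $[D_t,\flp]=0$ from Lemma \ref{lem_Trans Equ}), the Neumann boundary condition $\p_3 q|_\Sigma=0$ together with $u_3=F_{3j}=0$ on $\Sigma$ eliminates every boundary contribution when integrating by parts, exactly as in Lemma \ref{lem wave H3}. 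Since the newly generated source and commutator terms remain $O(1)$ in $\eps$, a direct Gr\"onwall argument against $P(E(\tau))$ yields the claimed bound.

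For the commutator inequality, I would expand each commutator using the concrete formulas from Lemma \ref{commutators}. For instance, $\eps[\lap,D_t]q$ produces terms of the schematic shape $\eps(\p u)(\p\nab q)+\eps(\p\nab u)(\nab q)$, while $\eps^2[D_t^2,\nab]q$ yields $\eps^2(\p D_tu)(\p q)+\eps^2(\p u)(\p D_tq)+\eps^2(\p u)^2(\p q)$. Each such product is handled by placing the top-order factor in $L^2$ and the lower-order one in $L^\infty$ via Sobolev embedding. For borderline products in which a $\delta E(t)$ tolerance is needed, I would employ the Jensen-type time-integration trick already used in the passage from Proposition \ref{lem_Esti_div_1} to Proposition \ref{lem_Esti_div}, namely $\|f(t)\|^2\leq 2\|f(0)\|^2+2t\int_0^t\|\p_tf(\tau)\|^2\dtau$, applied to the factor carrying an $\eps$-weighted time derivative; this converts an otherwise $O(1)$ bound into $\delta E(t)+P(E(0))+P(E(t))\int_0^tP(E(\tau))\dtau$. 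The boundary commutators $|\eps[\p_3,D_t]q|_{\frac12}$ and $|\eps[\p_3,\flp]q|_{\frac12}$ reduce to $|\eps(\p u)(\p q)|_{\frac12}$-type quantities controlled by a trace inequality together with the interior estimates, using $u_3=F_{3j}=0$ on $\Sigma$.

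For the source-term inequality, I would invoke \eqref{wave source 2} to write $\wg^\eps=\rho\mathcal{G}^\eps-\eps a|\nab q|^2-b\nab S\cdot\nab q$; the first piece is $O(\eps)$ in view of \eqref{uniform G}, while the other two are quadratic and of $O(1)$ size. When $D_t$ or $\flp$ is applied, I would rely heavily on $D_tS=0$, $[D_t,\flp]=0$, and the enhanced directional regularity of $S$ from Corollary \ref{lem_EnergyEsti_S}, so that no loss of $\eps$-weight or derivative arises from the $S$-dependence of $a,b,\rho$. The main obstacle, I expect, is the quadratic term $\eps a|\nab q|^2$: differentiating it produces $\eps(\nab q)\cdot\nab(D_tq)$ and $\eps(\nab q)\cdot\nab(\flp q)$, whose $L^2$ norms involve pairing the top-order factor (bounded by the first inequality just proven) with an $L^\infty$ or $L^3$ norm of $\nab q$. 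Closing this without leaking an $\eps^{-1}$ will again rely on the Jensen time-integration trick together with Sobolev interpolation, so that the $O(1)$ factor $\|\nab q\|_{L^\infty}$ is absorbed into $\delta E(t)+P(E(0))+P(E(t))\int_0^t P(E(\tau))\dtau$.
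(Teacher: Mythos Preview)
Your proposal is correct and essentially matches the paper's proof: wave-type energy estimates on once-differentiated versions of \eqref{wave eq q1} for the first inequality, commutator expansion via Lemma \ref{commutators} combined with the product/interpolation inequality of Corollary \ref{cor product} and the time-integration trick for the second and third. One small correction: in your decomposition of $\wg^\eps$, the term $\eps a|\nab q|^2$ carries an explicit $\eps$ and is not $O(1)$; the only genuinely $O(1)$ piece is $b\nab S\cdot\nab q$, which the paper singles out and handles by applying Corollary \ref{cor product} to obtain $\delta E(t)+\|q\|_1^2\|S\|_2^8$, then using that $\|q\|_1^2$ is already controlled by Lemma \ref{lem wave L2} and Proposition \ref{L2 energy} while $\|S(t)\|_2^8$ admits a time-integrated bound via $\p_tS=-u\cdot\nab S$.
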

		\begin{proof} The remainder terms are classified into three types as shown in the lemma.
			\paragraph*{The $L^2$ terms in the div-curl inequality.} In \eqref{reduce nabq 1}, \eqref{reduce Dtq 1} and \eqref{reduce FPq 1}, we shall also control the terms
			\[
			\|\nab q\|_0^2,\q\|\eps\nab D_t q\|_0^2,\q \|\eps\nab(\flp q)\|_0^2.
			\]The first term has been controlled in Lemma \ref{lem wave L2} (the $L^2(\Om)$ estimate of the wave equation). The second term is part of the energy functional for $\eps D_t$-differentiated wave equations \eqref{wave eq q1} and the third term is part of the energy functional for $\eps \fjp$-differentiated wave equations \eqref{wave eq q1}. The control of these two terms is identically the same as in the proof of Lemma \ref{lem wave H3} and so we no longer repeat the details. That is, the wave-type estimates immediately lead to
			\begin{align}\label{remainder wave}
				\|\nab q\|_0^2+\|\eps\nab D_t q\|_0^2+\|\eps\nab(\fjp q)\|_0^2\lesssim P(E(0))+\int_0^t P(E(\tau))\dtau.
			\end{align}

			\paragraph*{The source terms of tangentially-differentiated wave equations.} In \eqref{reduce nabq 2}, \eqref{reduce Dtq 2} and \eqref{reduce FPq 2}, we shall seek for the control of
			\[
			\|\wg^\eps\|_1^2,\q \|\eps D_t \wg^\eps\|_0^2,\q \|\eps\flp \wg^\eps\|_0^2.
			\]
			In view of \eqref{wave source 2}, we have
			\begin{align*}
				\|\wg^\eps\|_1^2\lesssim &~\|\nab S\cdot \nab q\|_1^2 +  \left( \|F_j\|_1^2\|\eps\fjp S\|_2^2+\|\eps(\p u)(\p u)\|_1^2+\|\eps(\p F_j)(\p F_j)\|_1^2 +  \|\eps\nab q\cdot\nab q\|_1^2 \right)\\
				&+\|\eps ( \fjp S)( \fjp S) \|_1^2+\|(\eps \fjp q)(\eps \fjp S)\|_1^2  \\
				&+ \eps^2 \left(\|\eps \fjp q\|_1^4+\|\eps D_t q\|_1^4\right).
			\end{align*}Here we omit the terms that $D_t$ or $\fjp$ falls on the coefficients $a,b,\rho$ or $\p_S a, \p_q a, \p_S b, \p_q b$. Since $D_t S=0$ and $\fjp$ is a spatial derivative, we know such terms never lead to loss of $\eps$ weights or derivative loss.
			Invoking Corollary \ref{cor product}, we have
			\[
			\|(\nab U)(\nab V)\|_1^2\lesssim\delta\| U\|_3^2 + \|U\|_1^2\|V\|_2^8\lesssim \delta E(t) + \|U\|_1^2\|V\|_2^8,
			\]where $U, V$ can be any of $u, F_j, q, S$. The $\delta$-terms will be absorbed when we finalize the estimate of $E(t)$, while all the other terms are of lower order. 
			
			For the term $\|\nab S\cdot \nab q\|_1^2$, the above argument gives us 
			\begin{align*}
				\|\nab S\cdot \nab q\|_1^2\lesssim \delta E(t) +\|q\|_1^2\| S \|_2^8.
			\end{align*}Since $\|q\|_1^2$ has been controlled in the $L^2$ estimate in Proposition \ref{L2 energy} and the wave-type estimate in Lemma \ref{lem wave L2}, and $\p_t S=-u\cdot\nab S$, we know $\|S(t)\|_2^8\lesssim \|S_0\|_2^8+\int_0^t \|u\|_2^8\|\nab S\|_2^8\dtau\leq P(E(0))+\int_0^t P(E(\tau))\dtau$., which then leads to the uniform-in-$\eps$ estimate
			\begin{align}
				\|\nab S\cdot \nab q\|_1^2\lesssim \delta E(t) + P(E(0))+\int_0^t P(E(\tau))\dtau.
			\end{align}
			For the other terms appearing in the estimate of $\|\wg^\eps\|_1^2$, there is at least one term involving $\eps$ weight, and so we get by mimicing the proof of ``Prop. \ref{lem_Esti_div_1} $\Rightarrow$ Prop. \ref{lem_Esti_div}" to get
			\begin{align*}
				\|(\nab U)(\eps\nab V)\|_1^2\lesssim&~\delta E(t) + \|U\|_1^2\|\eps V\|_2^8\lesssim \delta E(t) + \|U\|_1^2\left(\|\eps V(0)\|_2^8+\int_0^t\|\eps\p_t V(\tau,\cdot)\|_2^8\dtau\right)\\
				=&~\delta E(t) + \eps^6\|\eps U\|_1^2\|V(0)\|_2^8 + \|U(t)\|_1^2\int_0^t\|\eps\p_t V(\tau,\cdot)\|_2^8\dtau\\
				\lesssim&~\delta E(t) + \eps^6\|V(0)\|_2^8\left(\|\eps U(0)\|_2^2+\int_0^t\|\eps\p_t U(\tau,\cdot)\|_2^2\dtau\right) + \|U(t)\|_1^2\int_0^t\|\eps\p_t V(\tau,\cdot)\|_2^8\dtau\\
				\lesssim&~\delta E(t) + P(E(0))+(1+E(t))\int_0^t P(E(\tau))\dtau.
			\end{align*}
			Thus, we obtain the uniform-in-$\eps$ control for the source term $\|\rho\wg^\eps\|_1^2$ by
			\begin{align}\label{remainder source 1}
				\forall\delta\in(0,1),~~~\|\wg^\eps\|_1^2\lesssim \delta E(t) + P(E(0))+P(E(t))\int_0^t P(E(\tau))\dtau.
			\end{align}
			
			Next, we seek for the control of $\|\eps D_t (\rho\wg^\eps)\|_0^2$ and $\|\eps\fjp(\rho\wg^\eps)\|_0^2$. Let us recall their concrete forms
			\begin{align*}
				\eps D_t \wg^\eps= &~\eps b \nab S\cdot \nab D_t q+2\eps^2 a\nab q\cdot \nab D_t q  -\eps^2 \rho(\p_i D_t u_j \, \p_j u_i + \eps^2  \p_i D_t F_{kj} \, \p_j F_{ki}) \\
				&-\eps^3 \rho\left(\p_S a + \p_q b\right)\left(\fjp D_t q\right)\left(\fjp S\right) \\
				& - 2\eps^4\rho \p_q a \left(\fjp q\,\fjp D_t q - D_t q\,D_t^2 q\right)+\cdots 
			\end{align*}
			and
			\begin{align*}
				\eps\flp\wg^\eps=&~\eps b \flp (\nab S\cdot \nab q)+\eps^2  \flp |\nab q |^2 -\eps^2 \rho\flp (\p_i u_j \, \p_j u_i +  \p_i F_{kj} \, \p_j F_{ki}) \notag\\
				&-\sum_{j=1}^3\eps^3  \rho(\p_S a + \p_q b)(\fjp(\flp q))(\fjp S)  \notag \\
				&-\sum_{j=1}^3\eps^3  \rho(\p_S a + \p_q b)(\fjp(\flp S))(\fjp q)\notag\\
				&-\sum_{j=1}^3 2\eps^4\rho \p_q a (\fjp q\,(\fjp(\flp q))  - D_t q\,\flp D_t q)+\cdots \notag
			\end{align*}
			where the omitted terms are generated by either of the following two ways:
			\begin{itemize}
				\item $D_t$ or $\fjp$ falls on the coefficients $a,b,\rho$ or $\p_S a, \p_q a, \p_S b, \p_q b$. Since $D_t S=0$ and $\fjp$ is a spatial derivative, we know such terms never lead to loss of $\eps$ weights or derivative loss.
				\item Commute $D_t$ with $\nab$. Note also that $[D_t,\fjp]=0$ and $[D_t,\nab](\cdot)=(\nab u)\tilde{\cdot}\p(\cdot)$ does not lead to loss of $\eps$ weights or derivative loss.
			\end{itemize}
			Here we only show the control of $\|\eps D_t \wg^\eps\|_0^2$ and the same argument applies to $\|\eps\flp\wg^\eps\|_0^2$ by replacing $D_t$ by $\flp$. In fact, we only need to notice that the terms in $\eps D_t \wg^\eps$ have the form
			\[
			\nab S\cdot(\eps\nab D_t q )\text{ or }\eps^j(\eps \nab U)(\eps \nab D_t V),~j\geq 0.
			\] We just follow the same strategy as in the control of $\|\rho\wg^\eps\|_1^2$ to get
			\[
			\|(\eps \nab U)(\eps\nab D_t V)\|_0^2\lesssim\delta\|\eps D_t V\|_2^2 + \|\eps D_t V\|_0^2\|\eps U\|_2^8\leq \delta E(t) + P(E(0))+P(E(t))\int_0^t P(E(\tau))\dtau,
			\]and
			\[
			\|\nab S\cdot(\eps\nab D_t q )\|_0^2\leq \delta \|\eps D_t q\|_2^2 + P(E(0))+\int_0^t P(E(\tau))\dtau.
			\] Thus, we conclude that
			\begin{align}\label{remainder source 2}
				\forall\delta>0,~~\|\eps D_t \wg^\eps\|_0^2+\|\eps\flp\wg^\eps\|_0^2\lesssim \delta E(t) + P(E(0))+P(E(t))\int_0^t P(E(\tau))\dtau.
			\end{align}
			
			\paragraph*{The commutators arising from the elliptic estimates.} In \eqref{reduce Dtq 1} and \eqref{reduce FPq 1}, we need to control $\|\eps[\lap, D_t]q\|_0^2$ and $\|\eps[\lap,\flp]q\|_0^2$. Lemma \ref{commutators} shows that these two terms have the form $\eps \lap X\cdot\nab q + 2 \eps\sum\limits_{i,k=1}^3(\p_i X_k)\,\p_i\p_k q$ with $X=u$ or $X=\fl$. Thus, by using Corollary \ref{cor product}, we have
			\begin{align*}
				\|\eps \lap X\cdot\nab q\|_0^2+2\eps\|(\p X)(\p^2 q)\|_0^2\lesssim \delta(\|X\|_3^2+\|q\|_3^2)+\|X\|_2^2\|\eps\nab q\|_1^4+\|q\|_2^2\|\eps\nab X\|_1^4.
			\end{align*}Then use $\|X\|_2^2\lesssim\|X\|_1\|X\|_3\lesssim\delta\|X\|_3^2+\|X\|_1^2$ and Young's inequality
			\begin{align}
				&\|\eps \lap X\cdot\nab q\|_0^2+2\eps\|(\p X)(\p^2 q)\|_0^2\lesssim \delta(\|X\|_3^2+\|q\|_3^2)+\|X\|_1^2\|\eps\nab q\|_1^8+\|q\|_1^2\|\eps\nab X\|_1^8.
			\end{align} The $\eps$-term can be easily bounded
			\[
			\|\eps\nab X\|_1^8+\|\eps\nab q\|_1^8\lesssim \eps^8\left(\|\nab X(0)\|_1^8+\|\nab q(0)\|_1^8\right)+\int_0^t \|\eps \nab \p_t X(\tau,\cdot)\|_1^8+\|\eps \nab \p_t q(\tau,\cdot)\|_1^8\dtau.
			\]Then we get
			\begin{align*}
				\|X\|_1^2\|\eps\nab q\|_8^2\lesssim&~ \eps^6\|\eps X\|_1^2\left(\|\nab X(0)\|_1^8+\|\nab q(0)\|_1^8\right) + \|X\|_1^2 \int_0^t \|\eps \nab \p_t X(\tau,\cdot)\|_1^8+\|\eps \nab \p_t q(\tau,\cdot)\|_1^8\dtau\\
				\lesssim&~\eps^6(E(0))^4\left(\|\eps X(0,\cdot)\|_1^2 + \int_0^t\|\eps\p_t X(\tau,\cdot)\|_1^2\dtau \right) +(1+E(t))\int_0^tP(E(\tau))\dtau\\
				\lesssim&~P(E(0))+E(t)\int_0^tP(E(\tau))\dtau,
			\end{align*}and similarly
			\[
			\|q\|_1^2\|\eps\nab X\|_8^2\lesssim P(E(0))+E(t)\int_0^tP(E(\tau))\dtau.
			\]
			Therefore, we get the uniform-in-$\eps$ estimates for the commutators involving $\lap$:
			\begin{align}
				\|\eps[\lap, D_t]q\|_0^2+\sum_{l=1}^3\|\eps[\lap,\flp]q\|_0^2\lesssim \delta E(t)+ P(E(0))+P(E(t))\int_0^tP(E(\tau))\dtau
			\end{align}without using the smallness of $\eps$. Here $\delta\in(0,1)$ is a positive constant.
			
			We also need to control $|\eps[\p_3,D_t]q|_{\frac12}^2$, $|\eps[\p_3,\flp]q|_{\frac12}^2$, $\|\eps^2[D_t^2,\nab]q\|_0^2$ and $\|\eps^2[D_t\flp,\nab]q\|_0^2$. Invoking there concrete forms in Lemma \ref{commutators}, we can find that the control of these commutators can be done in the same way as in the control of $\wg^\eps$ and $\eps D_t\wg^\eps$. In fact, the commutators involving the boundary norms can be controlled by
			\[
			|\eps[\p_3,D_t]q|_{\frac12}^2\leq \|\eps[\p_3,D_t]q\|_1^2\leq \|\eps(\p_3 u_k)(\p_k q)\|_1^2
			\]which still has the form $\|\eps(\nab U)(\nab V)\|_1^2$. Similarly, the leading-order terms in $\|\eps^2[D_t^2,\nab]q\|_0^2$ are $\|(\eps \p u)(\eps\p D_t q)\|_0^2$ and $\|(\eps \p q)(\eps\p D_t u)\|_0^2$ which still has the form $\|(\eps \p U)(\eps\p D_tV)\|_0^2$. So, we conclude the commutator estimates by the following inequality without repeating the same steps:
			\begin{align*}
				&\|\eps[\lap, D_t]q\|_0^2+|\eps[\p_3,D_t]q|_{\frac12}^2 + \|\eps^2[D_t^2,\nab] q\|_0^2\\
				+&\sum_{l=1}^3\|\eps[\lap,\flp]q\|_0^2+|\eps[\p_3,\flp]q|_{\frac12}^2+ \|\eps^2[D_t\flp,\nab] q\|_0^2\\
				\lesssim&~ \delta E(t)+ P(E(0))+P(E(t))\int_0^t P(E(\tau))\dtau.
			\end{align*}
		\end{proof}
		
		\subsubsection{Estimates of divergence and pressure}
		Now, we can conclude that Proposition \ref{lem_Esti_div_1} holds. In fact, The control of quantities in Proposition \ref{lem_Esti_div_1} are reduced to the control of the terms in \eqref{reduce div} by repeatedly using the concrete form of the wave equation \eqref{wave eq q2} and the div-curl inequality in Lemma \ref{lem divcurl}, as shown in \eqref{reduce nabq 1}-\eqref{reduce div 3}. Then we establish the control of these norms of $q$ via the wave-type estimates for tangentially-differentiated wave equation \eqref{wave eq q1} (not \eqref{wave eq q2}), which is presented in Lemma \ref{lem wave L2}, Lemma \ref{lem wave H3}. The remainders generated in \eqref{reduce nabq 1}-\eqref{reduce FPq 2} are all controlled in Lemma \ref{lem wave remainders}. Thus, we obtain that
		\[
		\|(\eps D_t)^k\nabla q\|_{2-k}^2 + \|(\eps D_t)^k\nabla \cdot u\|_{2-k}^2  + \sum_{j=1}^3 \|(\eps D_t)^k\nabla \cdot F_j\|_{2-k}^2  \leq \delta E(t) + P(E(0))+ P(E(t))\int_0^t P(E(\tau))\dtau
		\]as desired in Proposition \ref{lem_Esti_div_1} which then immediately leads to Proposition \ref{lem_Esti_div}.
		
		\subsection{Estimates of full time derivatives}
		Now, it remains for us to establish $L^2$-energy estimate for the full-time derivatives of \eqref{CElasto3}. We have:
		\begin{prop}\label{lem full time}
			Under the assumptions of Theorem \ref{main thm, ill data}, we have
			\begin{equation}
				\sum_{k=0}^{3}\ddt \left(\|(\eps\p_t)^k(q,u)\|_0^2  +\sum_{j=1}^{3}\|(\eps\p_t)^k F_j\|_0^2  \right) \le P(E(t)).
			\end{equation}
		\end{prop}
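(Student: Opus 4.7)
The plan is to mimic the structure of the $L^2$ estimate in Proposition \ref{L2 energy}, but applied to the tangentially-differentiated (in time) system. For each $k\in\{0,1,2,3\}$, I would apply $(\eps\p_t)^k$ to the first three equations of \eqref{CElasto3}, producing
\begin{align*}
a D_t (\eps\p_t)^k q + \eps^{-1} \nabla\cdot (\eps\p_t)^k u &= \RR_1^{(k)},\\
\rho D_t (\eps\p_t)^k u + \eps^{-1}\nabla (\eps\p_t)^k q - \rho\sum_{j=1}^3 (F_j+\BF_j)\cdot\nabla (\eps\p_t)^k F_j &= \RR_2^{(k)},\\
D_t (\eps\p_t)^k F_j - (F_j+\BF_j)\cdot\nabla (\eps\p_t)^k u &= \RR_{3,j}^{(k)},
\end{align*}
where $\RR_1^{(k)},\RR_2^{(k)},\RR_{3,j}^{(k)}$ collect commutators of the form $[(\eps\p_t)^k,aD_t]q$, $[(\eps\p_t)^k,u\cdot\nabla]u$, $[(\eps\p_t)^k,(F_j+\BF_j)\cdot\nabla]F_j$, together with terms where $(\eps\p_t)^k$ falls on the smooth functions $a(\eps q,S)$ and $\rho(\eps q,S)$.

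Testing the three equations against $(\eps\p_t)^k q$, $(\eps\p_t)^k u$ and $\rho (\eps\p_t)^k F_j$ respectively, adding and using the Reynolds transport formula \eqref{transpt formula}, the two critical cancellations occur exactly as in Proposition \ref{L2 energy}: the singular terms combine into
\[
\eps^{-1}\int_{\Om}\bigl(\nabla\cdot (\eps\p_t)^k u\bigr)(\eps\p_t)^k q + \bigl(\nabla (\eps\p_t)^k q\bigr)\cdot (\eps\p_t)^k u\,\dx = \eps^{-1}\int_{\Sigma}(\eps\p_t)^k u_3\,(\eps\p_t)^k q\,\mathrm{d}S = 0,
\]
since $u_3|_\Sigma=0$ implies $(\eps\p_t)^k u_3|_\Sigma=0$; and the elastic stress terms combine into
\[
\sum_{j=1}^3\int_{\Om}\rho(F_j+\BF_j)\cdot\nabla\bigl((\eps\p_t)^k F_j\cdot(\eps\p_t)^k u\bigr)\dx = -\sum_{j=1}^3\int_{\Om}\nabla\cdot(\rho(F_j+\BF_j))\,(\eps\p_t)^k F_j\cdot(\eps\p_t)^k u\,\dx=0,
\]
using the divergence-free constraint $\nabla\cdot(\rho(F_j+\BF_j))=0$ together with $F_{3j}|_\Sigma=\BF_{3j}=0$ to kill the boundary contribution. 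This yields
\[
\frac12\ddt\int_{\Om} a|(\eps\p_t)^k q|^2 + \rho|(\eps\p_t)^k u|^2 + \sum_{j=1}^3\rho|(\eps\p_t)^k F_j|^2\,\dx \le \text{commutator contributions} + P(E(t)).
\]

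The main task is then to show that all commutator terms $\RR_\ell^{(k)}$ contribute at most $P(E(t))$ with no loss of Mach number weight. The key observation is that whenever $(\eps\p_t)^j$ falls on the coefficients $a=a(\eps q,S)$ or $\rho=\rho(\eps q,S)$, the chain rule pairs each $\p_q$ derivative with an $\eps$, so every resulting factor is of the form $(\eps\p_t)^{j_1}(\eps q)\cdots(\eps\p_t)^{j_m}(\eps q)\cdot(\eps\p_t)^{i_1}S\cdots(\eps\p_t)^{i_n}S$, which is bounded uniformly in $\eps$ by the norms $\|q\|_{3,\eps}$ and $\|S\|_{3,\eps}$ appearing in $E(t)$ (together with Corollary \ref{lem_EnergyEsti_S} for $S$). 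Similarly, commutators of $(\eps\p_t)^k$ with $u\cdot\nabla$ and $(F_j+\BF_j)\cdot\nabla$ are multilinear forms in $(\eps\p_t)^{j}u$, $(\eps\p_t)^{j}F_j$ and spatial derivatives of the unknowns, all of which are controlled by $E(t)$ via standard product estimates (Lemma \ref{cor product}) after placing the highest-order factor in $L^2$ and the lower-order factors in $L^\infty$ (using Sobolev embedding $H^2(\Om)\hookrightarrow L^\infty(\Om)$ in 3D).

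The hard part, and the only place where some care is required, is the highest-order commutator at $k=3$: terms such as $(\eps\p_t)^3 a \cdot D_t q$ or $(\eps\p_t)^3 u \cdot \nabla q$ in $\RR_1^{(3)}$ involve $(\eps\p_t)^3 u$, $(\eps\p_t)^3 q$ and factors like $D_t q\sim O(\eps^{-1})$. One must rewrite $D_t q = -\eps^{-1}(\nabla\cdot u)/a$ using the continuity equation so that the apparent singularity cancels against the $\eps$-factor generated by the chain rule on $a(\eps q,S)$, leaving a product of quantities controlled by $E(t)$. Once these highest-order commutators are handled, summing over $k=0,1,2,3$ gives the desired estimate $\sum_{k=0}^3 \ddt(\|(\eps\p_t)^k(q,u)\|_0^2+\sum_j\|(\eps\p_t)^k F_j\|_0^2)\le P(E(t))$, where the implicit constants depend only on the fixed bounds $\rho\ge\bar\rho_0>0$ and $a>0$ from \eqref{EoS}.
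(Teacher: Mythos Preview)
Your approach is essentially identical to the paper's: differentiate the system by $(\eps\p_t)^k$, test against the differentiated unknowns, use the slip condition and the constraint $\nabla\cdot(\rho(F_j+\BF_j))=0$ to kill the singular and elastic cross terms, then bound the commutators by $P(E(t))$.

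The only place your write-up wobbles is the ``hard part'' paragraph. The term $(\eps\p_t)^3 a\cdot D_t q$ is \emph{not} handled by rewriting $D_t q=-(\eps a)^{-1}\nabla\cdot u$; that substitution still leaves an $O(\eps^{-1})$ factor and buys nothing. The correct bookkeeping is the one you already stated two sentences earlier: expanding $(\eps\p_t)^3 a(\eps q,S)$ by the chain rule, every term coming from the $q$-dependence carries an extra $\eps$ (so pairs with $D_t q$ to give $(\eps D_t q)$, bounded in $L^\infty$ via $\|q\|_{3,\eps}$), while every term coming from the $S$-dependence involves $\p_t^j S$ with $j\le 3$, which is bounded in $L^2$ \emph{without any $\eps$-weight} because $D_tS=0$ gives $\p_t S=-u\cdot\nabla S$ and one iterates. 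In the latter case the surplus $\eps^3$ from $(\eps\p_t)^3$ more than compensates for the $\eps^{-1}$ in $D_t q$. Likewise $(\eps\p_t)^3 u\cdot\nabla q$ involves $\nabla q$, not $D_t q$, so there is no singularity there at all. Once you correct this paragraph, the proof goes through exactly as in the paper.
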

		\begin{proof}
			For $k=0,\cdots, 3$, taking $\p_t^k$ of the first three equations of \eqref{CElasto3} gives
			\begin{equation}\label{equ_p_t^k V}
				\begin{aligned}
					& a D_t \p_t^k q + \eps^{-1}\nabla\cdot \p_t^k u= \mathcal{C}_q,\\
					& \rho D_t \p_t^k u + \eps^{-1} \nabla \p_t^k q= \rho\sum_{j=1}^{3} ( F_j + \BF_j ) \cdot\nabla F_j+\mathcal{C}_u,\\
					& \rho D_t \p_t^k F_j= \rho(F_j + \BF_j) \cdot\nabla \p_t^k u + \mathcal{C}_{F_j}.
				\end{aligned}
			\end{equation}
			Here, the commutators $(\mathcal{C}_q, \mathcal{C}_u, \mathcal{C}_{F_j})$ are given by
			\begin{equation}
				\begin{aligned}
					\mathcal{C}_q=&-[a D_t, \p_t^k] q,\\
					\mathcal{C}_u=&-[\rho D_t, \p_t^k]u + \sum_{j=1}^{3} [\rho(F_j+\BF_j)\cdot\nab, \p_t^k] F_j,\\
					\mathcal{C}_{F_j}=&-[\rho D_t,\p_t^k] F_j + [\rho(F_j+\BF_j)\cdot\nabla ,\p_t^k]u.
				\end{aligned}
			\end{equation}
			Multiplying \eqref{equ_p_t^k V} by $\eps^{2k}\p_t^k(q,u,F_j)$, integrating over $\Omega$, and integrating by parts give
			\begin{align}
				&\ddt \int_{\Om} a|(\eps\p_t)^k q |^2 + \rho|(\eps\p_t)^k u|^2 +\sum_{j=1}^{3} \rho |(\eps\p_t)^k F_j|^2 \dx \notag\\
				=&\int_{\Om} ( \p_t a + \nabla\cdot( a u ) )|(\eps\p_t)^k q|^2 \dx-\eps^{2k-1} \int_{\Om}\nabla\cdot( \p_t^k q \p_t^k u ) \dx \notag\\
				& + \sum_{j=1}^{3} \eps^{2k}\int_{\Om} \rho( F_j+\BF_j)\cdot\nabla \p_t^k F_j \cdot \p_t^k u  + \rho( F_j +\BF_j )\cdot\nabla\p_t^k u \cdot \p_t^k F_j \dx\notag\\
				&+\eps^{2k}\int_{\Om} \p_t^k q \cdot \mathcal{C}_q + \p_t^k u\cdot\p_t^k u + \sum_{j=1}^{3} \p_t^k F_j \cdot\mathcal{C}_{F_j} \dx.\label{EnergyEstimate_p_t^kV}
			\end{align}
			Since $a$ is a smooth function of $(\eps q, S)$, we get from the Sobolev inequality that
			\begin{equation*}
				\|\p_t a+ \nabla\cdot ( a u )\|_{\infty} \le P(\|(q,u,S)\|_3) \le P(E(t)).
			\end{equation*}
			As a result, the first term on the right-hand side of \eqref{EnergyEstimate_p_t^kV} can be bounded by
			\begin{equation}\label{Inter1}
				\int_{\Om} ( \p_t a + \nabla\cdot( a u ) )|(\eps\p_t)^k q|^2 \dx \le \|\p_t a+ \nabla\cdot ( a u )\|_{\infty} \|(\eps\p_t)^k q\|_0^2 \le P(E(t)).
			\end{equation}
			By using the boundary condition \eqref{bdry cond} and Stokes formula, we obtain
			\begin{equation}\label{Inter2}
				-\eps^{2k-1} \int_{\Om}\nabla\cdot( \p_t^k q \p_t^k u ) \dx=0.
			\end{equation}
			By using the boundary condition \eqref{bdry cond} and the fourth equation of \eqref{CElasto3}, we get
			\begin{align}
				& \sum_{j=1}^{3}\int_{\Om} \rho( F_j+\BF_j)\cdot\nabla \p_t^k F_j \cdot \p_t^k u  + \rho( F_j +\BF_j )\cdot\p_t^k u \cdot \p_t^k F_j \dx \notag\\
				=&\sum_{j=1}^{3}\int_{\Om} -\left( \rho( F_j+\BF_j)\cdot\nabla \p_t^k u +  \nabla\cdot(\rho( F_j+\BF_j)) \p_t^k u  \right) \cdot \p_t^k F_j  + \rho( F_j +\BF_j )\cdot\nabla\p_t^k u \cdot \p_t^k F_j \dx \notag\\
				=&~0.\label{Inter3}
			\end{align}
			Finally, we control the fourth term on the right-hand side of \eqref{EnergyEstimate_p_t^kV}. Invoking the concrete forms of the commutators in Lemma \ref{commutators}, we know that the highest-order terms in $[D_t,\p_t^k]$ contains at most $k$ copies of time derivatives and the number of all derivatives does not exceed $k$. Besides, when $\p_t$ falls on $a(\eps q,S)$ or $\rho(\eps q,S)$, there is no loss of $\eps$ weight as we have $\p_t S=-u\cdot\nab S$. Thus, there is no loss of weights of Mach number or the loss of derivatives. A straightforward product estimate then gives us
			\begin{equation}
				\|\eps^k[h\p_t,\p_t^k] f\|_0+ \| \eps^k[h u\cdot\nabla,\p_t^k] f\|_0+ \|\eps^k[h(F_j+\BF_j)\cdot\nabla,\p_t^k]f\|_0\le P( \|(q,u,S,f)\|_{3,\eps} ),
			\end{equation}where $h=h(h\eps q,S)$ is an arbitrary smooth function in its arguments.
			Based on the concrete form of the commutators $(\mathcal{C}_q, \mathcal{C}_u, \mathcal{C}_{F_j})$, we get
			\begin{equation}\label{Inter4}
				\eps^{2k}\int_{\Om} \p_t^k q \cdot \mathcal{C}_q + \p_t^k u\cdot\p_t^k u + \sum_{j=1}^{3} \p_t^k F_j \cdot\mathcal{C}_{F_j} \dx \le P(E(t)).
			\end{equation}
			Substituting \eqref{Inter1}, \eqref{Inter2}, \eqref{Inter3} and \eqref{Inter4} into \eqref{EnergyEstimate_p_t^kV} gives
			\begin{equation}
				\ddt \left(\|(\eps\p_t)^k(q,u)\|_0^2  +\sum_{j=1}^{3}\|(\eps\p_t)^k F_j\|_0^2  \right) \le P(E(t)).
			\end{equation}
			The proof is completed.
		\end{proof}

		\subsection{Closing the uniform estimates}

		Now, we can prove the following lemma by using the results we proved earlier. Recall that we define
		\begin{align*}
			E(t)=&\left\|(q,u,S)\right\|_{3,\eps}^2+\sum_{j=1}^3\left\|(F_j,(F_j+\BF_j)\cdot\nabla S)\right\|_{3,\eps}^2,\\
			E_1(t)=&\sum_{k=0}^{3}\|(\eps\p_t)^k(q,u,F_j)\|^2_0  + \|S\|_{3,\eps}^2 +\sum_{j=1}^3 \|(F_j+\BF_j)\cdot\nabla S\|_{3,\eps}^2+\|\nabla\times (\rho_0 u)\|_{2,\eps}^2 +\sum_{j=1}^3 \|\nabla\times (\rho_0 F_j)\|_{2,\eps}^2,\\
			\label{E2_energy}	E_2(t)=&~\|\nabla q\|_{2,\eps}^2 + \|\nabla\cdot u\|_{2,\eps}^2 +\sum_{j=1}^3 \|\nabla\cdot F_j\|_{2,\eps}^2 +\|\nabla\times u\|_{2,\eps}^2 +\sum_{j=1}^3 \|\nabla\times F_j\|_{2,\eps}^2,
		\end{align*}
		\begin{lem}\label{lem_Esti_E1E2}
			Under the assumptions of Theorem \ref{main thm, ill data}, we have
			\begin{align}
				&\ddt E_1(t) \le P(E(t)),\\
				\forall\delta\in(0,1)~~~	&E_2(t)\le P(E_1(t)) +\delta E(t) + P(E(0))+ P(E(t))\int_0^t P(E(\tau))\dtau ,
			\end{align}
		\end{lem}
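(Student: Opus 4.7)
The plan is to assemble Lemma \ref{lem_Esti_E1E2} as a direct consequence of the estimates proved in the preceding subsections, with essentially no new computation required.

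For the first inequality $\ddt E_1(t) \le P(E(t))$, I would decompose $E_1(t)$ into its four natural components and cite the corresponding result for each. The full-time-derivative part $\sum_{k=0}^{3}\|(\eps\p_t)^k(q,u,F_j)\|^2_0$ is controlled by Proposition \ref{lem full time}. The entropy pieces $\|S\|_{3,\eps}^2$ and $\|(F_j+\BF_j)\cdot\nabla S\|_{3,\eps}^2$ are handled by Corollary \ref{lem_EnergyEsti_S} (which in turn relies on the commutator identity $[D_t,\fjp]=0$ from Lemma \ref{lem_Trans Equ}). Finally, the curl quantities $\|\nabla\times (\rho_0 u)\|_{2,\eps}^2$ and $\|\nabla\times (\rho_0 F_j)\|_{2,\eps}^2$ are controlled by Lemma \ref{lem_EnergyEsti_curl}. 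Summing these three bounds produces $\ddt E_1(t)\le P(E(t))$ immediately.

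For the second inequality, I would separate $E_2(t)$ into its curl part and its div/pressure part. The terms $\|\nabla\times u\|_{2,\eps}^2$ and $\sum_j\|\nabla\times F_j\|_{2,\eps}^2$ are bounded by Corollary \ref{lem_Esti_curl}, which gives a bound of the form $(1+\sum_{l=0}^{k}\|(\eps\p_t)^l u\|_{2-k}^2)\,P(E_1(t))$; as noted in Remark \ref{rmk_Esti_curl}, the remaining normal-derivative terms can be reduced via the div--curl decomposition (Lemma \ref{lem divcurl}) and absorbed into $P(E_1(t))+P(E_2(t))$, which after a standard smallness absorption (using $\delta E(t)$) yields a bound by $P(E_1(t))+\delta E(t)$ plus lower-order data. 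The pressure and divergence part $\|\nabla q\|_{2,\eps}^2+\|\nabla\cdot u\|_{2,\eps}^2+\sum_j\|\nabla\cdot F_j\|_{2,\eps}^2$ is exactly the content of Proposition \ref{lem_Esti_div}, which supplies the $\delta E(t)+P(E(0))+P(E(t))\int_0^t P(E(\tau))\dtau$ contribution.

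The only non-trivial bookkeeping in assembling these pieces is the reduction step for the curl estimates in Corollary \ref{lem_Esti_curl}, which still contains the quantities $\|(\eps\p_t)^l u\|_{2-k}^2$ and $\|(\eps\p_t)^l F_j\|_{2-k}^2$ with residual normal derivatives. The plan is to invoke Lemma \ref{lem divcurl} once more to split these into an $L^2$-in-time-derivative piece (bounded by $E_1(t)$ via Proposition \ref{lem full time}), a divergence piece (controlled by Proposition \ref{lem_Esti_div}), and a curl piece (controlled inductively by Corollary \ref{lem_Esti_curl} at a lower order); the boundary trace vanishes thanks to $u_3=F_{3j}=0$ on $\Sigma$. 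Iterating this reduction a bounded number of times closes the curl contribution into the form $P(E_1(t))+\delta E(t)+P(E(0))+P(E(t))\int_0^t P(E(\tau))\dtau$.

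I anticipate no genuine obstacle here, since the previous subsections have already absorbed all of the delicate analysis: the entropy regularity trick, the vorticity structure via $\rho_0$, and the wave-equation-based reduction of normal derivatives of $q$. The final step is purely a matter of bundling together Proposition \ref{lem full time}, Corollaries \ref{lem_EnergyEsti_S} and \ref{lem_Esti_curl}, and Lemma \ref{lem_EnergyEsti_curl} for the first bound, and combining Corollary \ref{lem_Esti_curl} with Proposition \ref{lem_Esti_div} (and one application of Lemma \ref{lem divcurl} to eliminate residual normal derivatives) for the second bound. Once these are combined, Gr\"onwall's inequality applied to $E(t)\le C(E_1(t)+E_2(t))$ with a suitably small $\delta$ closes the uniform-in-$\eps$ estimate \eqref{energy estimate} of Theorem \ref{main thm, ill data}.
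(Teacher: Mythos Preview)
Your proposal is correct and follows essentially the same route as the paper: the first inequality is obtained by summing Corollary~\ref{lem_EnergyEsti_S}, Lemma~\ref{lem_EnergyEsti_curl}, and Proposition~\ref{lem full time}, and the second by combining Corollary~\ref{lem_Esti_curl} with Proposition~\ref{lem_Esti_div} and then iterating the div--curl reduction on the residual lower-order terms $\|(\eps\p_t)^l(u,F_j)\|_{2-k}^2$ until only pure time derivatives (part of $E_1$) remain. One small caution: your intermediate phrasing about absorbing a term of the form $P(E_2(t))$ via ``standard smallness absorption'' is not quite right, since $P$ is nonlinear; the correct mechanism---which you do describe afterwards---is to iterate the div--curl decomposition a bounded number of times so that no $E_2$-type quantity appears on the right at all.
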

		\begin{proof}
			The first inequality follows from Lemma \ref{lem_EnergyEsti_S}, Lemma \ref{lem_EnergyEsti_curl} and Proposition \ref{lem full time}. For the control of $E_2(t)$, Corollary \ref{lem_Esti_curl} and Proposition \ref{lem_Esti_div}  give us
			\begin{align}\label{reduce E2 1}
				E_2(t) \le& \left( 1+\sum_{k=0}^2\sum_{l=0}^{k}\|(\eps\p_t)^l u\|_{2-k}^2 \right)P(E_1(t))+ \delta E(t) + P(E(0))+ P(E(t))\int_0^t P(E(\tau))\dtau,
			\end{align}where the term $\sum\limits_{k=0}^2\sum\limits_{l=0}^{k}\|(\eps\p_t)^l u\|_{2-k}^2$ originates from the curl estimate in Corollary \ref{lem_Esti_curl}. When $k=2$, its has already been controlled in Proposition \ref{lem full time}. When $k=0,1$, we notice that such terms are lower order and thus we can continue to apply the div-curl analysis to them as in Corollary \ref{lem_Esti_curl} and Proposition \ref{lem_Esti_div} such that all the spatial deriatives are reduced to time derivatives. That is, we finally reach the following inequality
			\begin{align}\label{reduce E2 2}
				E_2(t) \le& \left( 1+\sum_{l=0}^{2}\|(\eps\p_t)^l u\|_{0}^2 \right)P(E_1(t))+ \delta E(t) + P(E(0))+ P(E(t))\int_0^t P(E(\tau))\dtau\notag\\
				\leq &~ P(E_1(t))+ \delta E(t) + P(E(0))+ P(E(t))\int_0^t P(E(\tau))\dtau
			\end{align}as desired, because $\sum\limits_{l=0}^{2}\|(\eps\p_t)^l u\|_{0}^2$ is already a part of $E_1(t)$.
		\end{proof}

		Finally, combining the estimates presented in Lemma \ref{lem_Esti_E1E2}, we obtain the Gr\"onwall-type inequality
		\begin{align*}
			\forall \delta\in(0,1),~~E(t)\lesssim &~E_1(t)+E_2(t)\leq \delta E(t) + P(E(0))+ P(E(t))\int_0^t P(E(\tau))\dtau.
		\end{align*}By selecting $\delta>0$ suitably small (independent of $\eps$) such that $\delta E(t)$ can be absorbed by the left side, we obtain the following uniform-in-$\eps$ estimate
		\[
		E(t)\lesssim P(E(0))+ P(E(t))\int_0^t P(E(\tau))\dtau.
		\]Thus, there exists a time $T>0$ that only depends on $E(0)$ and does not depend on $\eps$, such that
		\[
		\sup_{t\in[0,T]}E(t) \le P(E(0)),
		\]which completes the proof of Theorem \ref{main thm, ill data}.

		\section{Incompressible limit}\label{sect limit}
		In this section, we aim to prove the strong convergence for the solution of \eqref{CElasto3} and the limit is the incompressible inhomogeneous elastodynamic system \eqref{IElasto}. By the uniform estimate \eqref{energy estimate} and the equation
		\begin{align}\label{H1p_tcurl}
			D_t( \nabla\times(\rho_0 u) ) -\sum_{j=1}^{3}\nabla\times\left( \rho_0 (F_j+\BF_j)\cdot\nabla F_j \right) =[D_t,\nabla\times](\rho_0 u) + \nabla g \times\nabla q,
		\end{align}
		we get
		\begin{equation}
			\sup_{t\in[0,T]}\|(\p_t\nabla\times(\rho_0 u))(t)\|_1 \le P(E(0)).
		\end{equation}
		The uniform estimates \eqref{energy estimate} and \eqref{H1p_tcurl} imply that, up to a subsequence, we have the weak-* convergence for all variables and the strong convergence for the deformation tensor $F_j~(j=1,2,3)$, the entropy $S$ and the vorticity of the fluid velocity.
		\begin{align}
			(q,u,F_j,S) \to (q^0,u^0,F_j^0,S^0) &\text{ weakly-* in } L^{\infty}([0,T];H^3(\Omega)),\\
			(F_j,S)\to (F^0_j,S^0) &\text{ strongly in } C([0,T];H^{3-\delta}_{\mathrm{loc}}(\Omega)),\label{StrongLim_S_B}\\
			\nabla\times(\rho_0(S)u) \to \nabla\times(\rho_0(S^0)u^0) &\text{ strongly in } C([0,T];H^{2-\delta}_{\mathrm{loc}}(\Omega)),\label{StrongLim_curlrho0u}
		\end{align}
		with $\rho_0(S)=\rho(0,S)$. Similarly, we use $a_0(S)$ denote $a(0,S)$. In view of the div-curl inequality in Lemma \ref{lem divcurl}, we shall prove the strong convergence of $q$, the divergence $\nab\cdot u$ and the $L^2(\Om)$ norm of $u-u^0$.

		\subsection{Strong convergence of pressure and divergence}
		Before proving the strong convergence of $q$ and $\nab\cdot u$, we need to find out the expected value for their limits. We prove that $q^0=0$ and $\nabla\cdot u^0=0$. The first and second equation of \eqref{CElasto3} are written to 
		\begin{equation}
			E(\eps q, S) D_t U + \eps^{-1} L U= J,
		\end{equation}
		with
		\begin{equation*}
			E(\eps q, S)=\begin{pmatrix}
				a(\eps q, S) & 0 \\
				0 & \rho(\eps q, S) I_3
			\end{pmatrix},\quad L=\begin{pmatrix}
				0 &\nabla\cdot\\
				\nabla & 0
			\end{pmatrix},\quad U=\begin{pmatrix}
				q \\
				u
			\end{pmatrix},\quad J=-\begin{pmatrix}
				0\\
				\rho\sum\limits_{j=1}^{3}(F_j+\BF_j )\cdot\nabla F_j
			\end{pmatrix}.
		\end{equation*}
		First notice that
		\begin{equation}
			\eps E(\eps q, S) \p_t U + L U = -\eps E(\eps q, S) u\cdot\nabla U +\eps J.
		\end{equation}
		Using the uniform bounds and $E(\eps q,S)-E_0(S)=O(\eps)$, we obtain
		\begin{equation}\label{EuqU}
			\eps E_0(S)\p_t U + L U= \eps f,
		\end{equation} 
		where $E_0(S)= E(0,S)$ and $\{f\}_{\eps>0}$ is a bounded family in $C([0,T];H^2(\Omega))$. Passing to the weak limit shows that $\nabla q^0=0$ and $\nabla\cdot u^0$=0. Since $q^0\in L^\infty([0,T];H^3(\Omega))$ and $\Omega=\mathbb{R}^3_-$, we infer $q^0=0$.

		\begin{prop}\label{StrongLim_q_divu}
			Under the assumptions of Theorem \ref{main thm, limit}, we have
			\begin{align}
				q\to&~ 0 \text{ strongly in } L^2([0,T];H^{3-\delta}_{\mathrm{loc}}(\Omega)),\\
				\nabla\cdot u \to &~ 0 \text{ strongly in } L^2([0,T];H^{2-\delta}_{\mathrm{loc}}(\Omega)).\label{StrongLim_divu}
			\end{align}
		\end{prop}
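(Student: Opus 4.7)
The plan is to adapt the microlocal defect measure / local energy decay argument of M\'etivier--Schochet \cite{Metivier2001limit} and Alazard \cite{Alazard2005limit} to the elastodynamic wave equation \eqref{wave eq q1}. Rescaling time via $\tau = t/\eps$ converts \eqref{wave eq q1} into
\begin{align*}
a\, \p_\tau^2 q - \nab\cdot(\rho^{-1}\nab q) - \sum_{j=1}^3 a\,(\fjp)^2 q = O(\eps) \text{ in } H^2(\Om),
\end{align*}
with Neumann boundary condition $\p_3 q|_\Sigma = 0$, where the coefficients vary only on the slow scale $t = \eps\tau$. The uniform bound from Theorem \ref{main thm, ill data} gives a family $\{q^\eps\}$ bounded in $L^\infty_\tau H^3_x$, and the discussion preceding this proposition already identifies the weak limit as $q^0 = 0$.

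Next, I extract a subsequence and its associated semiclassical (Wigner) measure $\mu$ on $T^*(\R\times\Om)\setminus 0$. Standard propagation arguments show that $\mu$ is supported on the characteristic variety
\begin{align*}
\Sigma_\star = \Bigl\{ a_0(S)\,\omega^2 = \rho_0^{-1}(S)\,|\xi|^2 + a_0(S)\sum_{j=1}^3 \bigl| (F_j^0 + \BF_j)\cdot\xi \bigr|^2 \Bigr\}
\end{align*}
and is invariant under the associated Hamiltonian bicharacteristic flow. The flat Neumann boundary $\Sigma$ is handled by reflection: since $F_{3j}|_\Sigma = 0$ and $\BF_{3j} = 0$, the elastodynamic contribution to the symbol at $\Sigma$ involves only the tangential frequencies $\xi_1, \xi_2$, so $\xi_3^2$ appears only through the elliptic term $\rho_0^{-1}|\xi|^2$ and reflection is non-glancing outside a measure-zero set.

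The entropy decay hypothesis \eqref{EntropyDecay}, together with $D_t S = 0$ and the uniform $H^3$ bound on $u$, propagates the decay of $S$ and $\nab S$ at spatial infinity, while $F_j \in L^\infty_t H^3_x$ forces $F_j \to 0$ as $|x|\to\infty$. Hence the coefficients $a_0(S), \rho_0(S), F_j + \BF_j$ all converge to constants at infinity, making the wave operator asymptotically constant-coefficient. A classical bicharacteristic escape lemma then ensures every null ray exits any compact $K \Subset \overline{\Om}$ in finite rescaled time; since the rescaled interval $[0, T/\eps]$ diverges as $\eps \to 0$, this escape is unavoidable, and invariance of $\mu$ forces $\mu|_{[0,T]\times K} = 0$. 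This yields $q \to 0$ strongly in $L^2([0,T]; L^2_{\mathrm{loc}}(\Om))$, and interpolation with the uniform $H^3$ bound delivers the stated $L^2([0,T]; H^{3-\delta}_{\mathrm{loc}}(\Om))$ convergence.

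For $\nab\cdot u \to 0$, I use the continuity equation $\nab\cdot u = -\eps a D_t q$ and apply the same microlocal argument to $\eps D_t q^\eps$, which solves the time-differentiated wave equation and enjoys a uniform $H^2$ bound; equivalently, one can analyse the compressible component $\mathcal{Q} u^\eps$ coupled to $q^\eps$ via the Leray-projected momentum equation. Strong $L^2_{\mathrm{loc}}$ convergence to zero then combines with the uniform $H^2$ bound to give the stated $L^2([0,T]; H^{2-\delta}_{\mathrm{loc}}(\Om))$ convergence. The main obstacle is the elastodynamic term $a(\fjp)^2$ in the principal symbol: it makes the characteristic variety and the associated bicharacteristic flow anisotropic and $(t,x)$-dependent, but the boundary degeneracy $F_{3j}|_\Sigma = \BF_{3j} = 0$ keeps the reflection non-glancing, and the asymptotic constancy of the coefficients at spatial infinity preserves the dispersive escape property on which the whole argument rests.
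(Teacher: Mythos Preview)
Your rescaling in the first display is wrong, and this error propagates through the entire proposal. The elastodynamic term $\sum_j \eps^2 a(\fjp)^2 q$ in \eqref{wave eq q1} involves \emph{only spatial} derivatives, so under $\tau = t/\eps$ it remains $\eps^2 a(\fjp)^2 q$; it does \emph{not} become $a(\fjp)^2 q$. Only $\eps^2 D_t^2$ loses its $\eps^2$ prefactor, because $D_t$ contains $\p_t$. Hence the rescaled equation is
\[
a\,\p_\tau^2 q - \nab\cdot(\rho^{-1}\nab q) = O(\eps) \quad\text{in } H^2(\Om),
\]
with the elastodynamic contribution absorbed into the $O(\eps)$ right-hand side. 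Your anisotropic characteristic variety $\Sigma_\star$, the discussion of boundary degeneracy of $F_{3j}$ ensuring non-glancing reflection, and the paragraph describing the ``main obstacle'' are all artifacts of this miscalculation: the principal symbol is the isotropic acoustic one $a_0(S)\omega^2 - \rho_0^{-1}(S)|\xi|^2$, exactly as in the non-isentropic Euler case.

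Once this is corrected the problem reduces to Alazard's setting, but your proposed method is still not the one in the paper and carries unjustified steps. The paper does \emph{not} use bicharacteristic propagation or an escape-at-infinity argument. Instead it follows \cite{Metivier2001limit,Alazard2005limit}: apply the wave-packet transform $W^\eps$ in time to the first-order system $\eps E_0(S)\p_t U + LU = \eps f$ (so the elasticity sits harmlessly in the $O(\eps)$ source from the outset), obtain $P^\eps(t,\tau,\nab)(W^\eps\tilde q)=\text{small}$ with $P^0(t,\tau,\nab)=a_0(\tilde S^0)\tau^2+\nab\cdot(\rho_0^{-1}(\tilde S^0)\nab)$, decompose $W^\eps\tilde q$ via $(1-\lap_N)^{-1}$ plus a Neumann extension, and invoke the operator-valued defect-measure lemma together with the kernel triviality $\ker_{L^2}\bigl(P^0(t,\tau,\nab)(1-\lap_N)^{-1}\bigr)=\{0\}$ from \cite[Lemma~5.1]{Metivier2001limit}. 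The decay hypothesis \eqref{EntropyDecay} enters precisely in this spectral step (a Rellich-type absence of embedded eigenvalues), not via asymptotic flatness of a Hamiltonian flow. Your geometric-control route would additionally require a non-trapping argument for the variable-coefficient acoustic metric and a boundary microlocal analysis (Melrose--Sj\"ostrand) near glancing, neither of which you supply.
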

		
		\begin{proof}
			This is a slight variant in Alazard \cite[Prop. 3.1]{Alazard2005limit}, so we will only outline the proof and skip some technical details that are identical to \cite[Prop. 3.1]{Alazard2005limit}. 
			\paragraph*{Step 1: Wave-packet transform. }To get the strong compactness in time, the idea is to construct the defect measures associated to the sequence $q$ and $\nabla\cdot u$ and then prove they vanish. The first step is doing the wave-packet transform to convert the time variable to frequency variable. More precisely, one first extends the
			functions to $t\in \mathbb{R}$ by
			\begin{equation}
				\tilde{U}=\begin{pmatrix}
					\tilde{q}\\
					\tilde{u}
				\end{pmatrix}=\chi_{\eps} U =\begin{pmatrix}
					\chi_{\eps} q\\
					\chi_{\eps}u
				\end{pmatrix},
			\end{equation}
			where $\chi_{\eps}\in C_0^\infty((0,T))$ be a family of functions such that $\chi_{\eps}(t)=1$ for $t\in [\eps^{1/2},T-\eps^{1/2}]$ and $\|\eps\p_t \chi_{\eps}\|_{\infty} \le 2\eps^{1/2}$, and choose extensions $\tilde{S}$ of $S$, supported in $t\in [-1,T+1]$, uniformly bounded in $C(\mathbb{R};H^3(\Omega))$, and converging to $\tilde{S}^0$ in $C(\mathbb{R};H^{3-\delta}_{\mathrm{loc}}(\Omega))$. According to \eqref{EuqU}, $\tilde{U}$ satisfies
			\begin{equation}\label{EuqTildeU}
				\eps E_0(\tilde{S})\p_t \tilde{U} + L \tilde{U}= \eps \tilde{f},
			\end{equation} 
			where $\{\tilde{f}\}_{\eps>0}$ is a bounded family in $C(\mathbb{R};H^2(\Omega))$. By using the wave packet transform:
			\begin{equation}
				W^{\eps} v(t,\tau,x)= (2\pi^3)^{-1/4}\eps^{-3/4} \int_{\mathbb{R}} e^{(\mathrm{i} (t-s)\tau -(t-s)^2 )/\eps } v(s,x) \mathrm{d} s,
			\end{equation}
			where $v\in C^1(\mathbb{R}\times\bar{\Om}) \cap L^2(\mathbb{R}\times \Omega)$,  $W^{\eps} v\in C^1(\mathbb{R}^2_{t,\tau}\times\bar{\Om}) \cap L^2(\mathbb{R}^2_{t,\tau}\times \Omega)$ and $W^{\eps}$ extends as an isometry from $L^2(\mathbb{R}\times\Omega)$ to $L^2(\mathbb{R}^2_{t,\tau}\times\Omega)$, \eqref{EuqTildeU} can be written in $\mathbb{R}^2_{t,\tau}\times \Omega$ as
			\begin{equation}\label{EquWTildeU}
				\mathrm{i}\tau E_0(\tilde{S})( W^{\eps}\tilde{U}) + L ( W^{\eps}\tilde{U} ) = \boldsymbol{G}^{\eps},
			\end{equation}
			where 
			\begin{align*}
				\boldsymbol{G}^{\eps}=&~\eps W^{\eps} \tilde{f} + [ E_0(\tilde{S}),W^{\eps} ](\eps\p_t) \tilde{U} + E_0(\tilde{S}) ( \mathrm{i}\tau W^{\eps} \tilde{U}- W^{\eps} (\eps\p_t \tilde{U} )  ) \\
				:=&~( G^{\eps}_1, \boldsymbol{G}_2^{\eps} ) \in L^2(\mathbb{R}^2_{t,\tau};H^1(\Omega)  ) \times L^2(\mathbb{R}^2_{t,\tau};(H^1(\Omega))^3 ).
			\end{align*}
			Following \cite[Lemma 3.3]{Alazard2005limit}, one can show that
			\begin{equation}\label{FepsTo0}
				\boldsymbol{G}^{\varepsilon}\to 0 \text{ in } L^2(\mathbb{R}^2_{t,\tau};H^1(\Omega)) \text{ as } \varepsilon\to 0.
			\end{equation}
			\paragraph*{Step 2: Decomposition of the pressure.}
			Since the wave-packet transform is an isometry between $L^2(\R_{t,\tau}^2\times\Om)$ and $L^2(R\times\Om)$, it suffices to prove the strong convergence for $W^\eps \tilde{q}$.  From \eqref{EquWTildeU}, we obtain
			\begin{equation}
				P^{\eps}(t,\tau,\nabla) (W^{\eps}\tilde{q})= -\mathrm{i}\tau G_1^{\eps} + \nabla\cdot( \rho_0^{-1}(\tilde{S}) \boldsymbol{G}_2^{\eps}  ).
			\end{equation}
			where
			\begin{align}
				P^{\eps}(t,\tau,\nabla) (\cdot):=& a_0(\tilde{S})\tau^2 (\cdot) + \nabla\cdot ( \rho_0^{-1}(\tilde{S}) \nabla(\cdot) ),\\
				P^{0}(t,\tau,\nabla) (\cdot):=& a_0(\tilde{S}^0)\tau^2 (\cdot) + \nabla\cdot ( \rho_0^{-1}(\tilde{S}^0) \nabla(\cdot) ).
			\end{align}
			Since we are now considering a boundary-value problem, we shall decompose $W^\eps \tilde{q}$ into its interior part and the boundary part. Following Alazard \cite[Section 3.2]{Alazard2005limit}, we have
			\begin{align}\label{decompose Wq}
				W^\eps \tilde{q} = (1-\lap_N)^{-1}\Theta + \mathfrak{N} (\boldsymbol{G}_2^\eps\cdot N),
			\end{align}where $\Theta:=(1-\lap)(W^\eps\tilde{q})$ and $(1-\Delta_N)^{-1}$ is defined by
			\begin{equation*}
				f=(1-\Delta_N)^{-1}g \text{ if and only if } (1-\Delta)f=g \text{ in }\Omega,
				\text{ and } \p_N f=0 \text{ on }\Sigma;	
			\end{equation*}and $\mathfrak{N}$ is defined by
			\begin{equation*}
				h=\mathfrak{N}(g) \text{ if and only if } (1-\Delta)h=0 \text{ in }\Omega,
				\text{ and } \p_N h=g \text{ on }\Sigma.
			\end{equation*} It should be noted that $(1-\lap_N)^{-1}$ is a bounded linear operator from $L^2(\Om)$ to $H^2(\Om)$ and $\mathfrak{N}$ is a bounded linear operator from $H^{\frac12}(\Sigma)$ to $H^2(\Om)$.
			
			\paragraph*{Step 3: Strong convergence.}
			The strong convergence of $\boldsymbol{G}_2^\eps$ in \eqref{FepsTo0} implies that
			\begin{align}
				\mathfrak{N} (\boldsymbol{G}_2^\eps\cdot N)\to 0~~\text{ in }L^2(\R_{t,\tau}^2; H^2(\Om)),
			\end{align}and also we have for any $\varphi\in C_0(\R^2)$ and any compact operator $K$ on $L^2(\Om)$ that
			\begin{align}
				\varphi KP^\eps(t,\tau,\nab)\mathfrak{N} (\boldsymbol{G}_2^\eps\cdot N)\to 0~~\text{ in }L^2(\R_{t,\tau}^2\times\Om).
			\end{align}
			To prove the strong convergence of $\Theta$, which is now only a uniformly bounded family in $L^2(\R^2\times\Om)$, we need the following two lemmas.
			\begin{lem}[M\'etivier-Schochet {\cite[Lemma 4.3]{Metivier2001limit}}]\label{lem MS}
				For all uniformly bounded family $\{\Theta^\eps\}\subset L^2(\R^{2+d})$, there is a subsequence such that there exists a finite non-negative Borel measure $\mu$ on $\R^2$ and $M\in L^1(\R^2,\mathcal{L}_+,\mu)$ such that for all $\Phi\in C_0(\R^2;\mathcal{K})$,
				\[
				\int_{\R^2}\left( \Phi \Theta^\eps, \Theta^\eps \right)_{L^2}\dt\dtau \xrightarrow{\eps \to 0}\int_{\R^2} \text{Tr}(\Phi(t,\tau)M(t,\tau))\mu(\dt,\dtau).
				\]Here $\mathcal{K}$ ($\mathcal{L}_+$, resp.) denotes the set of compact operators (non-negative self-adjoint trace class operators, resp.) on $L^2(\Om)$.
			\end{lem}
			
			\begin{lem}[M\'etivier-Schochet {\cite[Lemma 5.1]{Metivier2001limit}}]\label{lem ker}
				The operator $P^0(t,\tau,\nab)(1-\lap_N)^{-1}=0$ is a 1-1 mapping for any $(t,\tau)\in\R^2$, that is,
				\begin{align}
					\ker_{L^2(\Omega)} ( P^0(t,\tau,\nabla)(1-\Delta_N)^{-1}  ) =0, \quad \forall (t,\tau)\in \mathbb{R}^2\label{KerP0}
				\end{align}
			\end{lem}
			\begin{rmk}
				The entropy decay condition \eqref{EntropyDecay} is necessary in the proof of \cite[Lemma 5.1]{Metivier2001limit}. This also explains why we require the domain $\Om$ to be unbounded, for example, the half space.
			\end{rmk}
			Let $M(t,\tau)$ be the trace-class operator and $\mu$ be the microlocal defect measure obtained in Lemma \ref{lem MS} by inserting $\Theta^\eps$ defined in \eqref{decompose Wq}. Then we can prove
			\begin{cor}\label{cor ker}
				\begin{align}
					M(t,\tau)=0 \quad \mu\text{-a.e.} \label{P0Ma.e.}
				\end{align}
			\end{cor}
			\begin{proof}[Proof of Corollary \ref{cor ker}]
				Following  \cite[Prop. 3.4]{Alazard2005limit}, we can prove that
				\[
				\varphi K \left(P^0(t,\tau,\nab)(1-\lap_N)^{-1}\Theta\right)\to 0~~\text{ in }L^2(\R_{t,\tau}^2\times\Om).
				\]Thus, we set $\Phi(t,\tau):=\varphi(t,\tau)KP^0(t,\tau,\nab)(1-\lap_N)^{-1}$ in Lemma \ref{lem MS} with $\varphi\in C_0(\R^2)$ and $K\in\mathcal{K}$ to get
				\[
				\int_{\R^2}\left(\varphi K P^0(t,\tau,\nab)(1-\lap_N)^{-1}\Theta, \Theta \right)_{L^2(\Om)}\dt\dtau\xrightarrow{\eps \to 0} 0,
				\]which together with Lemma \ref{lem ker} forces 
				\[
				\int_{\R^2} \text{Tr}(\varphi K P^0(t,\tau,\nab)(1-\lap_N)^{-1}M(t,\tau))\mu(\dt,\dtau)=0.
				\]Since $\varphi$ and $K$ are arbitrary, we get $P^0(t,\tau,\nabla)(1-\Delta_N)^{-1} M(t,\tau)=0$ for $\mu$-a.e. $(t,\tau)\in\R^2$. Since $M(t,\tau)$ is a bounded operator on $L^2(\Om)$, we know Lemma \ref{lem ker} then leads to $M(t,\tau)=0$ for $\mu$-a.e. $(t,\tau)\in\R^2$.
			\end{proof}
			
			We now set $\Phi(t,\tau)=\varphi(t,\tau)K^*K$ for $\varphi\in C_0(\R^2)$ and $K\in\mathcal{K}$ in Lemma \ref{lem MS} to get
			\begin{equation}
				\varphi K\Theta^{\eps} \to 0 \text{ in } L^2(\mathbb{R}^2_{t,\tau}\times\Omega)
			\end{equation}holds for any $K\in \mathcal{K}$ and $\varphi\in C_0(\R^2)$.
			Following the arguments in \cite[(3.23)-(3.24)]{Alazard2005limit}, we prove this convergence holds for $\varphi(t,\tau)=1$, i.e. for any $K\in \mathcal{K}$,
			\begin{equation}\label{ThetaCovergence1}
				K \Theta^{\eps} \to 0 \text{ in } L^2(\mathbb{R}^2_{t,\tau}\times\Omega).
			\end{equation}
			Recall that, by the definition of $\Theta=(1-\Delta) (W^{\eps} q)$, $W^{\eps}$ is an isometry from $L^2(\mathbb{R}_t\times\Omega)$ to $L^2(\mathbb{R}^2_{t,\tau}\times \Omega)$, and $W^{\eps}$ commutes with $K(1-\Delta)$. So \eqref{ThetaCovergence1} implies that for any $K\in \mathcal{K}$,
			\begin{equation}\label{qConvergence1}
				K(1-\Delta)\tilde{q}\to 0 \text{ in } L^2(\mathbb{R}\times\Omega).
			\end{equation}
			Given that $\tilde{q}$ is bounded in $L^2(\mathbb{R};H^3(\Omega))$, the convergence \eqref{qConvergence1} implies
			\begin{equation}
				\tilde{q}\to 0 \text{ in } L^2(\mathbb{R};H^{3-\delta}_{\mathrm{loc}}(\Omega)).
			\end{equation}
			Since the limit is $0$, the convergence holds without passing a subsequence. We end up with
			\begin{equation}
				q\to 0 \text{ in } L^2([0,T];H^{3-\delta}_{\mathrm{loc}}(\Omega)).
			\end{equation}
			Arguments similar to those above show that, from \eqref{EquWTildeU}, we have that
				\begin{equation}
					\eps\p_t q\to 0 \text{ in } L^2([0,T];H^{2-\delta}_{\mathrm{loc}}(\Omega)),
				\end{equation}
			which deduces that
			\begin{equation}
				\nabla \cdot u = -a D_t q\to 0 \text{ in } L^2([0,T];H^{2-\delta}_{\mathrm{loc}}(\Omega)).
			\end{equation}
		\end{proof}
		
		\subsection{The limit process to the incompressible inhomogeneous elastodynamic system}
		
		We continue our proof of Theorem \ref{main thm, limit}. It now remains to prove the strong convergence of $u-u^0$. Recall that $\mathcal{P}$ be the projection onto $H_{\sigma}$ and $\mathcal{Q}= I_3 - \mathcal{P}$, where $H_\sigma=\{u\in L^2(\Omega): \int_{\Om} u\cdot\nabla\phi,\ \forall \phi\in H^1(\Omega)\}$ and $G_{\sigma}=\{\nabla\psi: \psi\in H^1(\Omega)\}$ give the orthogonal decomposition $L^2(\Omega)=H_{\sigma} \oplus G_{\sigma}$.
		
		From the strong convergence results \eqref{StrongLim_curlrho0u} and \eqref{StrongLim_divu}, we know that
		\begin{align}
			\mathcal{P}( \rho_0(S)u )\to \mathcal{P}( \rho_0(S^0)u^0 ) &\text{ in } L^2([0,T];H^{3-\delta}_{\mathrm{loc}}(\Omega)),\\
			\mathcal{Q}u\to \mathcal{Q}u^0=0 &\text{ in } L^2([0,T];H^{3-\delta}_{\mathrm{loc}}(\Omega)).\label{StrongLim_Qu}
		\end{align}
		The previous two properties yields further that:
		\begin{align}
			\mathcal{P}( \rho_0(S)\mathcal{P}u ) \to \mathcal{P}(\rho_0(S^0)\mathcal{P}u^0) &\text{ in } L^2([0,T];H^{3-\delta}_{\mathrm{loc}}(\Omega)),\\
			\mathcal{P}( \rho_0(S)\mathcal{Q}u ) \to \mathcal{P}(\rho_0(S^0)\mathcal{Q}u^0)=0 &\text{ in } L^2([0,T];H^{3-\delta}_{\mathrm{loc}}(\Omega)),
		\end{align}
		which, combined with the fact $S\to S^0$ in $C([0,T];H^{3-\delta}_{\mathrm{loc}}(\Omega))$, imply that:
		\begin{align*}
			\mathcal{P}( \rho_0(S^0)\mathcal{P}(u-u^0) )=& ~\mathcal{P}( \rho_0(S^0)[ (u-u^0)- \mathcal{Q}u ]  ) \\
			=&~\mathcal{P}\left(  \rho_0(S)(u-u^0) + ( \rho_0(S^0)-\rho_0(S) )(u-u^0) - \rho_0(S) \mathcal{Q} u + (\rho_0(S^0)-\rho_0(S))\mathcal{Q} u  \right)\\
			\to &~0 \text{ in } L^2([0,T];H^{3-\delta}_{\mathrm{loc}}(\Omega)).
		\end{align*}
		Now, we recall that $\rho_0(S^0)$ is strictly positive in $[0,T]\times \Omega$ and $\mathcal{P}u\to \mathcal{P}u^0= u^0$ in $L^2([0,T];H^{3-\delta}_{\mathrm{loc}}(\Omega))$, which together with \eqref{StrongLim_Qu} implies that
		\begin{equation}\label{StrongLim_u}
			u\to u^0 \text{ in } L^2([0,T];H^{3-\delta}_{\mathrm{loc}}(\Omega)).
		\end{equation}
		By \eqref{StrongLim_S_B} and \eqref{StrongLim_u}, we obtain
		\begin{align*}
			\rho(\eps q,S)\to \rho_0(S^0) &\text{ in } C([0,T];H^{3-\delta}_{\mathrm{loc}}(\Omega)),\\
			\nabla u\to \nabla u^0,~~ \nabla F_j\to \nabla F_j^0&\text{ in } L^2([0,T];H^{2-\delta}_{\mathrm{loc}}(\Omega)).
		\end{align*}
		Passing to the limit in the equations for $S$ and $F_j$, we see that the limits $S^0$ and $F_j^0$ satisfy
		\begin{equation*}
			(\partial_t +u^0\cdot\nabla) S^0=0,\quad (\partial_t + u^0\cdot\nabla)F_j^0 = (F_j^0+\BF_j) \cdot\nabla u^0,\quad \nabla\cdot (\rho_0(S^0)( F_j^0+\BF_j ))=0
		\end{equation*}
		in the sense of distributions. Since $\rho(\eps q, S)- \rho_0(S)=O(\eps)$, we have
		\begin{align*}
			\rho(\eps q, S) D_t u= (\rho(\eps q, S)- \rho_0(S))D_t u + \p_t ( \rho_0(S)u ) + (u\cdot \nabla)( \rho_0(S) u )\to ~\rho_0(S^0) ( (\p_t +u^0\cdot\nabla)u^0  )
		\end{align*}
		in the sense of distributions. Applying the operator $\mathcal{P}$ to the momentum equation $\rho D_t u + \eps^{-1}\nabla q =\rho\sum\limits_{j=1}^{3}(F_j+\BF_j)\cdot\nabla F_j$ and then taking to the limit, we conclude that
		\begin{equation*}
			\mathcal{P}\left[  \rho_0(S^0)( (\p_t +u^0\cdot\nabla)u^0 -\rho_0(S^0)\sum_{j=1}^{3}(F_j^0+\BF_j)\cdot\nabla F_j^0 \right]=0.
		\end{equation*}
		Therefore, $(u^0,F_j^0,S^0)\in C([0,T];H^3(\Om))$ solves the incompressible inhomogeneous elastodynamic equations together with a transport equation
		\begin{equation}
			\begin{cases}
				\varrho(\p_t u^0 + u^0\cdot\nab u^0) + \nab \pi =\varrho\sum\limits_{j=1}^{3}(F_j^0+\BF_j)\cdot\nabla F_j^0&~~~ \text{in}~[0,T]\times \Omega,\\
				\p_t F_j^0=(F_j^0+\BF_j)\cdot\nabla u^0 &~~~ \text{in}~[0,T]\times \Omega,\\
				\nab\cdot u^0=0,\quad \nab\cdot (\varrho (F_j^0+\BF_j))=0&~~~ \text{in}~[0,T]\times \Omega,\\
				\p_t S^0+u^0\cdot\nab S^0=0&~~~ \text{in}~[0,T]\times \Omega,\\
				u_3^0=F_{3j}^0=0&~~~\text{on}~[0,T]\times\Sigma,
			\end{cases}
		\end{equation}
		for a suitable fluid pressure function $\pi$ satisfying $\nab\pi\in C([0,T];H^2(\Om))$. Here $\varrho$ satisfies $\p_t\varrho+u^0\cdot\nab\varrho=0,$ with initial data $\varrho_0:=\rho(0,S_0^0)$. Employing the arguments in \cite[Theorem 1.5]{Metivier2001limit}, we find that 
		\begin{equation*}
			(u^0,F_j^0,S^0)|_{t=0}=(w_0, F_{j,0}^0,S^0_0),
		\end{equation*}
		where $w_0 \in H^3(\Omega)$ is determined by
		\begin{equation*}
			w_{03}|_{\Sigma}=0,\quad \nabla\cdot w_0=0,\quad \nabla\times( \rho_0(S_0^0)w_0 )=\nabla\times( \rho_0(S_0^0)u_0^0 ).
		\end{equation*}
		Moreover, the uniqueness of the limit function implies that the convergence holds as $\eps\to 0$ without restricting to a subsequence. Theorem \ref{main thm, limit} is then proven.
		
		\paragraph*{Acknowledgment.} The research of Jiawei Wang is supported by the National Natural Science Foundation of China (Grant 12131007) and the Basic Science Center Program (No: 12288201) of the National Natural Science Foundation of China.
		
		\begin{appendix}
		\section{Preliminary lemmas}\label{sect lemma}
		In this section, we record several lemmas that are repeatedly used throughout this manuscript. The first lemma records the div-curl decomposition for a vector field.
		\begin{lem}[Hodge-type elliptic estimates]\label{lem divcurl}
			For any sufficiently smooth vector field $X$ and any real number $s\ge 1$, we have
			\begin{equation}\label{elliptic estimates}
				\|X\|_{s}^2 \lesssim \|X\|_0^2 + \|\nabla\cdot X\|_{s-1}^2 + \|\nabla\times X\|_{s-1}^2+ |X\cdot N|_{s-\frac12}^2,
			\end{equation}
		\end{lem}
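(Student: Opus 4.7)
The plan is to prove the estimate in three stages: the base case $s=1$ by $L^2$ integration by parts, an inductive step to integer $s\geq 2$ using tangential differentiation together with algebraic recovery of normal derivatives, and Sobolev interpolation for non-integer $s$. This is a well-known result (see e.g.\ Cheng--Shkoller \cite{CSdivcurl}); I sketch it on the flat half-space $\Om=\R^3_-$, where $N=(0,0,1)$.

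For $s=1$, I would start from the identity
$$\sum_{i,j}(\p_i X_j)^2 \,=\, \sum_{i,j}\p_i X_j\,\p_j X_i + |\nab\times X|^2,$$
which follows from $\epsilon_{kij}\epsilon_{kmn}=\delta_{im}\delta_{jn}-\delta_{in}\delta_{jm}$, together with the rearrangement $\sum_{i,j}\p_i X_j\,\p_j X_i = \sum_{i,j}\p_i(X_j\p_j X_i) - \sum_j\p_j(X_j(\nab\cdot X)) + |\nab\cdot X|^2$. Integrating over $\Om$ produces
$$\|\nab X\|_0^2 \,=\, \|\nab\cdot X\|_0^2 + \|\nab\times X\|_0^2 + \is \bigl(X_j\p_j X_3 - X_3(\nab\cdot X)\bigr)\dS.$$
A direct expansion shows the surface integrand reduces to $X_1\p_1 X_3+X_2\p_2 X_3 - X_3\p_1 X_1 - X_3\p_2 X_2$ (the $X_3\p_3 X_3$ contributions cancel). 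Since $\Sigma$ has no boundary, integration by parts along $\Sigma$ in the tangential direction converts each summand into a pairing of $\cp(X\cdot N)$ with a tangential component of $X$, hence bounded by $|X\cdot N|_{1/2}\,|X|_{1/2} \leq \delta\|X\|_1^2 + C_\delta |X\cdot N|_{1/2}^2$ via the trace inequality $|X|_{1/2}\lesssim \|X\|_1^{1/2}\|X\|_0^{1/2}$ and Young; the $\delta$ term is absorbed.

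For integer $s\geq 2$, I would argue by induction. Because $N$ is constant, every tangential derivative $\cp\in\{\p_1,\p_2\}$ commutes with the three operators of interest:
$$\nab\cdot(\cp X)=\cp(\nab\cdot X),\q \nab\times(\cp X)=\cp(\nab\times X),\q (\cp X)\cdot N = \cp(X\cdot N).$$
Applying the inductive hypothesis to $\cp X$ therefore controls every derivative of $X$ containing at least one tangential factor. The purely normal derivatives are recovered algebraically from
$$\p_3 X_3 = \nab\cdot X - \p_1 X_1 - \p_2 X_2,\q \p_3 X_1 = \p_1 X_3 - (\nab\times X)_2,\q \p_3 X_2 = \p_2 X_3 + (\nab\times X)_1,$$
and iterated: each $\p_3$ on $X$ is rewritten as a tangential derivative of $X$ plus one derivative of $\nab\cdot X$ or $\nab\times X$, whose $H^{s-1}$ norms are already on the right-hand side. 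For real $s>1$ with $s$ not an integer, standard Sobolev interpolation between $\lfloor s\rfloor$ and $\lfloor s\rfloor+1$ completes the proof.

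The main obstacle I anticipate is the bookkeeping of the boundary integrals in the base case: several distinct cancellations (the $X_3\p_3 X_3$ cancellation on $\Sigma$, and the tangential integration by parts that moves $\cp$ onto $X\cdot N$) must all occur for the surface contribution to collapse to $|X\cdot N|_{1/2}^2$ rather than produce uncontrollable traces of the tangential components of $X$. These cancellations are specific to a flat boundary with constant $N$; for a curved $\Sigma$ one must track Christoffel-type terms generated by $\cp N\neq 0$, which is why the $H^{3.5}$ regularity of $\Sigma$ is invoked in the remark following Theorem \ref{main thm, limit}.
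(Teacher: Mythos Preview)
The paper does not actually prove this lemma: it is stated in Appendix~\ref{sect lemma} as a preliminary result without proof, with the reference to Cheng--Shkoller \cite{CSdivcurl} appearing only in the remark after Theorem~\ref{main thm, limit}. Your sketch is a correct and standard proof on the flat half-space; the base-case boundary computation, the algebraic recovery of $\p_3 X$ from $\nab\cdot X$ and $\nab\times X$, and the tangential induction are all sound. One minor caveat: the interpolation step for non-integer $s$ is not entirely trivial, since one must check that the spaces on the right-hand side (mixing interior and trace norms) form an interpolation scale, but this is a known fact and in any case the paper only invokes the lemma at integer orders $s=1,2,3$.
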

		
		The next lemma records Kato-Ponce type multiplicative Sobolev inequalities.
		\begin{lem}[{\cite{KatoPonce1988}}, Kato-Ponce type inequalities] \label{KatoPonce} For any $s\geq 0$, we have 
			\begin{equation}\label{product}
				\begin{aligned}
					\|fg\|_{H^s}&\lesssim \|f\|_{W^{s,p_1}}\|g\|_{L^{p_2}}+\|f\|_{L^{q_1}}\|g\|_{W^{s,q_2}},\\
					\|fg\|_{\dot{H}^s}&\lesssim \|f\|_{\dot{W}^{s,p_1}}\|g\|_{L^{p_2}}+\|f\|_{L^{q_1}}\|g\|_{\dot{W}^{s,q_2}},
				\end{aligned}
			\end{equation}with $1/2=1/p_1+1/p_2=1/q_1+1/q_2$ and $2\leq p_1,q_2<\infty$.
		\end{lem}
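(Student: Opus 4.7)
The plan is to prove the Kato--Ponce inequalities via Littlewood--Paley decomposition and Bony's paraproduct calculus. First I would fix a Littlewood--Paley partition of unity and write $f=\sum_j\Delta_j f$, $g=\sum_k\Delta_k g$, where $\Delta_j$ is the frequency projection onto the dyadic annulus $|\xi|\sim 2^j$ and $S_j=\sum_{k\leq j}\Delta_k$. Following Bony, I decompose
\[
fg = T_f g + T_g f + R(f,g),
\]
with low-high paraproduct $T_f g := \sum_j S_{j-2}f\cdot\Delta_j g$, its symmetric counterpart $T_g f$, and high-high remainder $R(f,g) := \sum_{|j-k|\leq 2}\Delta_j f\cdot\Delta_k g$. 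Using the square-function characterization $\|h\|_{H^s}^2\sim\sum_n 2^{2ns}\|\Delta_n h\|_{L^2}^2$, I would bound each paraproduct separately, then convert the resulting $\ell^2$-weighted sums of Littlewood--Paley pieces back into Sobolev/Lebesgue norms.

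For $T_f g$, the summand $S_{j-2}f\cdot\Delta_j g$ is spectrally supported near $|\xi|\sim 2^j$, so after applying $\Delta_n$ only the terms with $|n-j|\leq O(1)$ survive. Hölder with the split $\tfrac12=\tfrac{1}{q_1}+\tfrac{1}{q_2}$ and the uniform $L^{q_1}$-boundedness of $S_{j-2}$ (a Mihlin multiplier) yields
\[
\|T_f g\|_{H^s}^2\lesssim \|f\|_{L^{q_1}}^2\sum_j 2^{2js}\|\Delta_j g\|_{L^{q_2}}^2\lesssim \|f\|_{L^{q_1}}^2\|g\|_{W^{s,q_2}}^2,
\]
where the final step uses the Littlewood--Paley characterization of $W^{s,q_2}$, valid since $2\leq q_2<\infty$. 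The symmetric piece $T_g f$ is handled identically and furnishes the $\|f\|_{W^{s,p_1}}\|g\|_{L^{p_2}}$ contribution.

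The hard part is the high-high remainder $R(f,g)$, because $\Delta_j f\cdot\Delta_k g$ with $j\sim k$ produces frequencies spread over the entire range $|\xi|\lesssim 2^j$ rather than being localized at $2^j$. Setting $\widetilde\Delta_j := \sum_{|k-j|\leq 2}\Delta_k$, I would write $\Delta_n R(f,g) = \sum_{j\geq n-C}\Delta_n(\Delta_j f\cdot\widetilde\Delta_j g)$ and apply Hölder with $\tfrac12=\tfrac{1}{p_1}+\tfrac{1}{p_2}$ to get $\|\Delta_n R(f,g)\|_{L^2}\lesssim \sum_{j\geq n-C}\|\Delta_j f\|_{L^{p_1}}\|\widetilde\Delta_j g\|_{L^{p_2}}$. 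Multiplying by $2^{ns}$, the hypothesis $s\geq 0$ allows me to insert $2^{ns}\leq 2^{js}$ into the sum, after which a discrete Young/Minkowski inequality against the geometric tail $\sum_{j\geq n}2^{-(j-n)\cdot 0^+}$ reduces the estimate to $(\sum_j 2^{2js}\|\Delta_j f\|_{L^{p_1}}^2)^{1/2}\cdot\sup_j\|\widetilde\Delta_j g\|_{L^{p_2}}$, bounded by $\|f\|_{W^{s,p_1}}\|g\|_{L^{p_2}}$. Combining the three pieces gives the inhomogeneous inequality, and the homogeneous version follows by replacing the inhomogeneous decomposition with its homogeneous analog throughout.

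The main obstacle is the bookkeeping of exponents: one needs $1<p<\infty$ for the $L^p$-boundedness of the Littlewood--Paley square function (hence the assumption $2\leq p_1,q_2<\infty$), and $s\geq 0$ is essential so that the dyadic weights $2^{(j-n)s}$ arising in the remainder estimate form a summable geometric progression. Endpoint refinements (e.g.\ replacing $L^\infty$ by BMO) are not needed for the range stated in Lemma~\ref{KatoPonce}, so the classical paraproduct argument sketched above suffices.
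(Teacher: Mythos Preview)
The paper does not prove this lemma; it simply cites \cite{KatoPonce1988} and records the statement for later use. So there is no ``paper's proof'' to compare against, and your paraproduct strategy is indeed the standard route to Kato--Ponce. That said, your sketch contains a genuine technical gap worth flagging.

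In your estimate for $T_f g$ you write
\[
\|T_f g\|_{H^s}^2\lesssim \|f\|_{L^{q_1}}^2\sum_j 2^{2js}\|\Delta_j g\|_{L^{q_2}}^2
\]
and then claim the right side is $\lesssim \|f\|_{L^{q_1}}^2\|g\|_{W^{s,q_2}}^2$ by the ``Littlewood--Paley characterization of $W^{s,q_2}$.'' But the quantity $\bigl(\sum_j 2^{2js}\|\Delta_j g\|_{L^{q_2}}^2\bigr)^{1/2}$ is the \emph{Besov} norm $\|g\|_{B^s_{q_2,2}}$, not the Sobolev (Triebel--Lizorkin) norm $\|g\|_{W^{s,q_2}}=\bigl\|(\sum_j 2^{2js}|\Delta_j g|^2)^{1/2}\bigr\|_{L^{q_2}}$. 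These agree only at $q_2=2$; for $q_2>2$ Minkowski's inequality gives $\|g\|_{W^{s,q_2}}\leq \|g\|_{B^s_{q_2,2}}$, which is the wrong direction for your purpose. The same issue recurs in your treatment of the remainder, where you end up with $(\sum_j 2^{2js}\|\Delta_j f\|_{L^{p_1}}^2)^{1/2}$ and $p_1\geq 2$. The fix is standard but essential: bound $|S_{j-2}f(x)|\lesssim Mf(x)$ pointwise by the Hardy--Littlewood maximal function \emph{before} taking norms, so that the square function in $g$ stays inside the $L^{q_2}$ norm; then H\"older with $\tfrac12=\tfrac{1}{q_1}+\tfrac{1}{q_2}$ and the $L^{q_1}$-boundedness of $M$ give the Triebel--Lizorkin norm directly. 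An analogous pointwise argument (or the Fefferman--Stein vector-valued maximal inequality) handles the remainder.
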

		In particular, we repeatedly use the following multiplicative Sobolev inequality throughout the manuscript.
		\begin{cor}\label{cor product}
			Assume $f,g\in H^2(\Om)$. Then for any constant $\delta\in(0,1)$, we have
			\[
			\|fg\|_1^2\lesssim \delta \|g\|_2^2 +  \|f\|_1^8\|g\|_{0}^2,\q\q \|fg\|_0^2\lesssim \delta \|g\|_1^2 +  \|f\|_1^4\|g\|_{0}^2.
			\]
		\end{cor}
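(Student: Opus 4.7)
The plan is to reduce both estimates to products of mixed Lebesgue norms, apply Sobolev embeddings and Gagliardo--Nirenberg interpolation in three dimensions, and then use Young's inequality so that the highest--derivative norm of $g$ appears linearly and can be absorbed into the small factor $\delta$. Both bounds are standard once the right interpolation is found; the real content is in choosing the exponents.

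For the second inequality I would simply use Hölder to write $\|fg\|_0\le \|f\|_{L^6}\|g\|_{L^3}$, dominate $\|f\|_{L^6}\lesssim \|f\|_1$ by the Sobolev embedding $H^1(\Omega)\hookrightarrow L^6(\Omega)$, and interpolate $\|g\|_{L^3}\lesssim \|g\|_0^{1/2}\|g\|_{L^6}^{1/2}\lesssim \|g\|_0^{1/2}\|g\|_1^{1/2}$. Squaring gives $\|fg\|_0^2 \lesssim \|f\|_1^2\|g\|_0\|g\|_1$, and a single Young's inequality of the form $\|g\|_1\cdot(\|f\|_1^2\|g\|_0)\le \delta\|g\|_1^2 + C_\delta\|f\|_1^4\|g\|_0^2$ closes the bound; this is where the exponent $4$ on $\|f\|_1$ enters.

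For the first inequality I would start from the Leibniz splitting $\|fg\|_1^2 \lesssim \|fg\|_0^2 + \|f\nabla g\|_0^2 + \|g\nabla f\|_0^2$. The first term is handled exactly as above and then upgraded via $\|g\|_1^2\le \|g\|_0\|g\|_2$ to take the shape $\delta\|g\|_2^2 + C_\delta\|f\|_1^{8/3}\|g\|_0^2$, which is dominated by the claimed right--hand side. For the middle term, the natural choice $\|f\nabla g\|_0 \le \|f\|_{L^6}\|\nabla g\|_{L^3}$ together with the Gagliardo--Nirenberg inequality $\|\nabla g\|_{L^3}\lesssim \|g\|_1^{1/2}\|g\|_2^{1/2}$ gives $\|f\nabla g\|_0^2 \lesssim \|f\|_1^2\|g\|_0^{1/2}\|g\|_2^{3/2}$ after one more interpolation of $\|g\|_1$; applying Young's inequality with dual exponents $(4/3,4)$ to the pair $(\|g\|_2^{3/2},\|f\|_1^2\|g\|_0^{1/2})$ produces $\delta\|g\|_2^2 + C_\delta \|f\|_1^8\|g\|_0^2$, which is exactly where the exponent $8$ originates.

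The genuine obstacle is the last piece $\|g\nabla f\|_0$, because only $\|\nabla f\|_{L^2}=\|f\|_1$ is admissible on the right and this forces the $L^\infty$ norm to sit on $g$; in three dimensions $H^1\not\hookrightarrow L^\infty$, so the endpoint must be avoided and a naive bound would contaminate the estimate with $\|f\|_2$. The key ingredient is Agmon's inequality in $\mathbb{R}^3$, $\|g\|_{L^\infty}^2 \lesssim \|g\|_1\|g\|_2$ (which transfers to $\Omega=\mathbb{R}^3_-$ by an extension operator); combined once again with $\|g\|_1 \lesssim \|g\|_0^{1/2}\|g\|_2^{1/2}$ it delivers exactly the same scaling $\|g\nabla f\|_0^2 \lesssim \|f\|_1^2\|g\|_0^{1/2}\|g\|_2^{3/2}$ as the $\|f\nabla g\|_0$ term, and the identical Young pairing closes it. Summing the three contributions gives the corollary.
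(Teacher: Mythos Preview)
Your argument for the second inequality is essentially identical to the paper's: H\"older with $L^6\times L^3$, the embedding $H^1\hookrightarrow L^6$, the interpolation $\|g\|_{L^3}\lesssim\|g\|_0^{1/2}\|g\|_1^{1/2}$, and one Young inequality.

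For the first inequality your route is correct but genuinely different from the paper's. The paper invokes the Kato--Ponce product estimate (their Lemma~\ref{KatoPonce}) with $s=1$, $(p_1,p_2)=(3,6)$, $(q_1,q_2)=(6,3)$ together with $H^{1.5}\hookrightarrow W^{1,3}$ to obtain in one stroke $\|fg\|_1\lesssim\|f\|_1\|g\|_{1.5}$, then interpolates $\|g\|_{1.5}^2\lesssim\|g\|_1\|g\|_2$ and applies Young twice. You instead expand $\|fg\|_1$ by Leibniz and treat $\|f\nabla g\|_0$ and $\|g\nabla f\|_0$ separately; the crucial asymmetric term $\|g\nabla f\|_0$---which forces only $\|f\|_1$ on the right---you close with Agmon's inequality $\|g\|_{L^\infty}^2\lesssim\|g\|_1\|g\|_2$. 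Both paths land on the same product $\|f\|_1^2\|g\|_0^{1/2}\|g\|_2^{3/2}$ before the final Young step with exponents $(4/3,4)$, which is where the power $8$ appears. Your approach is more elementary in that it avoids the Kato--Ponce machinery and makes transparent exactly where the $H^2$ assumption on $g$ (and only $H^1$ on $f$) is spent, namely in Agmon; the paper's approach is shorter but relies on the black-box product estimate. Either way the corollary follows.
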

		\begin{proof}
			Invoking the Kato-Ponce inequality in Lemma \ref{KatoPonce} with $s=1$, $p_1=q_2=3, p_2=q_1=6$, Sobolev embeddings $H^1(\Om)\hookrightarrow L^6(\Om),~H^{1.5}(\Om)\hookrightarrow W^{1,3}(\Om)$ and interpolation inequality, we get
			\[
			\|fg\|_1^2\lesssim \|f\|_1^2\|g\|_{1.5}^2\lesssim \|f\|_1^2\|g\|_{1}\|g\|_2\lesssim \delta \|g\|_2^2 +  \|f\|_1^4\|g\|_{1}^2\leq \delta \|g\|_2^2 +  \|f\|_1^8\|g\|_{0}^2.
			\]
			Using interpolation and Young's inequality again, we get
			\[
			\|f\|_1^4\|g\|_{1}^2\leq \|f\|_1^4\|g\|_0\|g\|_2\lesssim \delta\|g\|_2^2 + \|f\|_1^8\|g\|_0^2
			\]as desired. The second inequality is proved in the same way:
			\[
			\|fg\|_0^2\leq \|f\|_{L^6}^2\|g\|_{L^3}^2\lesssim\|f\|_1^2\|g\|_{\frac12}^2\lesssim\|f\|_1^2\|g\|_1\|g\|_0\lesssim \delta\|g\|_1^2+\|f\|_1^4\|g\|_0^2.
			\]
		\end{proof}
		
		The next lemma records the concrete forms of several commutators repeatedly used in this manuscript.
		\begin{lem}[{\cite[Section 4]{Luo2018CWW}}]\label{commutators}
			We have $[\p_a, D_t]=(\p_a u)\tilde{\cdot}\p$ for $a=t,1,2,3$, where the symmetric dot product $(\p u)\tilde{\cdot} \p$ is defined component-wisely by $(\p_a u)\tilde{\cdot}\p=\p_a u_l \p_l$. For $k\geq 2$, we have
			\begin{align}
				[\p, D_t^k]=&~ (\p D_t^{k-1} u)\tilde{\cdot} \p + k (\p u)\tilde{\cdot}( \p D_t^{k-1} ) \notag\\
				&~+\sum_{l_1+l_2=k-1\atop l_1,l_2 >0 } c_{l_1,l_2} ( \p D_t^{l_1}u )\tilde{\cdot}(\p D_t^{l_2}) + \sum_{l_1+\cdots+l_n=k-n+1 \atop n\ge 3} d_{l_1,\cdots,l_n}(\p D_t^{l_1}u)\cdots(\p D_t^{l_{n-1}}u)(\p D_t^{l_n})
			\end{align} for some $c_{l_1,l_2}, d_{l_1,l\cdots,l_n}\in\Z$ and
			\begin{align}
				[D_t, \p^k]=\sum_{j=0}^{k-1} c_{j,k} (\p^{1+j}u)\tilde{\cdot}\p^{k-j}
			\end{align}for some $c_{j,k}\in\Z$.
			
			For any (sufficiently smooth) vector field $X$, we have
			\begin{align}
				[\lap,X\cdot\nab ](\cdot)=\lap X\cdot\nab(\cdot) +2\sum_{i,j=1}^3 (\p_i X_j)\p_i\p_j(\cdot)
			\end{align} and 
			\begin{align}
				[\lap, D_t](\cdot)=\lap u\cdot\nab(\cdot) +2\sum_{i,j=1}^3 (\p_i u_j)\p_i\p_j(\cdot).
			\end{align}
		\end{lem}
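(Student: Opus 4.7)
The plan is to prove each identity by direct computation, with the higher-order versions obtained by induction on the number of derivatives. For the base case $[\p_a, D_t]$ with $a \in \{t,1,2,3\}$, expanding $D_t = \p_t + u_l\p_l$ and applying the Leibniz rule gives
\begin{align*}
[\p_a, D_t]f = \p_a(u_l\p_l f) - u_l\p_l\p_a f = (\p_a u_l)\p_l f,
\end{align*}
which is exactly $(\p_a u)\tilde{\cdot}\p f$ by the definition of the symmetric dot product; the case $a = t$ is included automatically because $\p_t$ commutes with itself and the remaining contribution $[\p_t, u_l\p_l]f = (\p_t u_l)\p_l f$ has the claimed form.

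For $[\p, D_t^k]$ with $k \ge 2$, I would induct on $k$ via the splitting
\begin{align*}
[\p, D_t^k] = [\p, D_t]\, D_t^{k-1} + D_t\,[\p, D_t^{k-1}].
\end{align*}
The first summand contributes $(\p u)\tilde{\cdot}\p\,D_t^{k-1}$. Substituting the inductive hypothesis into the second summand and then commuting every remaining $D_t$ past every $\p$ via the base case releases at each step one additional $u$-factor, producing (i) a leading term $(\p D_t^{k-1}u)\tilde{\cdot}\p$ obtained when all $k-1$ interior copies of $D_t$ cascade onto a single $u$, (ii) a term $k(\p u)\tilde{\cdot}\p D_t^{k-1}$ gathering the $k$ ways of releasing exactly one outside $u$-factor, and (iii) all mixed products $(\p D_t^{l_1}u)\cdots(\p D_t^{l_{n-1}}u)(\p D_t^{l_n})$ accounting for the remaining distributions of $D_t$'s among several released $u$-factors, constrained by $l_1+\cdots+l_n = k-n+1$. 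The integer coefficients $c_{l_1,l_2}$ and $d_{l_1,\ldots,l_n}$ are just the multiplicities of these distributions; since only the structural form of the commutator is ever used in the paper, I would not attempt to pin down their explicit values. The dual identity for $[D_t,\p^k]$ follows by the mirror induction $[D_t,\p^k] = \p\,[D_t,\p^{k-1}] + [D_t,\p]\,\p^{k-1}$ starting from $[D_t,\p_i] = -(\p_i u_l)\p_l$; since only a single $D_t$ is present, at most one $u$-factor (carrying $1+j$ spatial derivatives) can be released, matching the single-sum form $\sum_j c_{j,k}(\p^{1+j}u)\tilde{\cdot}\p^{k-j}$.

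For the Laplacian commutators, a pointwise Leibniz computation yields, for each coordinate $i$,
\begin{align*}
[\p_i^2, X_j\p_j] = \p_i\bigl((\p_i X_j)\p_j\bigr) + (\p_i X_j)\p_j\p_i = (\p_i^2 X_j)\p_j + 2(\p_i X_j)\p_i\p_j,
\end{align*}
and summing over $i$ produces $[\lap, X\cdot\nab] = \lap X\cdot\nab + 2\sum_{i,j}(\p_i X_j)\p_i\p_j$. Because $[\lap, \p_t] = 0$, the final identity $[\lap, D_t] = [\lap, u\cdot\nab]$ is just the specialization $X = u$ of the previous one. The only genuine difficulty is keeping the multi-index bookkeeping straight in the $[\p, D_t^k]$ expansion; however, the paper uses these commutators purely at the structural level (counting how many time and spatial derivatives can sit on each factor in order to detect loss of derivative or loss of Mach number weight), so the precise integer coefficients are immaterial for every application in the main text.
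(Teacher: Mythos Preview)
Your proposal is correct. The paper does not actually prove this lemma: it is stated in the appendix and attributed to \cite[Section 4]{Luo2018CWW} without any argument given. Your direct-computation-plus-induction approach is exactly the standard way to derive these commutator identities, and your remark that only the structural form (number and placement of derivatives on each factor) matters for the applications in the main text is accurate.
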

		
		\end{appendix}

	\end{document}